\documentclass{amsart}
\usepackage{amssymb,amsfonts,array}
\usepackage{amsmath}
\usepackage[utf8]{inputenc}
\usepackage{mathtools}
\usepackage{tabulary}
\usepackage[all,arc]{xy}
\usepackage{enumerate}
\usepackage{easyReview}
\usepackage{mathrsfs}
\usepackage{xcolor}

\usepackage[hidelinks]{hyperref}

\usepackage{bm}
\usepackage{dsfont}
\usepackage{pgfplots}
\usepackage{tikz}
\usepackage{multirow}
\usepackage{tikz-cd}
\usetikzlibrary{cd,shapes,arrows,positioning,calc}
\usepackage{tikz}
\usetikzlibrary{intersections,through,backgrounds}
\usepackage{tkz-base}
\usepackage{tkz-euclide}
\usepackage{geometry}
\usepackage[OT2,T1]{fontenc}
\DeclareSymbolFont{cyrletters}{OT2}{wncyr}{m}{n}
\DeclareMathSymbol{\Sha}{\mathalpha}{cyrletters}{"58}

\newtheorem{thm}{Theorem}[section]
\newtheorem{cor}[thm]{Corollary}
\newtheorem{con}[thm]{Conjecture}
\newtheorem{prop}[thm]{Proposition}
\newtheorem{lemma}[thm]{Lemma}

\theoremstyle{definition}
\newtheorem{defn}[thm]{Definition}

\newtheorem{rmk}[thm]{Remark}
\newtheorem{assump}[thm]{Assumption}

\numberwithin{equation}{section}

\newcommand{\floor}[1]{\lfloor#1\rfloor}

\newcommand{\FF}{\mathbb{F}}
\newcommand{\DD}{\mathbb{D}}
\newcommand{\LL}{\mathbb{L}}
\newcommand{\XX}{\mathbb{X}}

\newcommand{\GG}{\mathbb{G}}
\newcommand{\BB}{\mathbb{B}}
\newcommand{\QQ}{\mathbb{Q}}
\newcommand{\ZZ}{\mathbb{Z}}

\newcommand{\cO}{\mathcal{O}}
\newcommand{\cU}{\mathcal{U}}

\newcommand{\cH}{\mathcal{H}}

\newcommand{\cF}{\mathcal{F}}

\newcommand{\bC}{\mathbf{C}}
\newcommand{\bM}{\mathbf{M}}

\newcommand{\bH}{\mathbf{H}}

\newcommand{\Alg}{\mathrm{Alg}}
\newcommand{\Ch}{\mathrm{Ch}}
\newcommand{\can}{\mathrm{can}}
\newcommand{\Tr}{\mathrm{Tr}}

\newcommand{\Til}{\mathrm{Til}}
\newcommand{\Av}{\mathrm{Av}}
\newcommand{\Rep}{\mathrm{Rep}}

\newcommand{\coh}{\mathrm{coh}}
\newcommand{\End}{\mathrm{End}}

\newcommand{\Mod}{\mathrm{Mod}}

\newcommand{\cind}{{\mathrm{c}\mbox{-}\mathrm{Ind}}}
\newcommand{\cInd}{{\mathrm{c}\mbox{-}\mathrm{Ind}}}
\newcommand{\Ind}{{\mathrm{Ind}}}

\newcommand{\Isoc}{\mathrm{Isoc}}
\newcommand{\Shv}{\mathrm{Shv}}
\newcommand{\fg}{\mathrm{f.g.}}
\newcommand{\GS}{\mathrm{GS}}
\newcommand{\Iw}{\mathrm{Iw}}
\newcommand{\tame}{\mathrm{tame}}
\newcommand{\unip}{\mathrm{unip}}
\newcommand{\op}{\mathrm{op}}
\newcommand{\Perf}{{\mathrm{Perf}}}
\newcommand{\QCoh}{\mathrm{QCoh}}
\newcommand{\mon}{\mathrm{mon}}

\newcommand{\cl}{\mathrm{cl}}
\newcommand{\Morita}{\mathrm{Morita}}
\newcommand{\PR}{\mathrm{Pr}}
\newcommand{\pr}{\mathrm{pr}}
\newcommand{\CohSpr}{\mathrm{CohSpr}}
\newcommand{\Hom}{\mathrm{Hom}}

\newcommand{\LS}{\mathrm{LS}}
\newcommand{\cZ}{\mathcal{Z}}

\newcommand{\GL}{\mathrm{GL}}
\newcommand{\git}{{/\!\!/}}
\newcommand{\Coh}{\mathrm{Coh}}
\newcommand{\Ad}{\mathrm{Ad}}
\newcommand{\Comm}{\mathrm{Comm}}
\newcommand{\lie}{\operatorname{Lie}}
\newcommand{\ghat}{\hat{G}}
\newcommand{\that}{\hat{T}}

\newcommand{\fq}{\mathbb{F}_q}

\newcommand{\flbar}{\overline{\mathbb{F}}_{\ell}}

\newcommand{\rep}{\operatorname{Rep}}

\newcommand{\Spec}{\operatorname{Spec}}

\title{On the normality of the commuting scheme}
\author[Jack Sempliner]{Jack Sempliner}
\address{Mathematical Sciences Building, 520 Portola Plaza, Los Angeles, CA 90095, United States of America}
\email{jsempliner@math.ucla.edu}
\author[Xiangqian Yang]{Xiangqian Yang}
\address{Beijing International Center for Mathematical Research, Peking University, Beijing 100871, China}
\email{yangxq@pku.edu.cn}

\begin{document}
\begin{abstract}
    If $G$ is a connected reductive algebraic group over a field $k$ of characteristic $\ell$, we show that both the Lie-theoretic and group-theoretic commuting schemes associated to $G$ are normal and Cohen--Macaulay when $\ell = 0$, or $\ell$ is larger than the Coxeter numbers of all simple factors of $G$, and that $\ell \neq 19$ (resp. $\ell \neq 31$) when $G$ has a simple factor of type $E_7$ (resp. $E_8$). The main new ingredients are provided by studying the unipotent categorical Langlands functor of Zhu. Along the way we prove a $t$-exactness result for a finite unipotent categorical Langlands functor which may be of independent interest.
\end{abstract}

\maketitle

\tableofcontents

\section{Introduction}

Let $k$ be a field and let $\mathfrak{g}/k$ be a reductive Lie algebra, and let $[-, -]: \mathfrak{g}\times \mathfrak{g} \to \mathfrak{g}$ denote the commutator map. The commuting scheme $\Comm_{\mathfrak{g}}$ is defined to be the scheme-theoretic fiber of this map above $0 \in \mathfrak{g}$. In 1955 in the work of Motzkin--Taussky \cite{MotzkinTaussky1955}, it was shown that when $\mathfrak{g} = \mathfrak{gl}_n$ the scheme $\Comm_{\mathfrak{g}}$ is irreducible. In 1979, Richardson \cite{Richardson1979} showed that $\Comm_{\mathfrak{g}}$ is always irreducible over a field $k$ of characteristic zero. This result was generalized by work of Levy \cite{Levy} to fields $k$ such that $\ell = \mathrm{char}(k)$ is \emph{good} for $\mathfrak{g}$ under minor additional technical hypotheses. It remains quite a natural question whether the obvious equations suffice to define the commuting variety $\Comm_{\mathfrak{g}, \mathrm{red}}$, i.e. whether the scheme $\Comm_{\mathfrak{g}}$ is reduced. Indeed, the following folklore conjecture seems to have been proposed in 1982 by M. Artin and M. Hochster in the case $\mathfrak{g} = \mathfrak{gl}_n$. We have taken the liberty of stating an extension of this conjecture to any $\mathfrak{g}$. 

\begin{con}\label{con: main conjecture}
    If the prime $\ell = \mathrm{char}(k)$ is pretty good\footnote{By this we mean that there exists a reductive algebraic group $G/k$ such that $\lie(G) = \mathfrak{g}$ and $\ell$ is pretty good for $G$. We refer to \cite[Definition 2.11]{Herpel2013} for the definition of "pretty good". We note that "pretty good" is stronger than "good" but weaker than "very good".} for $\mathfrak{g}$, then the scheme $\Comm_{\mathfrak{g}}$ is normal and Cohen--Macaulay.
\end{con}

In the case that $\mathfrak{g} = \mathfrak{gl}_n$ this conjecture was resolved positively in the case $n = 3$ in \cite{Thompson1985} and for $n = 4$ in \cite{Hreinsdottir1994}. Work of Ginzburg in \cite{Ginzburg2012} showed that in characteristic zero the normalization $\widetilde{\Comm}_{\mathfrak{g}}$ is reduced and Cohen--Macaulay for any reductive $\mathfrak{g}$. Work of Li--Nadler--Yun in \cite{LNY24} demonstrated that the ring of $G$-invariant functions $\cO(\Comm_G)^{G}$ is reduced in characteristic zero. Nonetheless, to the best of the authors' knowledge, this conjecture remains unresolved even when $k$ has characteristic zero and $\mathfrak{g} = \mathfrak{gl}_n$ for $n > 4$. 

Given a connected reductive algebraic group $G/k$, one may define analogously the \emph{group-theoretic} commuting scheme
\[
    \Comm_G=\{(x,y) \in G \times G\,|\,xyx^{-1}=y\}.
\]
The main result of this paper is the following theorem, which in particular resolves much of Conjecture \ref{con: main conjecture}. 

\begin{thm}[{Corollaries~\ref{cor: char 0}, \ref{cor: char 0 Lie}}]\label{thm: intro main theorem}
    Let $k$ be a field of characteristic $\ell$ and let $G/k$ be a connected reductive algebraic group, and let $\mathfrak{g}$ denote its Lie algebra. If $\ell \neq 0$, suppose that $\ell$ is larger than the Coxeter numbers of the simple factors of $G$, and that $\ell \neq 19$ (resp. $\ell \neq 31$) when $G$ has a simple factor of type $E_7$ (resp. $E_8$). Then the schemes $\Comm_G, \Comm_{\mathfrak{g}}$ are normal and Cohen--Macaulay. 
\end{thm}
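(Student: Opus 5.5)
The plan is to derive both statements from properties of the coherent Springer sheaf and Zhu's unipotent categorical Langlands functor. The key observation is that $\Comm_G$ is closely related to the derived stack $\LS_{\ghat}^{\unip}$ of unipotent (tame) Langlands parameters. Here we take $\ghat$ to be the group $G$ itself, with its dual playing the automorphic role. Concretely, the stack of pairs $(\sigma,N)$ with $\sigma N \sigma^{-1}=N^q$ maps to a stack whose ring of functions, after passing to the appropriate completion or degeneration $q\to 1$, is the ring of functions on the derived commuting stack $\Comm_G/G$. Normality and Cohen--Macaulayness of $\Comm_G$ are then statements about the underlying classical scheme of a derived scheme.

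\emph{Step 1 (reduction to derived statements).} $\Comm_G$ is cut out by $\dim G$ equations in $G\times G$. Over the regular semisimple locus it is smooth of dimension $\dim G+\operatorname{rk} G$, and its dimension is $\dim G+\operatorname{rk}G$ by Richardson/Levy irreducibility, valid for $\ell$ good. So it is \emph{not} a complete intersection; the derived fiber $\mathcal{C}=G\times^{\mathbb{L}}_{G}\{1\}$ has non-trivial $\pi_1$ in general. I would therefore compare $\cO(\Comm_G)=\pi_0\cO(\mathcal{C})$ with $\Gamma(\mathcal{C},\cO)$ and aim to show that the global sections of the structure sheaf of the classical stack have the following properties: they are concentrated in degree $0$, they are a maximal Cohen--Macaulay module over a regular ambient ring, and they agree with functions on the normalization. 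Ginzburg's result identifies the normalization with $\widetilde{\Comm}_{\mathfrak g}$, which is Cohen--Macaulay. It then suffices to show that the natural map $\cO(\Comm_G)\to\cO(\widetilde{\Comm}_G)$ is surjective, hence an isomorphism since $\Comm_G$ is generically reduced.

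\emph{Step 2 (Langlands input).} Under Zhu's functor $\LL^{\unip}:\Shv(\Iw\backslash LG/\Iw)\to \Coh(\LS^{\unip}_{\ghat})$, the Iwahori–Whittaker or the "big tilting/antispherical" object maps to the structure sheaf, and the unit maps to the coherent Springer sheaf. I would prove the paper's finite $t$-exactness result. On the finite Hecke category $\Shv(B\backslash G/B)$, the restricted functor to $\Coh(\widetilde{\mathcal N}/\ghat\times_{\ghat}\widetilde{\mathcal N}/\ghat)$ (Bezrukavnikov's equivalence) sends perverse sheaves, in particular tilting objects, to objects in the heart. Combined with the computation of $\End$ of the big tilting object, namely the coinvariant algebra, this shows that $\Gamma(\LS^{\unip},\cO)$ is computed by the endomorphisms of a perverse object. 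Those endomorphisms are classical and concentrated in degree $0$, and they are given by the center of the affine Hecke category. That center is identified (Bezrukavnikov) with functions on the normalization-type object via Springer theory. This yields the surjectivity needed in Step 1. The hypothesis that $\ell$ exceeds the Coxeter number (and $\ell\neq 19,31$ for $E_7,E_8$) enters here: it guarantees that the relevant Springer/Steinberg arguments and the torsion-freeness of $H^*(G/B)$ over $\ZZ_{(\ell)}$ hold, and that the tilting/parity objects behave as in characteristic $0$. I expect this to be the main obstacle: establishing $t$-exactness in positive characteristic, and ensuring that no higher Tor survives, which is exactly what kills the derived structure.

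\emph{Step 3 (Lie algebra).} For $\Comm_{\mathfrak g}$, I would deduce the statement from the group case by a $G$-equivariant étale-local comparison. For $\ell$ larger than the Coxeter number a Springer isomorphism $\mathcal U\cong\mathcal N$ exists, and more generally an exponential-type map is étale near the identity. This identifies a neighborhood of $(1,1)$ in $\Comm_G$ with a neighborhood of $(0,0)$ in $\Comm_{\mathfrak g}$. The $\mathbb G_m^2$-scaling action on $\Comm_{\mathfrak g}$ contracts every point to $(0,0)$. Since normality and the Cohen--Macaulay property are open, it follows that $\Comm_{\mathfrak g}$ is normal and Cohen--Macaulay everywhere.
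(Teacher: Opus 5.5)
You have several of the right ingredients: the coherent Springer sheaf, the Gelfand--Graev/big tilting object going to the structure sheaf, and a degeneration $q\to 1$. Steps 1--2, however, do not give a proof. What you propose to extract from the Langlands side is $\Gamma(\LS^{\unip}_{\ghat},\cO)$ (endomorphisms of a perverse object, a center). This only sees the $\ghat$-invariant functions on the ($q$-)commuting scheme. Cohen--Macaulayness or normality of $\Comm_{\ghat}$, and surjectivity of $\cO(\Comm_{\ghat})\to\cO(\widetilde{\Comm}_{\ghat})$, are statements about all isotypic components of $\cO(\Comm_{\ghat})$, so they cannot be detected on invariants.

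The paper instead works with the coherent sheaf $\cO_{\LS^{\widehat{\unip}}_{\ghat}}$ on the stack. It is a direct summand of $\cF^{0,c}_{G,\psi}(\Gamma^{\widehat{\unip}}_\psi)\otimes_{\cO(\hat C)}\cO(\hat T)$, where $\cF^{0,c}_{G,\psi}$ is $\cO(\hat C^{[q]}_u)$-linear, $t$-exact, and sends the heart to maximal Cohen--Macaulay sheaves. That $t$-exactness does not come from Bezrukavnikov's equivalence, which concerns the affine Hecke category and matches perverse sheaves with an exotic heart, not the standard one. It is proved after the categorical trace:
\begin{enumerate}
\item show $R\Hom(\widetilde V\otimes\CohSpr^{\widehat{\unip}}_{\ghat},\cO)$ is concentrated in degree $0$ for every tilting $V$, using central sheaves and the Whittaker pairing;
\item pass to good-filtration connectivity via tilting resolutions of Weyl modules;
\item conclude with Grothendieck--Serre self-duality of $\CohSpr$ and semisimplicity of $R_1$.
\end{enumerate}

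These facts supply the two inputs of \cite[Tag 045J]{stacks-project} that your plan never addresses. The first is flatness of $\pi\colon\Comm_{\ghat}\to\hat C$ at $u$. It comes from the precise comparison $[q]=\mathrm{Id}$ on $\ghat\times_{\hat C}\hat C_a$ whenever $Na\le\ell^{v_\ell(q-1)}$, together with flatness of $\Gamma^{\widehat{\unip}}_\psi\otimes_{\cO(\hat C^{[q]}_u)}\cO(\hat C_a)$ over $\cO(\hat C_a)$, checked through $\Hom(\widetilde R_1,-)$. The second is Cohen--Macaulayness of the unipotent fiber $\Comm^u_{\ghat}$. A Jordan-decomposition reduction to unipotent pairs in centralizers then handles arbitrary points.

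Normality and characteristic $0$ are also not covered. Ginzburg's theorem concerns the normalization of $\Comm_{\mathfrak g}$ in characteristic $0$. It says nothing about $\Comm_{\ghat}$ over $\overline{\FF}_\ell$, which is where the $q\to1$ comparison with $\LS^{\widehat{\unip}}_{\ghat}$ lives. Moreover, surjectivity onto the normalization would only show that $\Comm_{\ghat,\mathrm{red}}$ is normal: injectivity needs $(S_1)$, and generic reducedness is not enough.

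The paper proves normality by Serre's criterion. $(S_2)$ comes from Cohen--Macaulayness. For $(R_1)$, flatness of $\pi$ shows that a height-one point lies over a generic point of the discriminant; there one entry of the pair is regular, and the regular locus is smooth. Characteristic $0$ is then deduced by spreading out over $\mathcal{O}_F[S^{-1}]$ and applying the characteristic-$\ell$ result at all closed points.

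Your Step 3 is essentially the paper's argument and is fine: a quasi-logarithm identifies the completions at $(1,1)$ and $(0,0)$, and the contracting $\GG_m$-action plus openness of the relevant loci finishes it.
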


Perhaps surprisingly, the proof of Theorem \ref{thm: intro main theorem} does not follow from a detailed analysis of the particular geometry of the schemes in question. Instead, the main input arises from an ongoing program of the authors to understand the tame categorical Langlands correspondence of Zhu \cite{ZTame} in non-banal characteristics.

From now on we denote by $\hat{G}$ the connected reductive group in Theorem \ref{thm: intro main theorem}, as it will appear on the spectral side of the categorical Langlands correspondence. We briefly explain the proof of Cohen--Macaulayness of $\Comm_{\hat{G}}$ in Theorem \ref{thm: intro main theorem}. We assume that $\hat{G}$ has simply connected derived subgroup for simplicity. We can easily reduce to the case $k=\overline{\FF}_\ell$. Let $\hat{C}=\hat{G}\git\hat{G}$ denote the Chevalley quotient. We have a natural map
\[\pi\colon \Comm_{\hat{G}}\to \hat{C}\]
by sending a commuting pair $(x,y)$ to the image of $y$. Let $u\in\hat{C}$ denote the image of the identity. By \cite[Tag 045J]{stacks-project}, we need to show that the map $\pi$
is flat and the geometric fibers of $\pi$ are Cohen--Macaulay. By a standard reduction, we are reduced to showing that the map $\pi$ is flat at $u$ and the fiber at $u$ is Cohen--Macaulay.
Now we use the fact that locally around $u$, the commuting scheme $\Comm_{\hat{G}}$ over $\overline{\FF}_\ell$ can be approximated by the $q$-commuting scheme
\[
    \Comm^q_{\hat{G}}=\{x,y\in\hat{G} \,|\,xyx^{-1}=y^q\}
\]
as $v_\ell(q-1)$ goes to infinity. More precisely, we have a similar map
\[\pi\colon\Comm^q_{\hat{G}}\to \hat{C},\]
sending $(x,y)$ to the image of $y$. Then there exists a constant $a>0$ depending only on $v_\ell(q-1)$ and $\hat{G}$, such that there is a natural isomorphism
\[
    \Comm_{\hat{G}}\times_{\hat{C}}\hat{C}_a\simeq \Comm^q_{\hat{G}}\times_{\hat{C}}\hat{C}_a,
\]
where $\hat{C}_a$ is the $a$-th nilpotent thickening of $\hat{C}$ at the point $u$. See Corollary \ref{cor:Comm q vs Comm}. Therefore we are reduced to showing that the map
\begin{equation}\label{eq: intro Comm^q C_a}
    \Comm^q_{\hat{G}}\times_{\hat{C}}\hat{C}_a\to \hat{C}_a
\end{equation}
is flat with a Cohen--Macaulay central fiber. Using the unipotent categorical local Langlands established in \cite{ZTame}, this problem can be translated to a problem concerning representations of finite groups of Lie type, as we will now explain.

Let $F=\FF_q(\!(t)\!)$ be a non-Archimedean local field with residue field $\FF_q$. Let $G/\FF_q$ be the split reductive group with dual group $\hat{G}$. We fix a pinning of $G$ and a non-trivial character $\psi\colon \FF_q\to\overline{\FF}_\ell^\times$. In \cite{ZTame}, Zhu established the unipotent categorical Langlands correspondence for $G$. In particular if $\ell$ is sufficiently general, he constructs a fully faithful embedding 
\[
    \LL^{\widehat{\mathrm{unip}}}_{G, \psi, 1}\colon \rep^{\widehat{\mathrm{unip}}}(G(F), \overline{\FF}_\ell) \xhookrightarrow{\quad} \operatorname{IndCoh}(\LS^{\widehat{\mathrm{unip}}}_{\ghat})
\]
of the category $\rep^{\widehat{\unip}}(G(F), \overline{\FF}_\ell)$ of unipotent representations of $G(F)$ into the category of ind-coherent sheaves on the moduli stack $\LS^{\widehat{\mathrm{unip}}}_{\ghat}$ of $\flbar$-coefficient unipotent Langlands parameters valued in the Langlands dual group $\ghat$ of $G$. 

Now let $\rep^{\widehat\unip}_c(G(\FF_q), \flbar)$ denote the subcategory of unipotent representations of $G(\FF_q)$ such that the underlying $\overline{\FF}_\ell$-complex is perfect. In the bulk of this paper we study a finite Langlands functor
\[
    \mathcal{F}_{G,\psi}^{\unip,c}\colon\rep^{\widehat\unip}_c(G(\FF_q), \flbar) \xrightarrow{\quad} \operatorname{Coh}(\LS^{\widehat{\mathrm{unip}}}_{\ghat})
\]
which is a variant of the functor $\mathcal{F}_{G,\psi}^{\widehat{\mathrm{unip}}} \coloneqq \LL_{G,\psi,1}^{\widehat{\mathrm{unip}}} \circ \cind_{G(\cO_F)}^{G(F)}$. Our main result on this functor is the following theorem, which we expect will be of independent interest. This is a generalization of \cite[Theorem 5.10]{ZTame} to modular coefficients.

\begin{thm}[{Theorem \ref{thm: t-exact CM}}]\label{thm: intro t-exact CM}
    Suppose that Assumption \ref{assump: local geom Langlands} holds, and that every elementary $\ell$-subgroup of $G(\FF_q)$ is contained within a torus. Then 
    \begin{enumerate}
        \item The functor 
        \[
            \cF_{G,\psi}^{\unip,c}\colon \Rep^{\widehat\unip}_c(G(\FF_q),\overline{\FF}_\ell)\to \Coh(\LS^{\widehat\unip}_{\hat{G}})
        \]
        is $t$-exact when both sides are endowed with the standard $t$-structures. 
        \item For any object $V\in \Rep_c^{\widehat\unip}(G(\FF_q),\overline{\FF}_\ell)^\heartsuit$, the associated coherent sheaf $\cF_{G,\psi}^{\unip,c}(V)$ is maximal Cohen--Macaulay.
    \end{enumerate}  
\end{thm}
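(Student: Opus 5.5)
The plan follows Zhu's characteristic-zero argument \cite[Theorem 5.10]{ZTame}, replacing its semisimplicity inputs with Brauer-theoretic arguments. The key fact is that the source category $\Rep_c^{\widehat\unip}(G(\FF_q),\overline{\FF}_\ell)$ is generated, under finite colimits and retracts, by compact inductions from $\ell$-elementary subgroups. By the hypothesis, these subgroups may be taken inside tori $T'(\FF_q)$, and more usefully by Deligne--Lusztig type objects $R_{T'}^{G}(\theta)$ attached to maximal tori $T'\subset G$ together with $\ell$-power order characters $\theta$. I would first compute $\cF^{\unip,c}_{G,\psi}$ on these generators using Assumption \ref{assump: local geom Langlands}, i.e.\ compatibility of the local geometric Langlands functor with parabolic and Deligne--Lusztig induction. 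For a torus of type $w$, the image should be the pushforward of the structure sheaf of the substack of $\LS^{\widehat\unip}_{\hat G}$ parametrizing parameters $(x,y)$ with $xyx^{-1}=y^q$ and $y$ lying in a torus normalized by $x$ with $x$ acting through $w$. Call this the ``$w$-Springer-type'' stack. It should be a quotient of a finite flat (hence Cohen--Macaulay) scheme of the correct dimension $\dim \LS^{\widehat\unip}_{\hat G}=0$ (as a stack). Its pushforward is then a maximal Cohen--Macaulay sheaf in degree $0$. This gives (1) and (2) on a set of projective-like generators.

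Next I would upgrade this to all of the heart. I would characterize $\Coh(\LS^{\widehat\unip}_{\hat G})^{\le 0}$ and $^{\ge 0}$, and maximal Cohen--Macaulayness, by vanishing of $\Hom$ into and out of the images of the generators, using Grothendieck--Serre duality on $\LS^{\widehat\unip}_{\hat G}$. This stack is quasi-smooth and Gorenstein, being a complete intersection $\{xyx^{-1}y^{-q}=1\}$ in $\hat G\times\hat G$ modulo $\hat G$, of expected dimension. A coherent sheaf $\mathcal{M}$ on a Cohen--Macaulay stack is maximal Cohen--Macaulay exactly when both $\mathcal{M}$ and its dual $\mathbb{D}\mathcal{M}$ lie in the heart. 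So it suffices to show two things. First, $\cF^{\unip,c}_{G,\psi}$ intertwines Grothendieck--Serre duality with the contragredient duality $V\mapsto V^\vee$ on finite group representations, up to a twist by the Chevalley involution and a shift that should be zero. Second, $\cF^{\unip,c}_{G,\psi}$ is right $t$-exact. Left exactness then follows by duality, and (2) follows from (1) applied to $V$ and $V^\vee$. For right $t$-exactness, I would note that every $V$ in the heart is a quotient of a sum of projectives $\cind$ from a Sylow-type subgroup. By the elementary-subgroup hypothesis and Brauer induction, these projectives are summands of the generators above. The functor then sends a projective resolution to a complex of sheaves that is concentrated in degree $0$ termwise, and hence lies in $\Coh^{\le 0}$.

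The main obstacle is the duality compatibility together with the genuine $t$-exactness of the generators in modular characteristic. In characteristic zero the category is semisimple and everything reduces to the irreducible objects. Here one must show that the Deligne--Lusztig-type stacks are indeed flat over the relevant base, with no higher derived structure. This is precisely where the hypotheses enter: large $\ell$ relative to the Coxeter number, and the exclusions $19$ and $31$ for $E_7$ and $E_8$, which I expect to govern when centralizers of $\ell$-elementary subgroups behave torally. I would attack it by a local computation near a semisimple $y$. There, $\LS^{\widehat\unip}_{\hat G}$ reduces to the analogous stack for the centralizer $Z_{\hat G}(y)$, and one can induct on semisimple rank, with the regular-semisimple locus handled directly.
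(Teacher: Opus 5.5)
\textbf{There is a genuine gap: the step that carries all the content, putting the image of a projective generator in the heart, is missing or wrong.}

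Your outer reduction is sound and matches the paper. Once $\cF^{\unip,c}_{G,\psi}(P)\in\Coh(\LS^{\widehat\unip}_{\hat G})^\heartsuit$ for a projective generator $P$, right $t$-exactness follows from resolutions. Since these resolutions are infinite, you need the uniform amplitude bound coming from finitely many simples; this is Lemma \ref{lem: ft representations are nice}. Left $t$-exactness and maximal Cohen--Macaulayness then follow from $\DD'_\GS\circ\cF^{\unip,c}_{G,\psi}\simeq\cF^{\unip,c}_{G,\psi^{-1}}\circ(-)^\vee$, which is an input from \cite{ZTame}, not an obstacle.

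Your candidate generators fail.
\begin{itemize}
\item Inductions of the trivial representation from subgroups of order divisible by $\ell$ (elementary abelian or Sylow $\ell$-subgroups) are not projective. Brauer induction is an identity of virtual characters, so it does not produce projectives as direct summands of anything.
\item The Deligne--Lusztig objects $R^G_{T'}(\theta)$ are complexes with cohomology spread over $[\ell(w),2\ell(w)]$; note that an $\ell$-power order $\theta$ is trivial over $\overline{\FF}_\ell$. Since $\cF^{\unip,c}_{G,\psi}$ is conservative ($\cInd$ is conservative and $\LL^{\unip,\fg}_{G,\psi,1}$ is fully faithful), the theorem itself forces their images \emph{out} of the heart.
\item Your geometric guess is also off. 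Maps such as $\LS^{\widehat\unip}_{\hat B}\to\LS^{\widehat\unip}_{\hat G}$ are proper with Springer-type fibres, not finite, and their sources are derived quasi-smooth stacks.
\item That such a pushforward is concentrated in degree $0$ is exactly Zhu's conjecture on the coherent Springer sheaf. The paper obtains this (Proposition \ref{prop: CohSpr heart CM}) only as an output of the representation-theoretic argument, so the proposed induction on semisimple rank is not a substitute.
\end{itemize}
You have also misplaced the hypotheses. The elementary $\ell$-subgroup condition is used only to identify $\Rep^{\unip}_c$ with $\Rep^{\widehat\unip}_c$ via \cite{BDR17}, so that the functor is defined on the whole category. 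The Coxeter-number and $19$/$31$ conditions enter only through \cite{BR-modular-two-affine-Hecke}, which supplies Assumption \ref{assump: local geom Langlands}.

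The missing idea is the Whittaker/tilting Ext-vanishing mechanism.
\begin{enumerate}
\item Take $P=\bigoplus_w\widetilde{R}^T_{\dot w,\hat u}$ with $\widetilde{R}^T_{\dot w,\hat u}=\Ch^{u-\mon}_{G,\phi}(\widetilde{\Til}^\mon_{\dot w,\hat u})$. This is a projective generator of the heart and is self-dual. Hence it is also an injective cogenerator, and $\DD'_\GS\cF_{G,\psi}(\widetilde{R}^T_{\dot w,\hat u})\simeq\cF_{G,\psi^{-1}}(\widetilde{R}^T_{\dot w,\hat u})$.
\item Combine $\LL^{\tame}_{G,\psi,1}(\cInd_{G(O_F)}^{G(F)}\Gamma_\psi)\simeq\cO_{\LS^{\tame}_{\hat G}}$ with Assumption \ref{assump: local geom Langlands}(c), that $\cZ(V)\star^u\widetilde{\Delta}^{\mon,\psi}_{w_0}$ is cofree tilting, in \cite[Theorem 4.136]{ZTame}. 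This shows that $R\Hom(\widetilde{V}\otimes\cF(\widetilde{R}^T_{\dot w,\hat u}),\cO)$ is concentrated in degree $0$ for every tilting $\hat G$-module $V$. By duality this is equivalent to $R\Hom(c^*\widetilde{V},\cF_{G,\psi^{-1}}(\widetilde{R}^T_{\dot w,\hat u}))$ being concentrated in degree $0$.
\item Tilting resolutions of Weyl modules then place $\Psi\cF(\widetilde{R}^T_{\dot w,\hat u})$ in the connective part of the good filtration $t$-structure, hence of the standard one.
\item Lemma \ref{lemma: Coh->IndPerf t-exact} transfers this connectivity to $\Coh$, and duality gives the other bound.
\end{enumerate}
Nothing in your proposal replaces this.
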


\begin{rmk}
    The assertions of Assumption \ref{assump: local geom Langlands} are expected statements in tame local geometric Langlands. See the discussion below Assumption \ref{assump: local geom Langlands} for a more detailed discussion about the status of these assumptions. In fact we prove, with some additional effort, in Theorem \ref{thm: exact CM large ell} a weaker but unconditional version of Theorem \ref{thm: intro t-exact CM} which suffices for applications to Theorem \ref{thm: intro main theorem}. We also remark that under Assumption \ref{assump: local geom Langlands}, the assumption on $\ell$ in Theorem \ref{thm: intro main theorem} can be relaxed to $\ell\nmid |W|$, where $W$ is the Weyl group of $G$. 
\end{rmk}

Now the stack $\LS^{\widehat\unip}_{\hat{G}}$ is equal to the unipotent connected component in the stacky quotient $\Comm_{\hat{G}}^q/\hat{G}$ of the $q$-commuting scheme. Therefore the fiber of \eqref{eq: intro Comm^q C_a} is Cohen--Macaulay once we find the correct object in $\Rep_c^{\widehat\unip}(G(\FF_q),\overline{\FF}_\ell)^\heartsuit$ corresponding to the structure sheaf on $\LS^{\widehat\unip}_{\ghat}\times_{\hat{C}}\{u\}$, and the flatness of \eqref{eq: intro Comm^q C_a} follows from the flatness of certain $G(\FF_q)$-representations over $\cO(\hat{C}_a)$. See Corollary \ref{cor: Comm^q C_a flat} and Proposition \ref{prop:unipotent LS is CM} for details.

\subsection{Notation and conventions}

We use the language of $\infty$-categories. In particular, derived categories should be understood as derived stable $\infty$-categories. All the functors are derived, except in the following cases:
\begin{enumerate}
    \item If $M,N$ are objects in an abelian category $\mathcal{A}$, we denote by $\Hom_\mathcal{A}(M,N)$ the usual mapping abelian group. However, if $M,N$ are objects in a stable $\infty$-category $\bC$, we will write $R\Hom_\bC(M,N)$ for the mapping complex or spectrum.
    \item If $M,N$ are two modules over a ring $R$, we denote by $M\otimes_RN$ the \emph{classical} tensor product, and denote by $M\otimes^L_RN$ the derived tensor product. Similarly, if $X\to Y\leftarrow Z$ are morphisms of schemes, we denote by $X\times_YZ$ the classical fiber product, and by $X\times^L_Y Z$ the derived fiber product.
\end{enumerate}

\subsection{Tool and computational resource disclosure} We used ChatGPT Pro to proofread this article and for literature search, as well as for some calculations to verify the plausibility of Conjecture \ref{con: main conjecture} in some examples at small primes. The mathematics and writing were done by the authors.

\subsection{Acknowledgments} We would like to thank Sean Cotner and Xinwen Zhu for many helpful and interesting discussions related to the ideas in this work. In addition, the first-named author would like to thank Liang Xiao for the invitation to visit the Beijing International Center for Mathematical Research, where much of the work which went into this article was carried out. The debt this work owes to the ideas of Xinwen Zhu will be obvious to the reader, and is gladly acknowledged.

\section{Preliminaries on representation theory}\label{section: rep theory}

Let $G$ be a split reductive group with connected center over a finite field $\FF_q$. Fix a pinning $(B,T,e)$, where $B$ is a Borel subgroup of $G$, $T\subseteq B$ is a maximal torus, and $e\colon U^-\to \GG_a$ is a non-degenerate additive character on the unipotent radical of the opposite Borel. Let $W$ be the Weyl group of $G$.

Let $\ell$ be a prime not dividing $q$. Let $\Rep(G(\FF_q),\overline{\FF}_\ell)$ denote the derived category of $G(\FF_q)$-representations with coefficients in $\overline{\FF}_\ell$. Let
\[
    \Rep_c(G(\FF_q),\overline{\FF}_\ell)\subseteq \Rep(G(\FF_q),\overline{\FF}_\ell)
\]
be the subcategory of $G(\FF_q)$-representations such that the underlying $\overline\FF_\ell$-complex is perfect. The category $\Rep(G(\FF_q),\overline{\FF}_\ell)$ carries a standard $t$-structure that restricts to a bounded $t$-structure on $\Rep_c(G(\FF_q),\overline{\FF}_\ell)$. Let $\Rep_c(G(\FF_q),\overline{\FF}_\ell)^\heartsuit$ denote the heart of the standard $t$-structure. 

We study the principal block of $\Rep(G(\FF_q),\overline{\FF}_\ell)$ under the assumptions $\ell \mid q-1$ and $\ell\nmid |W|$. Let $T(\FF_q)^{(\ell)}\subseteq T(\FF_q)$ denote the maximal prime-to-$\ell$ subgroup and let $B(\FF_q)^{(\ell)}\subseteq B(\FF_q)$ denote the preimage of $T(\FF_q)^{(\ell)}$ under the quotient map $B(\FF_q)\twoheadrightarrow T(\FF_q)$. Note that $B(\FF_q)^{(\ell)}$ has order prime-to-$\ell$.

\begin{defn}\label{def: R_1}
    Define the $G(\FF_q)$-representations
    \[
        R_1\coloneqq \Ind_{B(\FF_q)}^{G(\FF_q)}\overline{\FF}_\ell,\quad
        \widetilde{R}_1\coloneqq \Ind_{B(\FF_q)^{(\ell)}}^{G(\FF_q)}\overline{\FF}_\ell.
    \]
\end{defn}

\begin{lemma}\label{lemma: tilde R_1 inj proj}
    The $G(\FF_q)$-representation $\widetilde{R}_1$ is projective and injective. The representation $\widetilde{R}_1$ is a finite iterated extension of copies of $R_1$.
\end{lemma}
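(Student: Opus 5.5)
Both claims follow from writing $\widetilde{R}_1$ as an induced representation in stages. The first is the standard fact that a representation induced from a subgroup of order prime to $\ell$ is projective and injective. The second follows by filtering the regular representation of the $\ell$-group $T(\FF_q)/T(\FF_q)^{(\ell)}$.

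\textbf{Projectivity and injectivity.} Since $B(\FF_q)^{(\ell)}$ has order prime to $\ell$, Maschke's theorem makes the trivial representation $\overline{\FF}_\ell$ projective and injective in $\Rep(B(\FF_q)^{(\ell)},\overline{\FF}_\ell)^\heartsuit$. Induction from a finite-index subgroup is both left and right adjoint to restriction, because $\Ind$ and compact induction coincide for finite groups. Restriction is exact, so its left adjoint $\Ind$ preserves projectives and its right adjoint $\Ind$ preserves injectives. Hence $\widetilde{R}_1=\Ind_{B(\FF_q)^{(\ell)}}^{G(\FF_q)}\overline{\FF}_\ell$ is projective and injective. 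Alternatively, $\overline{\FF}_\ell[G(\FF_q)]$ is self-injective and $\widetilde{R}_1$ is a direct summand of the induced module of $\overline{\FF}_\ell[B(\FF_q)^{(\ell)}]$, which is free.

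\textbf{Filtration by copies of $R_1$.} Induce in stages:
\[
\widetilde{R}_1\simeq \Ind_{B(\FF_q)}^{G(\FF_q)}\bigl(\Ind_{B(\FF_q)^{(\ell)}}^{B(\FF_q)}\overline{\FF}_\ell\bigr).
\]
The subgroup $B(\FF_q)^{(\ell)}$ is normal in $B(\FF_q)$, being the preimage of a subgroup of the abelian group $T(\FF_q)$. So the inner induced module is the regular representation $\overline{\FF}_\ell[P]$, inflated to $B(\FF_q)$, where
\[
P=B(\FF_q)/B(\FF_q)^{(\ell)}\simeq T(\FF_q)/T(\FF_q)^{(\ell)}
\]
is the Sylow $\ell$-subgroup of $T(\FF_q)$, a finite abelian $\ell$-group. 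Over a field of characteristic $\ell$, the only simple module of a finite $\ell$-group is the trivial one, so $\overline{\FF}_\ell[P]$ has a composition series of length $|P|$ with all factors trivial. Concretely, one can take the filtration by powers of the augmentation ideal, which is nilpotent. Inflating this filtration to $B(\FF_q)$ and applying the exact functor $\Ind_{B(\FF_q)}^{G(\FF_q)}$ gives a filtration of $\widetilde{R}_1$ with exactly $|P|$ graded pieces, each isomorphic to $R_1$.

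The argument has no real obstacle. The only points needing care are the normality of $B(\FF_q)^{(\ell)}$, which makes the inner induced module inflated from a group algebra, and the fact that induction is simultaneously the left and right adjoint of restriction for finite groups.
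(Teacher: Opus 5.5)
Your proposal is correct and is essentially the paper's proof. The paper gets projectivity and injectivity from Frobenius reciprocity, $\Hom(\widetilde{R}_1,V)=V^{B(\FF_q)^{(\ell)}}$ and $\Hom(V,\widetilde{R}_1)=(V_{B(\FF_q)^{(\ell)}})^\vee$, both exact because $B(\FF_q)^{(\ell)}$ has order prime to $\ell$; this is your adjunction argument in concrete form. It then obtains the filtration, exactly as you do, by writing $\widetilde{R}_1\simeq\Ind_{B(\FF_q)}^{G(\FF_q)}\overline{\FF}_\ell[T(\FF_q)/T(\FF_q)^{(\ell)}]$ and filtering the group algebra of the $\ell$-group by trivial modules.
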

\begin{proof}
    For $V\in \Rep(G(\FF_q),\overline{\FF}_\ell)^{\heartsuit}$, we have 
    \[\Hom_{G(\FF_q)}(\widetilde{R}_1,V)=V^{B(\FF_q)^{(\ell)}}\quad\text{and} \quad\Hom_{G(\FF_q)}(V,\widetilde{R}_1)=(V_{B(\FF_q)^{(\ell)}})^\vee\]
    by Frobenius reciprocity. Therefore $\Hom(\widetilde{R}_1,-),\Hom(-,\widetilde{R}_1)$ are both $t$-exact as $B(\FF_q)^{(\ell)}$ has order prime to $\ell$. The second assertion is clear as we can write
    \[
        \Ind_{B(\FF_q)^{(\ell)}}^{G(\FF_q)}\overline{\FF}_\ell=\Ind_{B(\FF_q)}^{G(\FF_q)}(\overline{\FF}_\ell[T(\FF_q)/T(\FF_q)^{(\ell)}]),
    \]
    where $\overline{\FF}_\ell[T(\FF_q)/T(\FF_q)^{(\ell)}]$ is a finite iterated extension of copies of $\overline{\FF}_\ell$ as $T(\FF_q)$-representations.
\end{proof}

\begin{lemma}\label{lemma: R_1 ss}
    Assume that $\ell\mid q-1$ and $\ell \nmid |W|$. Then the $G(\FF_q)$-representation $R_1$ is semisimple.
\end{lemma}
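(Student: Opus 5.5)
Since the statement concerns only the finite group $G(\FF_q)$, I would prove semisimplicity of $R_1=\Ind_{B(\FF_q)}^{G(\FF_q)}\overline{\FF}_\ell$ through its endomorphism algebra, the Iwahori--Hecke algebra. By Mackey theory and the Bruhat decomposition $G(\FF_q)=\bigsqcup_{w\in W}B(\FF_q)wB(\FF_q)$,
\[
\End_{G(\FF_q)}(R_1)\simeq \cH_{\overline{\FF}_\ell}(W,q),
\]
the Hecke algebra with basis $T_w$ and quadratic relations $(T_s-q)(T_s+1)=0$. Since $\ell\mid q-1$, we have $q=1$ in $\overline{\FF}_\ell$, so this algebra is the group algebra $\overline{\FF}_\ell[W]$. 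It is semisimple by Maschke's theorem, because $\ell\nmid|W|$.

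Next I would deduce semisimplicity of $R_1$ from that of its endomorphism ring. The key input is that $R_1$ is a direct summand of $\widetilde R_1$, and $\widetilde R_1$ is projective and injective by Lemma \ref{lemma: tilde R_1 inj proj}. To see the splitting, note that $[B(\FF_q):B(\FF_q)^{(\ell)}]=|T(\FF_q)/T(\FF_q)^{(\ell)}|$ is a power of $\ell$. So the trivial character need not split off $\overline{\FF}_\ell[T(\FF_q)/T(\FF_q)^{(\ell)}]$, and a direct splitting argument fails. Instead I would use a different route, the Cabanes--Enguehard style criterion: the full subcategory of modules $M$ with a presentation $R_1^{a}\to R_1^{b}\to M\to 0$ is equivalent to $\End(R_1)^{\op}$-modules, provided $R_1$ is projective relative to something. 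This is awkward, so I prefer a cleaner argument.

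The cleaner argument runs as follows. Let $S\subseteq R_1$ be a simple submodule. By Frobenius reciprocity, $\Hom(R_1,V)=V^{B(\FF_q)}$, and the functor $V\mapsto V^{B(\FF_q)}$ is exact on modules on which $U(\FF_q)$ acts and $T(\FF_q)$ acts through its $\ell$-part. I would show that every constituent of $R_1$ satisfies $S^{B}=S^{B^{(\ell)}}$. This holds because $S\subseteq\widetilde R_1$-type modules, and $T(\FF_q)_\ell$ is a unipotent $\ell$-group acting on $S^{B^{(\ell)}}$. The resulting $\ell$-group fixes a nonzero vector, so $S^B\neq0$. Hence every nonzero submodule $M$ of $R_1$ has $\Hom(R_1,M)\neq 0$, and dually (by the self-duality $R_1\simeq R_1^\vee$) every nonzero quotient does as well. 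With these two properties, the standard lemma applies: if $E=\End(R_1)$ is semisimple and $R_1$ has both its socle and its head "generated" by $R_1$, then $R_1$ is semisimple. Concretely, take the idempotent decomposition $1=\sum e_i$ in $E$ into primitive idempotents. Each $e_iR_1$ is indecomposable with $\End(e_iR_1)=e_iEe_i=\overline{\FF}_\ell$. Any nonzero map $e_iR_1\to \mathrm{soc}$ composed with a map back, which exists by the socle/head property and self-duality, gives a nonzero endomorphism, hence an isomorphism. Therefore $e_iR_1$ is simple.

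The main obstacle is this last step: producing, for a simple submodule $S\subseteq e_iR_1$, a map $e_iR_1\to S$ whose composite with the inclusion is nonzero. I would first try to use $\widetilde R_1$: write $R_1\hookrightarrow\widetilde R_1$ as invariants under the $\ell$-part of $T(\FF_q)$, use injectivity of $\widetilde R_1$ to extend maps, and use the fact that an $\ell$-group acting on the image fixes a nonzero vector to land back in $R_1$.
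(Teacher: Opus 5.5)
Your ingredients are the paper's. Both arguments use that $\End(R_1)\simeq\overline{\FF}_\ell[W]$ is semisimple, plus a Frobenius-reciprocity argument relating simple submodules and simple quotients of $R_1$.

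Your version of the second ingredient is fine once stated cleanly. If $S\subseteq R_1$ is simple, then $S_{B(\FF_q)}\neq 0$, so $S^{B(\FF_q)^{(\ell)}}\simeq S_{B(\FF_q)^{(\ell)}}\neq 0$. The $\ell$-group $B(\FF_q)/B(\FF_q)^{(\ell)}$ fixes a nonzero vector there, so $S^{B(\FF_q)}\neq 0$ and $R_1\twoheadrightarrow S$. The paper proves the dual statement, that a simple quotient embeds into $R_1$, using $L^{U(\FF_q)}=L_{U(\FF_q)}$ and commutativity of $T(\FF_q)$. Your self-duality remark recovers that too, though one direction suffices.

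Several side claims are wrong, though none is used:
\begin{enumerate}
\item $R_1$ is not a direct summand of $\widetilde R_1$ in general, because it is not projective. Indeed $\dim R_1=[G(\FF_q):B(\FF_q)]\equiv|W|\not\equiv 0\pmod{\ell}$. Projectives have dimension divisible by $|G(\FF_q)|_\ell$, and $\ell\mid q-1$ divides $|T(\FF_q)|$ for $G$ of positive rank.
\item $V\mapsto V^{B(\FF_q)}$ is not exact, since taking invariants of the $\ell$-group $T(\FF_q)_\ell$ is not exact.
\item Your fixed-point argument shows only $S^{B(\FF_q)}\neq 0$, not $S^{B(\FF_q)}=S^{B(\FF_q)^{(\ell)}}$. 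Nonvanishing is all you need.
\end{enumerate}

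The genuine gap is the final step. You leave it open as the ``main obstacle'' and propose to attack it via injectivity of $\widetilde R_1$. That route is unnecessary, and it is unclear how it would produce a map $e_iR_1\to S$.

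The missing observation is that the map need not come from $e_iR_1$ itself, because semisimplicity of $E$ already controls maps between summands. Let $S\subseteq e_iR_1$ be simple, and let $\phi\in E$ be the composite $R_1\twoheadrightarrow S\hookrightarrow e_iR_1\subseteq R_1$. Then $\phi=e_i\phi$ is a nonzero element of the right ideal $e_iE$. This ideal is simple because $E$ is semisimple and $e_i$ is primitive. Hence $\phi E=e_iE$, so $e_i=\phi\psi$ for some $\psi\in E$, and $e_iR_1\subseteq\phi(R_1)=S$. Thus $e_iR_1=S$ is simple.

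The paper finishes the same way in spirit. It writes $R_1=\bigoplus_i M_i^{\oplus m_i}$ with $\End(M_i)=\overline{\FF}_\ell$ and $\Hom(M_i,M_j)=0$ for $i\neq j$. It embeds a simple quotient $L$ of $M_i$ into some $M_j$ and deduces $j=i$ from the vanishing of $\Hom(M_i,M_j)$. Then the nonzero endomorphism $M_i\twoheadrightarrow L\hookrightarrow M_i$ is invertible, so $M_i\simeq L$.
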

\begin{proof}
    First, the endomorphism ring $\End(R_1)=\overline{\FF}_\ell[B(\FF_q)\backslash G(\FF_q)/B(\FF_q)]$ is isomorphic to the group algebra $\overline{\FF}_\ell[W]$ by assumption, and hence is semisimple. Therefore we can write $R_1$ as a direct sum
    \[
        R_1=\bigoplus_{i=1}^n M_i^{\oplus m_i}
    \]
    where $\End(M_i)=\overline{\FF}_\ell$ and $\Hom(M_i,M_j)=0$ for $i\neq j$.
    
    We claim that any irreducible quotient $L$ of $R_1$ is also an irreducible subrepresentation of $R_1$. Indeed, by Frobenius reciprocity we have $L^{B(\FF_q)}\neq 0$. Since $U(\FF_q)$ is a $p$-group, we know that $L^{U(\FF_q)}=L_{U(\FF_q)}$ with the same $T(\FF_q)$-action. Since $T(\FF_q)$ is abelian, a finite dimensional representation of $T(\FF_q)$ with a trivial subrepresentation also has a trivial quotient. Therefore $L_{B(\FF_q)}\neq 0$ and hence there exists an injective map $L\hookrightarrow R_1$.

    Now let $M_i\twoheadrightarrow L$ be an irreducible quotient. By the preceding argument, $L$ is also an irreducible subrepresentation of some  $M_j$. In particular, there is a non-zero map $M_i\hookrightarrow L\twoheadrightarrow M_j$. Hence we necessarily have $i=j$. Then the map $M_i\twoheadrightarrow L\hookrightarrow M_i$ is invertible because $\End(M_i)=\overline{\FF}_\ell$. Therefore $M_i\simeq L$ is simple.
\end{proof}

\begin{defn}
    Assume that $\ell\mid q-1$ and $\ell\nmid |W|$.
    \begin{enumerate}
        \item Define the subcategory 
    \[
        \Rep^0(G(\FF_q),\overline{\FF}_\ell)\subseteq \Rep(G(\FF_q),\overline{\FF}_\ell)
    \]
    to be the category of representations $V$ such that each cohomology group $H^i(V)$ is generated by its $B(\FF_q)^{(\ell)}$-invariants. 
        \item Define the subcategory
    \[
        \Rep^{>0}(G(\FF_q),\overline{\FF}_\ell)=\{V\in \Rep(G(\FF_q),\overline{\FF}_\ell)| V^{B(\FF_q)^{(\ell)}}=0\}.
    \]
    \end{enumerate}
\end{defn}

\begin{prop}\label{prop: principle block generator}
    Assume that $\ell\mid q-1$ and $\ell\nmid |W|$. Then $\Rep^0(G(\FF_q),\overline{\FF}_\ell)$ is a presentable stable subcategory of $\Rep(G(\FF_q),\overline{\FF}_\ell)$, and there is a direct sum decomposition
    \[
        \Rep(G(\FF_q),\Lambda)=\Rep^0(G(\FF_q),\Lambda)\oplus \Rep^{>0}(G(\FF_q),\Lambda)
    \]
    of presentable stable $\overline{\FF}_\ell$-linear categories.
\end{prop}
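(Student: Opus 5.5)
The plan is to realize the decomposition as a block decomposition cut out by the projective–injective generator $\widetilde{R}_1$ of Lemma~\ref{lemma: tilde R_1 inj proj}. Since $\overline{\FF}_\ell[G(\FF_q)]$ is a finite-dimensional algebra, the abelian category of representations decomposes into blocks. The first step is to show that the full subcategory $\mathcal{A}^0$ of the heart consisting of modules generated by their $B(\FF_q)^{(\ell)}$-invariants is a union of blocks, and that $\mathcal{A}^{>0}=\{V : V^{B(\FF_q)^{(\ell)}}=0\}$ is the union of the complementary blocks. Once this abelian statement holds, the derived statement follows formally: a decomposition of the group algebra by a central idempotent $e$ gives $\Rep=\Mod_{e\overline{\FF}_\ell[G]}\oplus\Mod_{(1-e)\overline{\FF}_\ell[G]}$ as presentable stable categories. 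Moreover, $V$ lies in the first summand iff each $H^i(V)$ does. Since $V^{B(\FF_q)^{(\ell)}}=R\Hom(\widetilde{R}_1,V)$ is $t$-exact, $V$ lies in $\Rep^{>0}$ iff each $H^i(V)$ has no $B(\FF_q)^{(\ell)}$-invariants.

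For the abelian statement, I would use the exactness of $V\mapsto V^{B(\FF_q)^{(\ell)}}=\Hom(\widetilde{R}_1,V)$. Let $S$ be the set of simple modules $L$ with $L^{B(\FF_q)^{(\ell)}}\neq0$; these are exactly the composition factors of the head of $\widetilde{R}_1$. I claim that an indecomposable projective $P_L$ with $L\in S$ is a summand of $\widetilde{R}_1$, so that $P_L$ is also injective. I also claim that all composition factors of $P_L$ lie in $S$. The second claim is the key step. By Lemma~\ref{lemma: tilde R_1 inj proj}, $\widetilde{R}_1$ is an iterated extension of copies of $R_1$. By Lemma~\ref{lemma: R_1 ss}, $R_1$ is semisimple, and each simple summand $M$ of $R_1$ has $M^{B(\FF_q)}\neq0$ by Frobenius reciprocity. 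Hence every composition factor of $\widetilde{R}_1$ lies in $S$. This is where the hypotheses $\ell\mid q-1$ and $\ell\nmid|W|$ enter.

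Conversely, I must show that no simple $L'\notin S$ has a nonzero extension with some $L\in S$ in either direction. Suppose $0\to L\to E\to L'\to0$ is nonsplit. Then $E$ embeds in the injective hull of $L$, which is a summand of $\widetilde{R}_1$ by the previous step. This forces $L'\in S$, a contradiction. For the other direction, $0\to L'\to E\to L\to 0$ nonsplit means $E$ is a quotient of $P_L\subseteq\widetilde{R}_1$, which again forces $L'\in S$. Hence $S$ is a union of blocks, and the complementary blocks consist of modules $V$ with $V^{B(\FF_q)^{(\ell)}}=0$, using exactness of invariants.

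It remains to identify the $S$-block with $\mathcal{A}^0$. A module in the $S$-block has a projective cover that is a sum of summands of $\widetilde{R}_1$, so it is a quotient of $\widetilde{R}_1^{\oplus n}$ and hence generated by its invariants. Conversely, a module generated by its invariants is a quotient of copies of $\widetilde{R}_1$, so all its composition factors lie in $S$. Presentability and stability of $\Rep^0$ then follow, since it is a direct summand of a presentable stable category. The main obstacle is the closure of the composition factors of $\widetilde{R}_1$ under $S$. This rests on the semisimplicity of $R_1$ together with the fact that every simple constituent of $R_1$ has $B(\FF_q)$-invariants. I expect this to be straightforward given Lemma~\ref{lemma: R_1 ss}.
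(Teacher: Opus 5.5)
Your proposal is correct and follows essentially the paper's route. The paper packages your abelian step as the abstract Lemma~\ref{lemma: decomp of abelian cat} applied to $P=\widetilde{R}_1$: it is projective–injective, and every Jordan--H\"older factor is a quotient of it by Lemmas~\ref{lemma: tilde R_1 inj proj} and~\ref{lemma: R_1 ss}. That lemma is proved by a direct splitting argument rather than Ext-linkage of blocks, and the passage to the derived category via $\overline{\FF}_\ell[G(\FF_q)]=Q^0\oplus Q^{>0}$ is your central-idempotent argument in different form.

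One step to tighten: the injective hull of $L\in S$ being a summand of $\widetilde{R}_1$ does not follow from the projective-cover step alone. It does hold, either because $\Hom(L,\widetilde{R}_1)\simeq (L_{B(\FF_q)^{(\ell)}})^\vee\simeq (L^{B(\FF_q)^{(\ell)}})^\vee\neq 0$ (as $|B(\FF_q)^{(\ell)}|$ is prime to $\ell$), or because group algebras are symmetric, so the injective hull of $L$ is $P_L$.
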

\begin{proof}
    We first apply Lemma \ref{lemma: decomp of abelian cat} to $\mathcal{A}=\Rep_c(G(\FF_q),\Lambda)^\heartsuit$ and $P=\widetilde{R}_1$. Conditions (1) and (2) of Lemma \ref{lemma: decomp of abelian cat} follow from Lemma \ref{lemma: tilde R_1 inj proj} and Lemma \ref{lemma: R_1 ss}. We see that there is a direct sum decomposition 
    \begin{equation}\label{eq: principal decomp abelian cat}
        \Rep_c(G(\FF_q),\Lambda)^\heartsuit= \Rep^0_c(G(\FF_q),\Lambda)^\heartsuit\oplus \Rep^{>0}_c(G(\FF_q),\Lambda)^\heartsuit
    \end{equation}        
    of abelian categories. Note that $\Rep(G(\FF_q),\overline{\FF}_\ell)^\heartsuit$ admits a projective generator $Q=\overline{\FF}_\ell[G(\FF_q)]\in \Rep_c(G(\FF_q),\Lambda)^\heartsuit$. By \eqref{eq: principal decomp abelian cat}, we have a direct sum decomposition
    \[
        Q=Q^0\oplus Q^{>0}
    \]
    with $Q^0\in \Rep^0_c(G(\FF_q),\Lambda)^\heartsuit$ and $Q^{>0}\in \Rep^{>0}_c(G(\FF_q),\Lambda)^\heartsuit$ such that $\Hom(Q^0,Q^{>0})=\Hom(Q^{>0},Q^0)=0$. Then 
    \[
    \Rep(G(\FF_q),\Lambda)=\langle Q^0\rangle \oplus \langle Q^{>0}\rangle
    \]
    where $\langle Q^0\rangle$ (resp. $\langle Q^{>0}\rangle$) is the presentable stable subcategory generated by $Q^0$ (resp. $Q^{>0}$). It is clear that $\langle Q^0\rangle=\langle \widetilde{R}_1\rangle=\Rep^0(G(\FF_q),\Lambda)$. Then $\langle Q^{>0}\rangle$ is identified with $\Rep^{>0}(G(\FF_q),\Lambda)$ consisting of objects $V$ with $\Hom_{G(\FF_q)}(V,\widetilde{R}_1)=0$.
\end{proof}

\begin{lemma}\label{lemma: decomp of abelian cat}
    Let $\mathcal{A}$ be a finite-length (i.e. Artinian and Noetherian) abelian category, and let $P\in\mathcal{A}$ satisfy the following conditions:
    \begin{enumerate}
        \item The object $P$ is both injective and projective.
        \item For any simple Jordan--H\"older factor $L$ of $P$, there exists a non-zero surjective map $P\twoheadrightarrow L$.
    \end{enumerate}
    Let $\mathcal{A}_P$ be the Serre subcategory of $\mathcal{A}$ generated by simple Jordan--H\"older factors of $P$. Let $\mathcal{A}^P$ be the subcategory of $\mathcal{A}$ consisting of objects $X\in\mathcal{A}$ such that $\Hom(X,P)=0$. Then there is a direct sum decomposition
    \[
        \mathcal{A}=\mathcal{A}_P\oplus \mathcal{A}^P
    \]
    of abelian categories.
\end{lemma}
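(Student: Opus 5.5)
The plan is to show that every object $X\in\mathcal{A}$ splits canonically as $X_P\oplus X'$ with $X_P\in\mathcal{A}_P$ and $X'\in\mathcal{A}^P$, and that there are no nonzero morphisms between the two subcategories in either direction. Let $S$ be the finite set of simple Jordan--H\"older factors of $P$. The first step is the key observation: by (2), every $L\in S$ is a quotient of $P$. Dually, since $P$ is injective, $\Hom(-,P)$ is exact, so $\Hom(X,P)=0$ holds for $X$ if and only if it holds for all composition factors of $X$.

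Next I will show that a simple $L$ satisfies $\Hom(L,P)\neq 0$ if and only if $L\in S$. One direction is clear, since a nonzero map $L\to P$ is injective and makes $L$ a factor of $P$. For the converse, take $L\in S$ and use injectivity of $P$ to build a nonzero map $L\to P$. By (2) there is a surjection $P\twoheadrightarrow L$. I will argue via projectivity: the projective cover $P_L$ of $L$ is a summand of $P$, since finite length makes $\mathcal{A}$ Krull--Schmidt and $P\twoheadrightarrow L$ lifts. As $P_L$ is a summand of the injective $P$, it is injective indecomposable, hence has simple socle $L'$ and is the injective hull of $L'$. We need $L'$ to be the socle of something mapping to $L$. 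I expect this to be the main obstacle. The cleanest approach is a counting argument using the exact functors $\Hom(P,-)$ and $\Hom(-,P)$. Namely, $[X:L]\neq 0$ for some $L\in S$ if and only if $\Hom(P,X)\neq 0$, and $\Hom(X,P)\neq0$ if and only if $X$ has a factor $L'$ with $\Hom(L',P)\neq 0$. The soc map $L\mapsto \mathrm{soc}(P_L)$ is an injection from $S$ into $S$ (distinct injective hulls have distinct socles, and socles lie in $S$). Since $S$ is finite, it is a bijection, so every $L\in S$ is a socle of $P$, which gives $\Hom(L,P)\neq0$.

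Consequently, $\mathcal{A}^P$ is exactly the Serre subcategory of objects with no composition factor in $S$. Hence $\mathcal{A}_P$ and $\mathcal{A}^P$ have disjoint sets of simples and are both Serre. There are no nonzero morphisms between them, since the image of any such morphism would lie in both. To decompose $X$, let $X_P$ be the largest subobject in $\mathcal{A}_P$, which exists by the Noetherian property and closure under sums, and then show $X/X_P\in\mathcal{A}^P$ and that the sequence splits. For the splitting it suffices to prove $\mathrm{Ext}^1$ vanishing between simples $L\in S$ and $M\notin S$ in both directions. For $\mathrm{Ext}^1(L,M)$, apply $\Hom(-,M)$ to $0\to K\to P_L\to L\to 0$. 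Here $K\subseteq P_L\subseteq P$ has factors only in $S$, so $\Hom(K,M)=0$, and hence $\mathrm{Ext}^1(L,M)=0$ because $P_L$ is projective. For $\mathrm{Ext}^1(M,L)$, apply $\Hom(M,-)$ to $0\to L\to I_L\to I_L/L\to0$. Here $I_L$ is the injective hull of $L$, which is a summand of $P$ by the bijection above, so $I_L/L$ has factors in $S$ and $\Hom(M,I_L/L)=0$. Induction on length then gives $\mathrm{Ext}^1(\mathcal{A}_P,\mathcal{A}^P)=0=\mathrm{Ext}^1(\mathcal{A}^P,\mathcal{A}_P)$. By a standard filtration argument, every $X$ is then $X_P\oplus X'$, which yields the decomposition.
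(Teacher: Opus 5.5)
Your proposal is correct and rests on the same key step as the paper: you decompose $P$ into indecomposable projective-injective summands, then use finiteness of $S$ to see that the map sending each top to the socle of its summand is a bijection. Hence every $L\in S$ embeds in $P$, and $\mathcal{A}^P$ is exactly the Serre subcategory of objects with no composition factor in $S$. The only difference is in the splitting. The paper takes $Y=\bigcap_{f\colon X\to P}\ker f$, so that $Y\in\mathcal{A}^P$ and $X/Y\in\mathcal{A}_P$ follow at once from injectivity of $P$, and it needs only $\mathrm{Ext}^1(\mathcal{A}_P,\mathcal{A}^P)=0$, obtained from a presentation by copies of $P$. Your maximal $\mathcal{A}_P$-subobject instead requires Ext vanishing in both directions, which you correctly supply using projective covers and injective hulls inside $P$.
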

\begin{proof}
     We can write
    \[
        P=\bigoplus_{i=1}^n P_i^{\oplus m_i}
    \]
    where $P_i$ are pairwise non-isomorphic indecomposable objects that are both injective and projective. Each $P_i$ has a unique simple quotient $L_i$ and a unique simple subobject $L_i'$. Moreover, $L_1,\dots,L_n$ are pairwise non-isomorphic and $L_1',\dots,L_n'$  are pairwise non-isomorphic. By (2), it follows that
    \[
        S=\{L_1,\dots,L_n\}=\{L_1',\dots,L_n'\}
    \]
    is the set of pairwise non-isomorphic irreducible Jordan--H\"older factors of $P$, and $\mathcal{A}_P$ is the Serre subcategory generated by $S$. In particular, for $i=1,\dots, n$, there exists an injective map $L_i\hookrightarrow P$. 

    By injectivity of $P$, every nonzero object $X\in \mathcal{A}_P$ admits a nonzero map $X\to P$. Therefore $\mathcal{A}^P\cap \mathcal{A}_P=0$. In particular, we have
    \[
        \Hom_{\mathcal{A}}(X,Y)=0=\Hom_{\mathcal{A}}(Y,X)
    \]
    for any $X\in\mathcal{A}_P,Y\in\mathcal{A}^P$.
    
    Let $M\in \mathcal{A}$. Consider
    \[
        Y=\bigcap_{f\colon X\to P}\ker(f).
    \]
    Since $X$ has finite length, this intersection equals a finite intersection. Let $Z=X/Y$. We have a short exact sequence
    \[
        0\to Y\to X\to Z\to 0.
    \]
    If there exists a non-zero map $Y\to P$, then by injectivity of $P$ we can extend it to a map $X\to P$, contradicting the definition of $Y$. Therefore $\Hom(Y,P)=0$ and hence $Y\in \mathcal{A}^P$. On the other hand, $Z$ embeds into a finite direct sum of copies of $P$. Therefore the simple Jordan--H\"older factors of $Z$ belong to $S$, and hence $Z\in \mathcal{A}_P$. Now it suffices to show that the extension splits. By projectivity of $P$, we can find a short exact sequence
    \[
        0\to W\to P^{\oplus m}\to Z\to 0
    \]
    with $W\in\mathcal{A}_P$. It induces an exact sequence
    \[
        \Hom_{\mathcal{A}}(W,Y) \to \mathrm{Ext}^1_{\mathcal{A}}(Z,Y)\to \mathrm{Ext}^1_{\mathcal{A}}(P^{\oplus m},Y).
    \]
    However, we know that $\Hom_{\mathcal{A}}(W,Y)=0$ since $W\in\mathcal{A}_P$, and $\mathrm{Ext}^1_{\mathcal{A}}(P^{\oplus m},Y)=0$ by projectivity of $P$. Therefore every extension of $Z$ by $Y$ splits.
\end{proof}

Fix a non-trivial character $\psi\colon \FF_q\to \overline{\FF}_\ell^\times$. 
\begin{defn}
    Define the Gelfand--Graev representation to be
    \[
        \Gamma_\psi\coloneqq \Ind_{U^-(\FF_q)}^{G(\FF_q)}\psi\circ e.
    \]
    Here $\psi\circ e\colon U^-(\FF_q)\to\overline{\FF}_\ell^\times$ is the resulting non-degenerate character.
\end{defn}

Note that $\Gamma_\psi$ is a compact projective object in $\Rep(G(\FF_q),\overline{\FF}_\ell)^\heartsuit$.

Let $\hat{G}$ denote the dual group of $G$ over $\overline{\FF}_\ell$. Let $\hat{T}\subseteq \hat{G}$ be the dual maximal torus. Let 
\[
\hat{C}\coloneqq \hat{G}\git \hat{G}\simeq \hat{T}\git W
\]
denote the Chevalley quotient. Let $[q]$ denote the $q$-power maps on $\hat{G}$ and $\hat{T}$. We also denote by $[q]\colon\hat{C}\to\hat{C}$ the map induced by the $q$-power map on $\hat{G}$. Let $\hat{C}^{[q]}$ denote the subscheme of  $[q]$-fixed points. The scheme $\hat{C}^{[q]}$ is finite over $\overline{\FF}_\ell$. Let $\hat{C}^{[q]}_u$ denote the connected component containing the identity. Similarly, let $\hat{T}^{[q]}$ denote the subscheme of $[q]$-fixed points. Let $\hat{T}^{[q]}_u$ denote the connected component containing the identity. There is a natural isomorphism
\[
    \cO(\hat{T}^{[q]}_u)\simeq \overline{\FF}_\ell[T(\FF_q)/T(\FF_q)^{(\ell)}]
\]
of $\overline{\FF}_\ell$-algebras.

By \cite[\S 5.2]{Eteve-Jordan}, the category $\Rep(G(\FF_q),\overline{\FF}_\ell)$ is naturally an $\cO(\hat{C}^{[q]})$-linear category. The construction of the $\cO(\hat{C}^{[q]})$-action will be reviewed in Proposition \ref{prop: F_G linear}. In particular, we obtain a direct sum decomposition
\[  
    \Rep(G(\FF_q),\overline{\FF}_\ell)=\bigoplus_{\mathfrak{s}\in\pi_0(\hat{C}^{[q]})}\Rep^{\mathfrak{s}}(G(\FF_q),\overline{\FF}_\ell).
\]
Consider the category of unipotent representations
\[
    \Rep^{\widehat\unip}(G(\FF_q),\overline{\FF}_\ell)\coloneqq \Rep^{\hat{C}^{[q]}_u}(G(\FF_q),\overline{\FF}_\ell).
\]
By \cite[Lemma 5.3.1.(i)]{Eteve-Jordan}, the representation $\widetilde{R}_1$  lies in $\Rep^{\widehat\unip}(G(\FF_q),\overline{\FF}_\ell)$. Therefore
\begin{equation}\label{eq: 0 in unip}
    \Rep^0(G(\FF_q),\overline{\FF}_\ell)\subseteq \Rep^{\widehat\unip}(G(\FF_q),\overline{\FF}_\ell).
\end{equation}
Denote
\[
    \Rep_c^{\widehat\unip}(G(\FF_q),\overline{\FF}_\ell)= \Rep^{\widehat\unip}(G(\FF_q),\overline{\FF}_\ell)\cap \Rep_c(G(\FF_q),\overline{\FF}_\ell).
\]

By \cite[Theorem 6.1.3]{Eteve-Jordan}, the natural map
\[
    \cO(\hat{C}^{[q]})\to \End_{G(\FF_q)}(\Gamma_\psi)
\]
is an isomorphism. We see that there is a direct sum decomposition $\Gamma_\psi =\bigoplus_{\mathfrak{s}\in \pi_0(\hat{C}^{[q]})}\Gamma_{\psi}^\mathfrak{s}$ with each $\Gamma_{\psi}^\mathfrak{s}$ an indecomposable projective in $\Rep_c(G(\FF_q),\overline{\FF}_\ell)^\heartsuit$. Let 
\begin{equation}\label{eq: unip GG}
    \Gamma_\psi^{\widehat\unip}\coloneqq \Gamma_\psi^{\hat{C}^{[q]}_u}
\end{equation}
denote the direct summand in $\Rep^{\widehat\unip}(G(\FF_q),\overline{\FF}_\ell)$. It is clear that
\begin{equation}\label{eq: End Gamma_psi^unip}
    \End_{G(\FF_q)}(\Gamma_\psi^{\widehat\unip})\simeq \cO(\hat{C}^{[q]}_u).
\end{equation}

\begin{prop}\label{prop:GGinRep0}
    Assume that $\ell\mid q-1$ and $\ell\nmid |W|$. The representation $\Gamma_{\psi}^{\widehat\unip}$ lies in the category $\Rep^0_c(G(\fq), \overline{\FF}_\ell)^{\heartsuit}$. 
\end{prop}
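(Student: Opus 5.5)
The plan is to use the block decomposition of Proposition~\ref{prop: principle block generator} together with the indecomposability of $\Gamma_\psi^{\widehat\unip}$, which reduces the statement to showing that a single Hom space is non-zero.

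\textbf{Step 1: reduction to a non-vanishing statement.} Since $\Gamma_\psi^{\widehat\unip}$ is a direct summand of $\Gamma_\psi$, it lies in $\Rep_c(G(\FF_q),\overline{\FF}_\ell)^\heartsuit$. By \eqref{eq: End Gamma_psi^unip}, its endomorphism ring is $\cO(\hat{C}^{[q]}_u)$, which is local because $\hat{C}^{[q]}_u$ is a connected finite scheme; hence $\Gamma_\psi^{\widehat\unip}$ is indecomposable. By Proposition~\ref{prop: principle block generator}, equivalently the abelian decomposition \eqref{eq: principal decomp abelian cat}, we can write $\Gamma_\psi^{\widehat\unip}=X\oplus Y$ with $X$ in the $\Rep^0$ part and $Y$ in the $\Rep^{>0}$ part. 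Indecomposability forces one of $X,Y$ to vanish. By Lemma~\ref{lemma: decomp of abelian cat}, the summand $\mathcal{A}^P$ consists of the objects with no non-zero map to $P=\widetilde{R}_1$. So it suffices to prove
\[
\Hom_{G(\FF_q)}(\Gamma_\psi^{\widehat\unip},\widetilde{R}_1)\neq 0.
\]

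\textbf{Step 2: removing the block projection.} By \eqref{eq: 0 in unip}, $\widetilde{R}_1$ lies in $\Rep^{\widehat\unip}(G(\FF_q),\overline{\FF}_\ell)$. Hence $\Hom(\Gamma_\psi^{\mathfrak s},\widetilde{R}_1)=0$ for every $\mathfrak s\neq \hat{C}^{[q]}_u$, because distinct summands of the $\cO(\hat{C}^{[q]})$-linear decomposition are mutually orthogonal. This gives
\[
\Hom(\Gamma_\psi^{\widehat\unip},\widetilde{R}_1)=\Hom(\Gamma_\psi,\widetilde{R}_1).
\]
Lemma~\ref{lemma: tilde R_1 inj proj} shows that $R_1$ is built from $\widetilde{R}_1$ by iterated extensions, with a copy of $R_1$ as a subobject: the $T(\FF_q)$-socle of $\overline{\FF}_\ell[T(\FF_q)/T(\FF_q)^{(\ell)}]$ contains the trivial character. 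Thus $R_1\hookrightarrow \widetilde{R}_1$. Since $\Gamma_\psi$ is projective, $\Hom(\Gamma_\psi,-)$ is exact, and it is enough to show $\Hom(\Gamma_\psi,R_1)\neq 0$.

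\textbf{Step 3: Mackey computation.} By Frobenius reciprocity,
\[
\Hom(\Gamma_\psi,R_1)=\Hom_{U^-(\FF_q)}\bigl(\psi\circ e,\ \mathrm{Res}_{U^-(\FF_q)}R_1\bigr).
\]
Apply the Mackey formula to $\mathrm{Res}_{U^-(\FF_q)}\Ind_{B(\FF_q)}^{G(\FF_q)}\overline{\FF}_\ell$. The open double coset $B(\FF_q)\cdot U^-(\FF_q)$ has $B\cap U^-=1$, so it contributes the regular representation $\overline{\FF}_\ell[U^-(\FF_q)]$ as a direct summand. The group $U^-(\FF_q)$ is a $p$-group with $p\neq\ell$, so this regular representation is semisimple and contains every character, in particular $\psi\circ e$. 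Hence $\Hom(\Gamma_\psi,R_1)\neq 0$, and the proposition follows.

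\textbf{Expected main obstacle.} The only delicate point is Step 2: it must be verified that the decomposition of Proposition~\ref{prop: principle block generator} is compatible with the block decomposition over $\pi_0(\hat{C}^{[q]})$, so that orthogonality of distinct blocks applies to $\widetilde{R}_1$. This is exactly what \eqref{eq: 0 in unip}, i.e.\ \cite[Lemma 5.3.1.(i)]{Eteve-Jordan}, provides.
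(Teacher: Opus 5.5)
Your proof is correct, but it detects membership in the principal block from the opposite side to the paper.

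\textbf{The paper's route.} It invokes \cite[Theorem 6.1.2]{Eteve-Jordan}, namely $\Hom_{G(\FF_q)}(\Ind_{U(\FF_q)}^{G(\FF_q)}\overline{\FF}_\ell,\Gamma_\psi)\simeq\cO(\hat{T}^{[q]})$ as $\cO(\hat{C}^{[q]})$-modules. Projecting to the unipotent block gives $\Hom(\widetilde{R}_1,\Gamma_\psi^{\widehat\unip})\simeq\cO(\hat{T}^{[q]}_u)\neq 0$. So $\Gamma_\psi^{\widehat\unip}$ has nonzero $B(\FF_q)^{(\ell)}$-invariants, is not in $\Rep^{>0}$, and indecomposability finishes the argument.

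\textbf{Your route.} You test maps $\Gamma_\psi^{\widehat\unip}\to\widetilde{R}_1$ instead, using the $\mathcal{A}^P$ description from Lemma~\ref{lemma: decomp of abelian cat}. This agrees with $\Rep^{>0}$ because invariants and coinvariants of the prime-to-$\ell$ group $B(\FF_q)^{(\ell)}$ coincide. You then:
\begin{enumerate}
\item remove the block projection using \eqref{eq: 0 in unip} and orthogonality of the blocks;
\item pass to $R_1\hookrightarrow\widetilde{R}_1$, using that $\overline{\FF}_\ell[T(\FF_q)/T(\FF_q)^{(\ell)}]$ has trivial socle;
\item conclude by a Mackey computation on the open cell $U^-(\FF_q)B(\FF_q)$, where $U^-\cap B=1$ yields a regular-representation summand containing $\psi\circ e$.
\end{enumerate}
All of these steps are valid. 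In Step 2 only left exactness of $\Hom(\Gamma_\psi,-)$ is needed for the injection, so projectivity of $\Gamma_\psi$ plays no role there.

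\textbf{What each buys.} Your argument is more elementary: it uses only Frobenius reciprocity and Mackey, not the Eteve--Jordan computation of homomorphisms out of $\Ind_{U(\FF_q)}^{G(\FF_q)}\overline{\FF}_\ell$ into the Gelfand--Graev representation. The paper's argument uses a deeper input, but it produces the precise isomorphism \eqref{eq: Hom R_e, Gamma} of $\cO(\hat{C}^{[q]}_u)$-modules as a byproduct. That isomorphism is reused essentially in Proposition~\ref{prop: V_a flat} to prove flatness of $V_a$. Your nonvanishing statement suffices for the proposition itself but not for that later use.
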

\begin{proof}
    By \cite[Theorem 6.1.2]{Eteve-Jordan}, we have
    \[
        \Hom_{G(\FF_q)}(\Ind_{U(\FF_q)}^{G(\FF_q)}\overline{\FF}_\ell,\Gamma_\psi)\simeq \cO(\hat{T}^{[q]})
    \]
    as $\cO(\hat{C}^{[q]})$-modules. It is clear that $\widetilde{R}_1$ is equal to the direct summand of $\Ind_{U(\FF_q)}^{G(\FF_q)}\overline{\FF}_\ell$ in $\Rep^{\widehat\unip}(G(\FF_q),\overline{\FF}_\ell)$. We see that
    \begin{equation}\label{eq: Hom R_e, Gamma}
        \Hom_{G(\FF_q)}(\widetilde{R}_1,\Gamma_{\psi}^{\widehat\unip})\simeq \cO(\hat{T}^{[q]}_u),
    \end{equation}
    which is in particular nonzero. As $\Gamma_{\psi}^{\widehat\unip}$ is indecomposable, we  have $\Gamma_{\psi}^{\widehat\unip}\in \Rep^0(G(\FF_q),\Lambda)$ by Proposition \ref{prop: principle block generator}.   
\end{proof}

\section{Finite Langlands functor}
\subsection{Tame categorical local Langlands}
Let $F=\FF_q(\!(t)\!)$ be a non-Archimedean local field. Let $O_F$ be the ring of integers in $F$. Let $G$ be a split reductive group over $\FF_q$. By abuse of notation, let $G$ denote the split reductive group over $O_F$ obtained from $G/\FF_q$ by base change. Let $W_F$ denote the Weil group of $F$. Let $\LS^\tame_{\hat{G}}$ denote the stack of tame local Langlands parameters of $\hat{G}$ over $\overline{\FF}_\ell$ defined in \cite[\S 2.2.1]{ZTame}. After choosing a lift of the arithmetic Frobenius $\sigma\in W_F$ and a topological generator $\tau$ of the tame inertia, we obtain an isomorphism
\begin{equation}\label{eq: LS= C^q/G}
    \LS^\tame_{\hat{G}}\xrightarrow{\sim} \Comm^q_{\hat{G}}/\hat{G},
\end{equation}
where $\Comm^q_{\hat{G}}$ is the $q$-commuting scheme
\[\Comm^q_{\hat{G}}\coloneqq \{ x,y\in \hat{G}\,|\, xyx^{-1}=y^q\}\]
over $\overline{\FF}_\ell$.
The map \eqref{eq: LS= C^q/G} is defined by sending a Langlands parameter $\varphi\colon W_F\to\hat{G}$ to $(\varphi(\sigma),\varphi(\tau))$. Note that there is a natural map 
\begin{equation}\label{eq: LS -> C^[q]}
    \LS^\tame_{\hat{G}}\to \hat{C}^{[q]}
\end{equation}
sending $(x,y)\in\Comm^q_{\hat{G}}$ to the image of $y$ in the Chevalley quotient. Let $\LS^{\widehat\unip}_{\hat{G}}\subseteq \LS^\tame_{\hat{G}}$ denote the open and closed substack defined by the preimage of $\hat{C}^{[q]}_u\subseteq \hat{C}^{[q]}$. Let $\Coh(\LS^\tame_{\hat{G}})$ denote the derived category of coherent sheaves on $\LS^\tame_{\hat{G}}$. Let 
\[
\DD_\GS=\underline{R\Hom}(-,\omega_{\LS^\tame_{\hat{G}}})\colon \Coh(\LS^\tame_{\hat{G}})^\op\xrightarrow{\sim} \Coh(\LS^\tame_{\hat{G}}).
\]
denote the Grothendieck--Serre duality. Let $c\colon \LS^\tame_{\hat{G}}\xrightarrow{\sim}\LS^\tame_{\hat{G}}$ denote the involution defined by the Chevalley involution on $\hat{G}$. Let
\[
    \DD_\GS'\coloneqq c^*\circ \DD_\GS\colon \Coh(\LS^\tame_{\hat{G}})^\op\xrightarrow{\sim} \Coh(\LS^\tame_{\hat{G}})
\]
denote the modified Grothendieck--Serre duality.

Let $\Rep(G(F),\overline{\FF}_\ell)$ denote the derived category of $G(F)$-representations. Let $\Rep_\fg(G(F),\overline{\FF}_\ell)\subseteq \Rep(G(F),\overline{\FF}_\ell)$ be the subcategory of finitely generated representations in \cite[Proposition 3.57]{ZTame}. Recall that $\Rep_\fg(G(F),\overline{\FF}_\ell)$ contains the subcategory $\Rep(G(F),\overline{\FF}_\ell)^\omega$ of compact objects. By \cite[Corollary 3.62]{ZTame}, there is a canonical cohomological duality
\begin{equation}\label{eq: D_coh^fg}
    \DD_\coh^\fg\colon \Rep_\fg(G(F),\overline{\FF}_\ell)^\op\xrightarrow{\sim} \Rep_\fg(G(F),\overline{\FF}_\ell)
\end{equation}
such that for an open compact subgroup $K\subseteq G(F)$ and a $K$-representation $V$ whose underlying $\overline{\FF}_\ell$-complex is perfect, there is a canonical isomorphism
\begin{equation}\label{eq: D cInd}
    \DD_\coh^\fg(\cInd_K^{G(F)}V)\simeq \cInd_K^{G(F)} V^\vee.
\end{equation}
Here $V^\vee=R\Hom_{\overline{\FF}_\ell}(V,\overline{\FF}_\ell)$ with the natural $K$-action. 

Let 
\begin{equation}\label{eq: Rep u t}
    \Rep^{\widehat\unip}(G(F),\overline{\FF}_\ell)\subseteq\Rep^{\tame}(G(F),\overline{\FF}_\ell)\subseteq \Rep(G(F),\overline{\FF}_\ell)
\end{equation}
denote, respectively, the subcategories of unipotent representations and depth zero representations defined in \cite[Definition 4.105 and Definition 4.116]{ZTame}. The categories in \eqref{eq: Rep u t} carry standard $t$-structures.

Let $\Rep_\fg^\unip(G(F),\overline{\FF}_\ell)\subseteq \Rep^{\widehat\unip}(G(F),\overline{\FF}_\ell)$ denote the subcategory of finitely generated unipotent representations, as defined in \cite[Definition 4.116]{ZTame}. By \cite[Remark 4.117]{ZTame} we have
\[
    \Rep^{\widehat\unip}(G(F),\overline{\FF}_\ell)^\omega\subseteq \Rep_\fg^\unip(G(F),\overline{\FF}_\ell).
\]
By \cite[Proposition 4.122]{ZTame}, the subcategory $\Rep^{\unip}_\fg(G(F),\overline{\FF}_\ell)$ is stable under the duality \eqref{eq: D_coh^fg}. Hence there is a natural duality
\[
    \DD_\coh^\fg\colon \Rep_\fg^\unip(G(F),\overline{\FF}_\ell)^\op\xrightarrow{\sim} \Rep_\fg^\unip(G(F),\overline{\FF}_\ell).
\]
Let $\Rep_c^{\unip}(G(\FF_q),\overline{\FF}_\ell)$ denote the category defined in \cite[Definition 4.86]{ZTame}, which embeds fully faithfully as a subcategory
\begin{equation}\label{eq: u' in u}
    \Rep_c^{\unip}(G(\FF_q),\overline{\FF}_\ell)\subseteq \Rep^{\widehat\unip}_c(G(\FF_q),\overline{\FF}_\ell).
\end{equation}
The category $\Rep_c^{\unip}(G(\FF_q),\overline{\FF}_\ell)$ is stable under the duality $V\mapsto V^\vee$.
By definition, the composition functor
\begin{equation}\label{eq: cInd infl}
    \Rep_c(G(\FF_q),\overline{\FF}_\ell)\to \Rep_c(G(O_F),\overline{\FF}_\ell)\xrightarrow{\cInd_{G(O_F)}^{G(F)}} \Rep_\fg(G(F),\overline{\FF}_\ell)
\end{equation}
sends the subcategory $\Rep^{\unip}_c(G(\FF_q),\overline{\FF}_\ell)$ to the subcategory $\Rep^{\unip}_\fg(G(F),\overline{\FF}_\ell)$. In a mild abuse of notation, we denote by 
\[
    \cInd_{G(O_F)}^{G(F)}\colon \Rep^{\unip}_c(G(\FF_q),\overline{\FF}_\ell)\to \Rep^{\unip}_\fg(G(F),\overline{\FF}_\ell).
\]
the resulting functor. We note that \eqref{eq: cInd infl} always sends the subcategory $\Rep(G(\FF_q),\overline{\FF}_\ell)^\omega$ to $\Rep^{\tame}(G(F),\overline{\FF}_\ell)^\omega$ and the subcategory $\Rep^{\widehat\unip}(G(\FF_q),\overline{\FF}_\ell)^\omega$ to $\Rep^{\widehat\unip}(G(F),\overline{\FF}_\ell)^\omega$.

We now state the geometric input needed for the tame categorical local Langlands correspondence. Recall that we have fixed a pinning on $G$ and a non-trivial character $\psi\colon\FF_q\to\overline{\FF}_\ell^\times$.

\begin{assump}\label{assump: local geom Langlands}
    Assume that $\ell\nmid \pi_1(\hat{G})_{\mathrm{tor}}$. We assume the following statements concerning the tame geometric local Langlands correspondence.
    \begin{enumerate}[(a)]
        \item \cite[Theorem 5.1.(1)(2)(4)(5)]{ZTame} holds with $\Lambda=\overline{\FF}_\ell$. Namely, there exist equivalences of monoidal categories
        \begin{equation}\label{eq: B_G tame}
        \BB_{G,\psi}^\tame\colon \Shv_\mon(\mathrm{Iw}^u\backslash LG/\mathrm{Iw}^u,\overline{\FF}_\ell)\simeq \Ind\Coh(\LS_{\hat{B},\breve{F}}^\tame\times_{\LS_{\hat{G},\breve{F}}^\tame}\LS_{\hat{B},\breve{F}}^\tame)
        \end{equation}
        \begin{equation}\label{eq: B_G hatunip}
        \BB_{G,\psi}^{\widehat{\unip}}\colon \Shv_{u-\mon}(\mathrm{Iw}^u\backslash LG/\mathrm{Iw}^u,\overline{\FF}_\ell)\simeq \Ind\Coh(\LS_{\hat{B},\breve{F}}^{\widehat\unip}\times_{\LS_{\hat{G},\breve{F}}^{\widehat\unip}}\LS_{\hat{B},\breve{F}}^{\widehat\unip})
        \end{equation}
        \begin{equation}\label{eq: B_G unip}
        \BB_{G,\psi}^\unip\colon \Ind\Shv_\fg(\mathrm{Iw}\backslash LG/\mathrm{Iw},\overline{\FF}_\ell)\simeq \Ind\Coh(\LS_{\hat{B},\breve{F}}^\unip\times^L_{\LS_{\hat{G},\breve{F}}^\tame}\LS_{\hat{B},\breve{F}}^\unip)
        \end{equation}
        with the compatibilities specified in the cited theorem. For the notation, see \cite[\S 2.3 and \S 4.2]{ZTame}.
        \item The equivalence \eqref{eq: B_G tame} (resp. \eqref{eq: B_G hatunip}) is compatible with the actions of $\Shv_\mon(T,\Lambda)\simeq\Ind\Coh(R_{\hat{T}})$ (resp. $\Shv_{u-\mon}(T,\Lambda)\simeq\Ind\Coh(R_{\hat{T},\hat{u}})$ action) on both sides. See \cite[Proposition 4.32]{ZTame} for the notation.
        \item Let $\mathcal{Z}^\tame(V)$ be the central sheaf in \cite[Theorem 5.1.(2)]{ZTame} where $V$ is any tilting $\hat{G}$-representation. Then the object
        \[
            \mathcal{Z}^\tame(V)\star^u \widetilde{\Delta}^{\mon,\psi}_{w_0}\in\Shv_\mon(\mathrm{Iw}^u\backslash LG/(\mathrm{Iw}^u,\psi),\overline{\FF}_\ell)
        \]
        in \cite[Theorem 4.136]{ZTame} is cofree tilting. We assume the analogous statement in the unipotent monodromic setting.
    \end{enumerate}
\end{assump}
\begin{rmk}\label{rmk: on assump local geom Langlands}
    We summarize the status of Assumption \ref{assump: local geom Langlands}. If $\ell$ is larger than the Coxeter number of any simple factors of $G$, and $\ell\neq 19$ (resp. $\ell\neq 31$) when $G$ has a simple factor of type $E_7$ (resp. $E_8$), then the equivalences \eqref{eq: B_G hatunip} and \eqref{eq: B_G unip} are constructed in \cite{BR-modular-two-affine-Hecke}. Moreover, these equivalences satisfy \cite[Theorem  5.1.(2)]{ZTame} by \cite[Lemma 11.3]{BR-modular-two-affine-Hecke}. The unipotent monodromic case of (b) is also proved in \cite{BR-modular-two-affine-Hecke}, and the unipotent monodromic version of (c)  is proved in \cite[Proposition 7.9]{BR-modular-affine-Hecke}. We note that for our applications to commuting schemes, we only need the unipotent monodromic version.

    In general, Assumption \ref{assump: local geom Langlands} will be proved in a work in progress of Xinyu Li and Jiahao Niu.
\end{rmk}

We recall the following constructions in \cite{ZTame}.
\begin{thm}\label{thm: cllc}
    Suppose that Assumption \ref{assump: local geom Langlands} holds.
    \begin{enumerate}
        \item There is a fully faithful functor
    \[
        \LL_{G,\psi,1}^{\tame}\colon \Rep^{\tame}(G(F),\overline{\FF}_\ell)\hookrightarrow  \Ind\Coh(\LS^\tame_{\hat{G}}). 
    \]
    that restricts to a fully faithful embedding
    \begin{equation}\label{eq: L hat unip}
        \LL_{G,\psi,1}^{\widehat\unip}\colon \Rep^{\widehat\unip}(G(F),\overline{\FF}_\ell)\hookrightarrow \Ind\Coh(\LS^{\widehat\unip}_{\hat{G}}). 
    \end{equation} 
    Moreover, the functor $\LL_{G,\psi,1}^{\tame}$ preserves compact objects.
        \item We have
        \[\LL^{\tame}_{G,\psi,1}(\cInd_{G(O_F)}^{G(F)}\Gamma_\psi)=\cO_{\LS^\tame_{\hat{G}}}.\]
        \item The functor $\LL_{G,\psi,1}^{\widehat\unip}|_{\Rep^{\widehat\unip}(G(F),\overline{\FF}_\ell)^\omega}$ can be extended to a fully faithful embedding
    \begin{equation}\label{eq: L ufg}
        \LL_{G,\psi,1}^{\unip,\fg}\colon \Rep^{\unip}_\fg(G(F),\overline{\FF}_\ell)\to \Coh(\LS^{\widehat\unip}_{\hat{G}}). 
    \end{equation}
        such that
        \begin{equation}\label{eq: L^unip }
             \LL_{G,\psi^{-1},1}^{\unip,\fg}\circ \DD_{\coh}^\fg\simeq \DD'_\GS\circ \LL_{G,\psi,1}^{\unip,\fg}.
         \end{equation}
    \end{enumerate}
\end{thm}
\begin{proof}
    By the proof of \cite[Theorem 5.4.(1)]{ZTame}, we have a fully faithful embedding
    \[
        \LL^{\tame}_{G,\psi}\colon \Shv^\tame(\Isoc_G,\overline{\FF}_\ell)\hookrightarrow\Ind\Coh(\LS^\tame_{\hat{G}})
    \]
    that restricts to a fully faithful embedding
    \[
        \LL^{\widehat\unip}_{G,\psi}\colon \Shv^{\widehat\unip}(\Isoc_G,\overline{\FF}_\ell)\hookrightarrow\Ind\Coh(\LS^{\widehat\unip}_{\hat{G}})
    \] 
    where the categories $\Shv^{\tame}(\Isoc_G,\Lambda),\Shv^{\widehat\unip}(\Isoc_G,\Lambda)$ are defined in \cite[Definition 4.106 and Definition 4.119]{ZTame}. In particular, there are fully faithful embeddings
    \[
        (i_1)_*\colon \Rep^{?}(G(F),\overline{\FF}_\ell)^\omega\hookrightarrow\Shv^{?}(\Isoc_G,\overline{\FF}_\ell)^\omega
    \]
    for $?\in\{\emptyset,\widehat\unip,\tame\}$ by \cite[Proposition 3.69]{ZTame}. We define the functors in (1) to be
    \[
        \LL^{\tame}_{G,\psi,1}\coloneqq\LL^{\tame}_{G,\psi}\circ (i_1)_*,\quad \LL^{\widehat\unip}_{G,\psi,1}\coloneqq \LL^{\widehat\unip}_{G,\psi}\circ (i_1)_*
    \]
    This proves (1). Then (2) follows from the proof of \cite[Theorem 5.4.(2)]{ZTame}.

    By the proof of \cite[Theorem 5.5]{ZTame}, the functor $\LL_{G,\psi}^{\widehat\unip}|_{\Shv^{\widehat\unip}(\Isoc_G,\overline{\FF}_\ell)^\omega}$ extends to a fully faithful embedding
    \[
        \LL^{\unip,\fg}_{G,\psi}\colon\Shv^\unip_\fg(\Isoc_G,\Lambda)\hookrightarrow\Coh(\LS^{\widehat\unip}_{\hat{G}}).
    \]
    for $\Shv^\unip_\fg(\Isoc_G,\Lambda)$ defined in \cite[Definition 4.119]{ZTame}.
    By \cite[Proposition 3.94]{ZTame}, there is a fully faithful embedding
    \[
        (i_1)_*^{\fg}\colon\Rep^{\unip}_\fg(G(F),\overline{\FF}_\ell)\hookrightarrow \Shv^\unip_\fg(\Isoc_G,\overline{\FF}_\ell).
    \]
    The functor in (3) is therefore defined by
    \[
        \LL_{G,\psi,1}^{\unip,\fg}\coloneqq \LL^{\unip,\fg}_{G,\psi}\circ (i_1)_*^\fg.
    \]
    The compatibility with dualities follows from \cite[Theorem 5.4.(4)]{ZTame} and \cite[Corollary 3.101]{ZTame}, using that $(i_1)^\fg_!\simeq (i_1)^\fg_*$ as $i_1$ is a closed embedding.
\end{proof}

\subsection{Langlands functor for finite groups}
\begin{defn}
    Suppose that Assumption \ref{assump: local geom Langlands} holds. Define the functors
    \begin{equation}\label{eq: F G}
        \cF_{G,\psi}=\LL^\tame_{G,\psi,1}\circ \cInd_{G(O_F)}^{G(F)} \colon \Rep(G(\FF_q),\overline{\FF}_\ell)\to \Ind\Coh(\LS^\tame_{\hat{G}}),
    \end{equation}
    \begin{equation}\label{eq: F hatunip}
        \cF_{G,\psi}^{\widehat\unip}=\LL^{\widehat\unip}_{G,\psi,1}\circ \cInd_{G(O_F)}^{G(F)} \colon \Rep^{\widehat\unip}(G(\FF_q),\overline{\FF}_\ell)\to \Ind\Coh(\LS^{\widehat\unip}_{\hat{G}}),
    \end{equation}
    and
    \begin{equation}\label{eq: F unip c}
         \cF_{G,\psi}^{\unip,c}=\LL^{\unip,\fg}_{G,\psi,1}\circ \cInd_{G(O_F)}^{G(F)} \colon \Rep_c^{\unip}(G(\FF_q),\overline{\FF}_\ell)\to \Coh(\LS^{\widehat\unip}_{\hat{G}}).
    \end{equation}
\end{defn}

By \eqref{eq: D cInd} and Theorem \ref{thm: cllc}.(3), we have
\begin{equation}\label{eq: D_GS F^c}
    \DD_{\GS}'(\cF_{G,\psi}^{\unip,c}(V))\simeq \cF_{G,\psi^{-1}}^{\unip,c}(V^\vee)
\end{equation}
for $V\in\Rep_c^\unip(G(\FF_q),\overline{\FF}_\ell)$.

We warn the reader that $\cF_{G,\psi}^{\unip,c}$ is \emph{not} the restriction of $\cF_{G,\psi}^{\widehat\unip}$ to the subcategory $\Rep_c^{\unip}(G(\FF_q),\overline{\FF}_\ell)\subseteq \Rep^{\widehat\unip}(G(\FF_q),\overline{\FF}_\ell)$. However, $\cF_{G,\psi}^{\unip,c}$ and $\cF_{G,\psi}^{\widehat\unip}$ agree when restricted to the category $\Rep^{\widehat\unip}(G(\FF_q),\overline{\FF}_\ell)^\omega$ of compact objects.

The category $\Rep(G(\FF_q),\overline{\FF}_\ell)$ is $\cO(\hat{C}^{[q]})$-linear by \cite[\S 5.2]{Eteve-Jordan}. On the spectral side, the morphism \eqref{eq: LS -> C^[q]} endows the category  $\Ind\Coh(\LS^\tame_{\hat{G}})$ with an $\cO(\hat{C}^{[q]})$-linear structure. 

\begin{prop}\label{prop: F_G linear}
    Suppose that Assumption \ref{assump: local geom Langlands} holds. The functor $\cF_{G,\psi}$ is $\cO(\hat{C}^{[q]})$-linear. Similarly, the functor $\cF_{G,\psi}^{\widehat\unip}$ is $\cO(\hat{C}^{[q]}_u)$-linear.
\end{prop}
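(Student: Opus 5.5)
Since $\cO(\hat{C}^{[q]})$ is a finite commutative algebra, it suffices to match, for each $f\in\cO(\hat{C}^{[q]})$, the two natural transformations of $\cF_{G,\psi}$ given by $f$ acting on the source and on the target. The plan is to reduce this to a single object. The Eteve--Jordan action on $\Rep(G(\FF_q),\overline{\FF}_\ell)$ is realized through the Gelfand--Graev representation: by \cite[Theorem 6.1.3]{Eteve-Jordan} we have $\cO(\hat{C}^{[q]})\simeq\End_{G(\FF_q)}(\Gamma_\psi)$. Via the Deligne--Lusztig / Bernstein-center construction of \cite[\S 5.2]{Eteve-Jordan}, the action on $\Rep(G(\FF_q),\overline{\FF}_\ell)$ is the one for which this map is an isomorphism onto the endomorphisms of $\Gamma_\psi$. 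On the spectral side, $\Ind\Coh(\LS^\tame_{\hat{G}})$ is linear over $\cO(\LS^\tame_{\hat{G}})$, and in particular over $\cO(\hat{C}^{[q]})$ via \eqref{eq: LS -> C^[q]}. By Theorem \ref{thm: cllc}.(2), $\cF_{G,\psi}(\Gamma_\psi)=\cO_{\LS^\tame_{\hat G}}$, and since $\cF_{G,\psi}$ is fully faithful on the relevant part, it induces a ring map $\End(\Gamma_\psi)\to \End(\cO_{\LS^\tame_{\hat{G}}})=\cO(\LS^\tame_{\hat{G}})$.

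The first key step is to show that this induced map agrees with the pullback $\cO(\hat{C}^{[q]})\to\cO(\LS^\tame_{\hat{G}})$. I would argue this via the construction of $\LL^\tame$ in \cite{ZTame}. The action of $\cO(\hat{C}^{[q]})$ on the automorphic side comes from the Deligne--Lusztig / central-sheaf description. Zhu's construction is compatible with the actions of the (monodromic) affine Hecke category, in particular with central sheaves $\mathcal{Z}^\tame(V)$ and the torus action of Assumption \ref{assump: local geom Langlands}(b). Under the Chevalley quotient, $\cO(\hat{C})=\cO(\hat{T})^W$ is recovered from traces of central objects (Wilson/Deligne--Lusztig operators $\Tr(V)$). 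On the spectral side, these act on $\cO_{\LS}$ by the function $(x,y)\mapsto \mathrm{tr}(y\mid V)$, which is the pullback along $\LS\to\hat{C}$. Applying Frobenius reciprocity and the description \eqref{eq: Hom R_e, Gamma}, one checks that the two actions agree on the generators $\mathrm{tr}(V)$ of $\cO(\hat{C})$, and hence on the quotient $\cO(\hat{C}^{[q]})$.

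The second step is to upgrade from the generator to all objects. The category $\Rep(G(\FF_q),\overline{\FF}_\ell)$ is not generated by $\Gamma_\psi$ alone. However, both the source and target actions are by central endomorphisms of the identity functor. The $\cO(\hat{C}^{[q]})$-action on the source is given by the Bernstein center of $\Rep(G(F))$ restricted along $\cInd$ (the depth-zero center maps onto $\End(\Gamma_\psi)$). On the target, $\LL^\tame$ is constructed as a functor linear over $\cO(\LS^\tame_{\hat{G}})$, which acts as the center through the spectral Bernstein center \cite[Theorem 5.4]{ZTame}. Hence $\cF_{G,\psi}$ intertwines the center $Z(\Rep^\tame(G(F)))\simeq\cO(\LS^\tame_{\hat{G}})$ with the center of $\Rep(G(\FF_q))$ via $\cInd$, and linearity for all objects follows from linearity on the center maps. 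Step 1 is what identifies these central maps.

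The unipotent statement then follows by restricting to the direct summand indexed by $\hat{C}^{[q]}_u$ and using \eqref{eq: End Gamma_psi^unip}.

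The main obstacle is Step 1: matching the Eteve--Jordan action, defined via Deligne--Lusztig induction, with Zhu's spectral action. I would first try to handle it by using that both actions are determined by their values on $\End(\Gamma_\psi)$. On the source side this is \cite[Theorem 6.1.3]{Eteve-Jordan}; on the target side it holds because $\cO_{\LS}$ has endomorphism ring $\cO(\LS)$. This reduces the problem to a compatibility of ring maps $\cO(\hat{C}^{[q]})\to\cO(\LS^\tame_{\hat{G}})$, which can be checked after restriction to $\hat{T}^{[q]}$ using Assumption \ref{assump: local geom Langlands}(b).
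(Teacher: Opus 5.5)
\textbf{There is a genuine gap: the reduction to the single object $\Gamma_\psi$ cannot work.}

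The correct ingredients are $\cF_{G,\psi}(\Gamma_\psi)\simeq\cO_{\LS^\tame_{\hat G}}$ and $\End(\Gamma_\psi)\simeq\cO(\hat C^{[q]})$. The problem is that linearity concerns all objects. The Eteve--Jordan action is a ring map $\cO(\hat C^{[q]})\to Z(\overline{\FF}_\ell[G(\FF_q)])$, and the restriction $Z(\overline{\FF}_\ell[G(\FF_q)])\to\End(\Gamma_\psi)$ is far from injective. The source has dimension equal to the number of conjugacy classes of $G(\FF_q)$, which is much larger than $\dim\cO(\hat C^{[q]})$ as soon as $G$ is not a torus. So your claim that ``both actions are determined by their values on $\End(\Gamma_\psi)$'' is false.

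In the same way, a natural endomorphism of $\cInd$ or of $\cF_{G,\psi}$ is determined by its value on the regular representation, not on its summand $\Gamma_\psi$. Your Step 2 tries to close this by asserting that $\cF_{G,\psi}$ intertwines the centre of $\Rep^\tame(G(F))$ with that of $\Rep(G(\FF_q))$ ``via $\cInd$''. That is essentially the statement to be proved. Moreover, restricting along $\cInd$ gives no ring map $Z(\Rep^\tame(G(F)))\to Z(\Rep(G(\FF_q)))$, since a central element acting on $\cInd V$ need not preserve $V$.

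Two further inaccuracies:
\begin{itemize}
\item $\cF_{G,\psi}$ is not fully faithful here: $\End(\cF_{G,\psi}(\Gamma_\psi))=\cO(\LS^\tame_{\hat G})$ is much bigger than $\End(\Gamma_\psi)$.
\item Step 1 misdescribes the mechanism. The Eteve--Jordan action is not built from central sheaves. Also, $\cZ^\tame(V)$ corresponds spectrally to tensoring with the vector bundle $\widetilde V$, not to multiplication by the function $\mathrm{tr}(y\mid V)$.
\end{itemize}
Even if all of this were repaired, matching natural transformations element by element would only give compatibility on $\pi_0$. It would not give the module-functor structure that later steps need, such as commuting $\cF$ with $-\otimes_{\cO(\hat C^{[q]}_u)}\cO(\hat C_a)$.

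\textbf{The missing idea is to prove linearity one level up, at the Hecke categories, and then use functoriality of categorical traces.}
\begin{itemize}
\item The Eteve--Jordan action is, by definition, the trace of the $\cO(\hat C)$-linear structure on $\Shv_\mon(U\backslash G/U,\overline{\FF}_\ell)$. This structure comes from $T\times T$-monodromy factoring through $\hat T\times_{\hat C}\hat T$, and its trace gives an action of $\Tr(\QCoh(\hat C),[q]^*)=\QCoh(\hat C^{[q]})$.
\item The functor $(i_1)_*\circ\cInd$ is the trace of the $\cO(\hat C)$-linear monoidal embedding of the finite monodromic Hecke category into the affine one, by \cite[Lemma 4.67]{ZTame}. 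Hence it is $\QCoh(\hat C^{[q]})$-linear for all objects at once.
\item $\LL^\tame_{G,\psi}$ is the trace of $\BB^\tame_{G,\psi}$, which is $\cO(\hat C)$-linear exactly by Assumption \ref{assump: local geom Langlands}(b). This is where (b) is used, not in a check after restricting to $\hat T^{[q]}$.
\item \cite[Proposition 8.81]{ZTame} identifies the trace-induced $\QCoh(\hat C^{[q]})$-action on $\Tr(\Ind\Coh(\LS_{\hat{B},\breve{F}}^\tame\times_{\LS_{\hat{G},\breve{F}}^\tame}\LS_{\hat{B},\breve{F}}^\tame),[q^{-1}]_*)$ with the action coming from $\LS^\tame_{\hat G}\to\hat C^{[q]}$.
\end{itemize}
Your final step, passing to the unipotent summand, is fine once this is in place.
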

\begin{proof}
    The $\cO(\hat{C}^{[q]})$-action is defined by taking the categorical trace of the $\cO(\hat{C})$-action on the monodromic finite Hecke category $\Shv_\mon(U\backslash G/U,\overline{\FF}_\ell)$ (see \cite[\S 4]{ZTame} for the definitions). More precisely, after choosing the generator $\tau$ of the tame inertia subgroup $I_F$, the $T\times T$-monodromic action defines an $\cO(\hat{T}\times\hat{T})$-action on the category $\Shv_\mon(U\backslash G/U,\overline{\FF}_\ell)$, which factors through $\cO(\hat{T}\times_{\hat{C}}\hat{T})$ by \cite[\S 3.5.3]{Eteve-Jordan}. Then $\Shv_\mon(U\backslash G/U,\overline{\FF}_\ell)$ is naturally a $\cO(\hat{C})$-linear monoidal category. Then the Frobenius categorical trace (\cite[Theorem 1.3.1]{Eteve2024FreeMonodromic} and \cite[Theorem 4.97]{ZTame})
    \[\Rep(G(\FF_q),\overline{\FF}_\ell)\simeq \mathrm{Tr}(\Shv_\mon(U\backslash G/U,\overline{\FF}_\ell),\sigma_*)\]
    carries an action of $\mathrm{Tr}(\QCoh(\hat{C}),[q]^*)=\QCoh(\hat{C}^{[q]})$ (\cite[Lemma 5.2.1]{Eteve-Jordan}), and hence is $\cO(\hat{C}^{[q]})$-linear.

    The same argument applies to the monodromic affine Hecke category $\Shv_\mon(\mathrm{Iw}^u\backslash LG/\mathrm{Iw}^u,\overline{\FF}_\ell)$ and implies that the categorical trace (\cite[Theorem 4.125]{ZTame})
    \begin{equation}\label{eq: Isoc=tr}
        \Shv^\tame(\Isoc_G,\overline{\FF}_\ell)\simeq \mathrm{Tr}(\Shv_\mon(\mathrm{Iw}^u\backslash LG/\mathrm{Iw}^u,\overline{\FF}_\ell),\sigma_*)
    \end{equation}
    is $\cO(\hat{C}^{[q]})$-linear. Moreover, the functor
    \begin{equation}\label{eq: i_1 cInd}
        (i_1)_*\circ \cInd_{G(O_F)}^{G(F)}\colon \Rep(G(\FF_q),\overline{\FF}_\ell)\to \Shv^\tame(\Isoc_G,\overline{\FF}_\ell)
    \end{equation}
    is identified with the categorical trace of the $\cO(\hat{C})$-linear monoidal functor
    \begin{equation}\label{eq: finite Hecke in affine Hecke}
        \Shv_\mon(U\backslash G/U,\overline{\FF}_\ell)\simeq\Shv_\mon(\mathrm{Iw}^u\backslash L^+G/\mathrm{Iw}^u,\overline{\FF}_\ell)\hookrightarrow\Shv_\mon(\mathrm{Iw}^u\backslash LG/\mathrm{Iw}^u,\overline{\FF}_\ell)
    \end{equation}        
    by \cite[Lemma 4.67]{ZTame}. Therefore \eqref{eq: i_1 cInd} is $\cO(\hat{C}^{[q]})$-linear.

    Now the fully faithful embedding 
    \[
        \LL_{G,\psi}^\tame\colon\Shv^\tame(\Isoc_G,\overline{\FF}_\ell)\hookrightarrow\Ind\Coh(\LS^\tame_{\hat{G}})
    \]
    is defined by taking the Frobenius categorical trace of \eqref{eq: B_G tame}. After choosing $\tau$, we can identify $\LS^\tame_{\hat{G},\breve{F}}$ with the completion of $\hat{G}/\hat{G}$ along fibers of $\hat{G}/\hat{G}\to\hat{C}$ at points in $\hat{C}(\overline{\FF}_\ell)$ that can be lifted to a point of $\hat{T}(\overline{\FF}_\ell)$ of order prime to $\ell q$, and we can identify $\LS^\tame_{\hat{B},\breve{F}}$ with the completion of $\hat{B}/\hat{B}$ along fibers of $\hat{G}/\hat{G}\to\hat{C}$ at points in $\hat{T}(\overline{\FF}_\ell)$ of order prime to $\ell q$. In particular, there is a natural map
    \[
        \LS_{\hat{B},\breve{F}}^\tame\times_{\LS_{\hat{G},\breve{F}}^\tame}\LS_{\hat{B},\breve{F}}^\tame\to \hat{T}\times_{\hat{C}}\hat{T}
    \]
    and hence the category $\Ind\Coh(\LS_{\hat{B},\breve{F}}^\tame\times_{\LS_{\hat{G},\breve{F}}^\tame}\LS_{\hat{B},\breve{F}}^\tame)$ is $\cO(\hat{C})$-linear monoidal. The equivalence \eqref{eq: B_G tame} is naturally $\cO(\hat{C})$-linear as monoidal categories by Assumption \ref{assump: local geom Langlands}.(b). The $q$-power map $[q]$ is invertible on $\LS^\tame_{\hat{G},\breve{F}}$ and $\LS^\tame_{\hat{B},\breve{F}}$ with inverse $[q^{-1}]$.
    
    Under the fully faithful embedding (\cite[Theorem 2.86]{ZTame})
    \begin{equation}\label{eq: LS^tame tr}
        \mathrm{Tr}(\Ind\Coh(\LS_{\hat{B},\breve{F}}^\tame\times_{\LS_{\hat{G},\breve{F}}^\tame}\LS_{\hat{B},\breve{F}}^\tame),[q^{-1}]_*)\hookrightarrow \Ind\Coh(\LS_{\hat{G}}^\tame),
    \end{equation}
    the $\QCoh(\hat{C}^{[q]})$-action on the left defined by categorical trace is compatible with the $\QCoh(\hat{C}^{[q]})$-action on the right induced by \eqref{eq: LS -> C^[q]} by \cite[Proposition 8.81]{ZTame}. Recall that the functor $\LL_{G,\psi}^\tame$ is induced by the equivalence
    \[
    \mathrm{Tr}(\BB_{G,\psi}^\tame)\colon \mathrm{Tr}(\Shv_\mon(\mathrm{Iw}^u\backslash LG/\mathrm{Iw}^u,\overline{\FF}_\ell),\sigma_*)\simeq  \mathrm{Tr}(\Ind\Coh(\LS_{\hat{B},\breve{F}}^\tame\times_{\LS_{\hat{G},\breve{F}}^\tame}\LS_{\hat{B},\breve{F}}^\tame),[q^{-1}]_*)
    \]
    on categorical traces. Therefore $\LL_{G,\psi}^\tame$ is $\cO(\hat{C}^{[q]})$-linear. Now it follows that $\cF_{G,\psi}$ is $\cO(\hat{C}^{[q]})$-linear. The unipotent case follows by taking the unipotent connected component in $\hat{C}^{[q]}$.
\end{proof}

\subsection{\texorpdfstring{$t$}{t}-exactness of the finite Langlands functor}
Assume that $\ell\nmid |\pi_1(\hat{G})_\mathrm{tor}|$. Let $f\colon \LS^\tame_{\hat{G}}\to \BB\hat{G}$ denote the projection map. Let 
\begin{equation}\label{eq: f_* IndPerf}
    f_*^{\Ind\Perf}\colon \Ind\Perf(\LS^\tame_{\hat{G}})\to \Ind\Perf(\BB\hat{G})
\end{equation}
denote the continuous right adjoint of the functor $f^*\colon \Ind\Perf(\BB\hat{G})\to \Ind\Perf(\LS^\tame_{\hat{G}})$. By \cite[Theorem VIII.5.2]{FS21} and \cite[Theorem 4.7.4.5]{Lurie-HA}, there is a natural equivalence of categories
\[
    \Ind\Perf(\LS^\tame_{\hat{G}})\simeq \Mod_{f_*^{\Ind\Perf}\cO_{\LS^\tame_{\hat{G}}}}(\Ind\Perf(\BB \hat{G})).
\]
Recall that $\Ind\Perf(\BB\hat{G})$ admits two natural $t$-structures: the standard $t$-structure and the good filtration $t$-structure (\cite[Definition VIII.5.3]{FS21}). By \cite[Theorem VIII.5.2 and Corollary VIII.5.7]{FS21}, we know that $f_*^{\Ind\Perf}\cO_{\LS^\tame_{\hat{G}}}$ lies in the hearts of both $t$-structures. Indeed, $f_*^{\Ind\Perf}\cO_{\LS^\tame_{\hat{G}}}$ is clearly coconnective for the standard $t$-structure, and is connective for the good filtration $t$-structure by \emph{loc. cit.} Then the claim follows as the connective part of the good filtration $t$-structure is contained in the connective part of the standard $t$-structure. 

It follows that $\Ind\Perf(\LS^\tame_{\hat{G}})$ also carries a standard $t$-structure and a good filtration $t$-structure such that the functor \eqref{eq: f_* IndPerf} is $t$-exact for both $t$-structures.

Recall that there is a natural functor 
\begin{equation}\label{eq: Coh->IndPerf}
    \Coh(\LS^\tame_{\hat{G}})\hookrightarrow\Ind\Coh(\LS^\tame_{\hat{G}})\xrightarrow{\Psi}\Ind\Perf(\LS^\tame_{\hat{G}})
\end{equation}
where $\Psi$ is the continuous right adjoint of the fully faithful embedding 
\[\Psi^L\colon \Ind\Perf(\LS^\tame_{\hat{G}})\hookrightarrow \Ind\Coh(\LS^\tame_{\hat{G}})\] 
defined to be the ind-completion of $\Perf(\LS^\tame_{\hat{G}})\subseteq\Coh(\LS^\tame_{\hat{G}})$.

\begin{lemma}\label{lemma: Coh->IndPerf t-exact}
    Assume that $\ell\nmid |\pi_1(\hat{G})_\mathrm{tor}|$.
    The functor \eqref{eq: Coh->IndPerf} is fully faithful and $t$-exact when both sides are endowed with the standard $t$-structures.
\end{lemma}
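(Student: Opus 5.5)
Here $\LS^\tame_{\hat{G}}$ is a quotient stack $X/\hat{G}$ with $X=\Comm^q_{\hat{G}}$ (or a disjoint union/completion of such pieces) an affine scheme. The plan is to pass through the pushforward $f_*^{\Ind\Perf}$ to $\BB\hat{G}$, where both sides can be compared as modules over the algebra $A\coloneqq f_*^{\Ind\Perf}\cO_{\LS^\tame_{\hat{G}}}=\cO(X)$.

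\textbf{Full faithfulness.} For $\mathcal{F},\mathcal{G}\in\Coh(\LS^\tame_{\hat{G}})$, adjunction gives
\[
R\Hom(\Psi\mathcal{F},\Psi\mathcal{G})\simeq R\Hom(\Psi^L\Psi\mathcal{F},\mathcal{G}).
\]
So it suffices to show that the counit $\Psi^L\Psi\mathcal{F}\to\mathcal{F}$ is an isomorphism for $\mathcal{F}$ coherent, i.e.\ that $\mathcal{F}$ lies in the essential image of $\Ind\Perf$ inside $\Ind\Coh$. Since $X$ is affine, every coherent $\mathcal{F}$ has a left resolution by vector bundles of the form $\cO\otimes V$, with $V$ a finite-dimensional $\hat{G}$-representation. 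Hence $\mathcal{F}\simeq\operatorname{colim}_n \sigma_{\geq -n}P_\bullet$, a filtered colimit of perfect complexes, and this colimit is formed in $\Ind\Coh$. The key check is that it agrees with $\mathcal{F}$ in $\Ind\Coh$ and not merely in $\QCoh$. I would verify this by testing against compact objects $\mathcal{H}\in\Coh$: the cones $\mathcal{K}_n$ of the maps $\sigma_{\geq -n}P_\bullet\to\mathcal{F}$ are coherent and concentrated in degrees $\le -n$. Therefore $\Hom(\mathcal{H},\mathcal{K}_n)=0$ for $n\gg0$, because $\mathcal{H}$ is bounded and $\Hom$ out of a coherent sheaf on a quotient stack has bounded-above amplitude. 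So the colimit is $\mathcal{F}$. I expect this step to be the main obstacle: in positive characteristic $\hat{G}$ is not linearly reductive, so the global sections functor on $\BB\hat{G}$ has infinite cohomological dimension, and bounding $R\Hom$ out of $\mathcal{H}$ requires care. This is exactly where the hypothesis $\ell\nmid|\pi_1(\hat{G})_\mathrm{tor}|$ enters, through the good filtration results of \cite[Theorem VIII.5.2, Corollary VIII.5.7]{FS21}. These give that $A$ has a good filtration, so $R\Hom(\mathcal{H},-)$ is computed by $\hat{G}$-cohomology of modules with good filtration and is bounded below in a controlled way. I would try first to reduce to $\mathcal{H}=\cO\otimes V^\vee$ with $V$ tilting, using that perfect complexes built from tiltings generate $\Coh$ under finite colimits, retracts and truncations.

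\textbf{$t$-exactness.} Under the equivalence $\Ind\Perf(\LS^\tame_{\hat{G}})\simeq\Mod_A(\Ind\Perf(\BB\hat{G}))$, the standard $t$-structure is detected by $f_*^{\Ind\Perf}$. It therefore suffices to show that for $\mathcal{F}$ coherent in the heart, $f_*^{\Ind\Perf}\Psi\mathcal{F}$ lies in the standard heart of $\Ind\Perf(\BB\hat{G})$. Write $\Psi\mathcal{F}=\operatorname{colim}\sigma_{\geq -n}P_\bullet$ as above. Since $f_*^{\Ind\Perf}$ is continuous, the computation reduces to the forgetful image $\operatorname{colim}_n\sigma_{\geq-n}P_\bullet$ in $\Ind\Perf(\BB\hat{G})$, which is $\Ind(\Rep^{\mathrm{fd}}\hat{G})$. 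This colimit is the resolution of the $\cO(X)$-module $\Gamma(X,\mathcal{F})$. The standard $t$-structure on $\Ind(\Rep^{\mathrm{fd}}\hat{G})$ is compatible with filtered colimits and has underlying cohomology computed termwise, so the colimit has cohomology $\Gamma(X,\mathcal{F})$ concentrated in degree $0$. This gives $t$-exactness for objects of the heart, and by boundedness and dévissage along truncation triangles it gives $t$-exactness on all of $\Coh$.
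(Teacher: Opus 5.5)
There is a genuine gap, and it sits in the step you flagged, but for a different reason than the one you expect.

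\textbf{Full faithfulness.} Your reduction $R\Hom(\Psi\mathcal{F},\Psi\mathcal{G})\simeq R\Hom(\Psi^L\Psi\mathcal{F},\mathcal{G})$ is fine. The sufficient condition you then aim for is false, however: that the counit $\Psi^L\Psi\mathcal{F}\to\mathcal{F}$ is an isomorphism, i.e.\ that $\mathcal{F}$ lies in the essential image of $\Ind\Perf$ inside $\Ind\Coh$. It fails for every coherent $\mathcal{F}$ that is not perfect.

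\begin{itemize}
\item The essential image of $\Psi^L$ is the localizing subcategory generated by $\Perf(\LS^\tame_{\hat{G}})$, and its compact objects are exactly the perfect complexes.
\item A coherent $\mathcal{F}$ is compact in $\Ind\Coh$, so if it lay in that image it would be perfect.
\item Since $\LS^\tame_{\hat{G}}$ is quasi-smooth but not smooth, $\Coh\neq\Perf$.
\end{itemize}

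The faulty justification is the claim that $\Hom(\mathcal{H},\mathcal{K}_n)=0$ for $n\gg 0$ because $\mathcal{H}$ is bounded. These are Ext groups of degree growing with $n$, and nothing forces them to vanish unless $\mathcal{H}$ has finite Tor-amplitude. Already on $\Spec k[\epsilon]/\epsilon^2$ with $\mathcal{F}=\mathcal{H}=k$:
\begin{itemize}
\item $\mathcal{K}_n\simeq k[n+1]$;
\item the transition maps are multiplication by the generator $u\in\mathrm{Ext}^1(k,k)$;
\item $\operatorname{colim}_n\mathrm{Ext}^{n+1+i}(k,k)\simeq k\neq 0$ for every $i$.
\end{itemize}
So no good-filtration or tilting input can repair this. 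The obstruction comes from the singularities of the stack, not from $\hat{G}$ failing to be linearly reductive.

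\textbf{$t$-exactness.} Your argument inherits the problem. It starts from $\Psi\mathcal{F}\simeq\operatorname{colim}_n\sigma_{\geq -n}P_\bullet$, which you obtained from the false identification in $\Ind\Coh$. To justify it directly in $\Ind\Perf$, you would need to control colimits of high $\hat{G}$-Ext groups of the syzygies. You would need the same control to upgrade ``cohomology only in degree $0$'' to ``lies in the heart''. That is exactly the unbounded cohomology you were worried about.

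\textbf{What is missing.} The key fact is that $\Psi$ is an equivalence on coconnective parts. The paper identifies both sides with $\QCoh(\LS^\tame_{\hat{G}})^{\geq -n}$:
\begin{itemize}
\item $\Ind\Coh(\LS^\tame_{\hat{G}})^{\geq -n}\simeq\QCoh(\LS^\tame_{\hat{G}})^{\geq -n}$ by \cite[Lemma 9.21]{ZTame};
\item $\Ind\Perf(\LS^\tame_{\hat{G}})^{\geq -n}\simeq\QCoh(\LS^\tame_{\hat{G}})^{\geq -n}$ by monadicity of $f^*\dashv f_*^{\Ind\Perf}$ on coconnective parts, together with $\Ind\Perf(\BB\hat{G})^{\geq -n}\simeq\QCoh(\BB\hat{G})^{\geq -n}$.
\end{itemize}
Coherent objects are bounded, so full faithfulness follows.

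With this in hand, your own reduction can be completed. The cofiber of $\Psi^L\Psi\mathcal{F}\to\mathcal{F}$ lies in $\ker\Psi$. Since $\Psi$ is $t$-exact and an equivalence on bounded-below parts, that cofiber is infinitely connective, so it has no nonzero maps to bounded-below $\mathcal{G}$.

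For $t$-exactness the paper avoids resolutions entirely. It uses $f_*^{\Ind\Perf}\circ\Psi=f_*^{\Ind\Coh}$. By \cite[Lemma 9.26]{ZTame}, $f_*^{\Ind\Coh}$ restricted to $\Coh$ is the naive pushforward into $\QCoh(\BB\hat{G})^+\simeq\Ind\Coh(\BB\hat{G})^+$. This is $t$-exact because $f$ is affine. Since the $t$-structure on $\Ind\Perf(\LS^\tame_{\hat{G}})$ is detected by $f_*^{\Ind\Perf}$, $\Psi$ is $t$-exact.
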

\begin{proof}
    All the $t$-structures in this proof refer to the standard $t$-structures.
    We first show that \eqref{eq: Coh->IndPerf} is $t$-exact. Let $f\colon \LS^\tame_{\hat{G}}\to \BB\hat{G}$ denote the projection. By \cite[Lemma 9.26]{ZTame}, the functor
    \[
        f_*^{\Ind\Coh}\colon \Ind\Coh(\LS^\tame_{\hat{G}})\to \Ind\Coh(\BB\hat{G})\simeq\Ind\Perf(\BB\hat{G})
    \]
    is defined by ind-completing the functor
    \[
        \Coh(\LS^\tame_{\hat{G}})\subseteq \QCoh(\LS^\tame_{\hat{G}})^+\xrightarrow{f_*}\QCoh(\BB\hat{G})^+\simeq \Ind\Coh(\BB\hat{G})^+,
    \]
    which is $t$-exact as $f$ is affine. Here $(-)^+$ denotes the subcategory of bounded-below objects. Therefore $f_*^{\Ind\Coh}$ is $t$-exact. We have $f_*^{\Ind\Coh}= f_*^{\Ind\Perf}\circ\Psi$, and the functor $f_*^{\Ind\Perf}$ is conservative and $t$-exact. Therefore $\Psi\colon\Ind\Coh(\LS^\tame_{\hat{G}})\to \Ind\Perf(\LS^\tame_{\hat{G}})$ is $t$-exact.

    We claim that for any $n\geq 0$, the functor
    \[
        \Psi^{\geq -n}\colon \Ind\Coh(\LS^\tame_{\hat{G}})^{\geq -n}\to \Ind\Perf(\LS^\tame_{\hat{G}})^{\geq -n}
    \]
    is fully faithful. The adjunction
    \[
        f^*\colon \Ind\Perf(\BB\hat{G})^{\geq -n}\leftrightarrows\Ind\Perf(\LS^\tame_{\hat{G}})^{\geq -n}\colon f_*^{\Ind\Perf}
    \]
    is monadic by \cite[Theorem 4.7.4.5]{Lurie-HA} as $f_*$ commutes with colimits and is conservative. Therefore 
    \[
        \Ind\Perf(\LS^\tame_{\hat{G}})^{\geq -n}\simeq \Mod_{T}(\Ind\Perf(\BB\hat{G})^{\geq -n})\simeq \Mod_{T}(\QCoh(\BB\hat{G})^{\geq -n})\simeq \QCoh(\LS^\tame_{\hat{G}})^{\geq -n},
    \]
    where $T$ is the monad given by $f_*^{\Ind\Perf}\cO_{\LS^\tame_{\hat{G}}}$. We see that
    \[
        \Ind\Coh(\LS^\tame_{\hat{G}})^{\geq -n}\simeq \Ind\Perf(\LS^\tame_{\hat{G}})^{\geq -n}\simeq  \QCoh(\LS^\tame_{\hat{G}})^{\geq -n}
    \]
    by \cite[Lemma 9.21]{ZTame}. In particular, the functor \eqref{eq: Coh->IndPerf} is fully faithful.
\end{proof}

\begin{cor}\label{cor: F^fg linear}
    Suppose that Assumption \ref{assump: local geom Langlands} holds.
    The functor $\cF_{G,\psi}^{\unip,c}\colon \Rep^{\unip}_c(G(\FF_q),\overline{\FF}_\ell)\to \Coh(\LS^{\widehat\unip}_{\hat{G}})$ is $\cO(\hat{C}^{[q]}_u)$-linear.
\end{cor}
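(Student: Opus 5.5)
We plan to deduce the linearity of $\cF_{G,\psi}^{\unip,c}$ from Proposition \ref{prop: F_G linear} together with the full faithfulness in Lemma \ref{lemma: Coh->IndPerf t-exact}. The point is that $\cF_{G,\psi}^{\unip,c}$ is not literally a restriction of $\cF_{G,\psi}^{\widehat\unip}$. Both functors are, however, built from the same trace construction applied to compact objects, and $\cF_{G,\psi}^{\unip,c}$ is obtained from the compact part by a canonical extension procedure which commutes with any natural endomorphisms.

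\emph{Step 1 (compact objects).} On $\Rep^{\widehat\unip}(G(\FF_q),\overline{\FF}_\ell)^\omega$, the functor $\cF_{G,\psi}^{\unip,c}$ agrees with $\cF^{\widehat\unip}_{G,\psi}$, as remarked after its definition. Proposition \ref{prop: F_G linear} then gives the $\cO(\hat{C}^{[q]}_u)$-linearity there. Concretely, for each $a\in\cO(\hat{C}^{[q]}_u)$ we have natural transformations $a_V\colon V\to V$ on the source and $a_\mathcal{F}$ on the target, and the two are intertwined compatibly with the algebra structure.

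\emph{Step 2 (passing to $\Rep^\unip_c$).} Recall that $\LL^{\unip,\fg}_{G,\psi,1}$ is constructed in the proof of \cite[Theorem 5.5]{ZTame} by extending $\LL^{\widehat\unip}_{G,\psi}|_{\omega}$. Using Lemma \ref{lemma: Coh->IndPerf t-exact}, we compose with the fully faithful functor $\Psi\colon\Coh(\LS^{\widehat\unip}_{\hat{G}})\hookrightarrow\Ind\Perf(\LS^{\widehat\unip}_{\hat{G}})$. This reduces the claim to the composite $\Psi\circ\cF^{\unip,c}_{G,\psi}$, because an $\cO(\hat{C}^{[q]}_u)$-action on a fully faithful image lifts uniquely. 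The composite should be expressible through the continuous functor $\Psi\circ\LL^{\widehat\unip}_{G,\psi}$ on ind-completions. Indeed, objects of $\Rep^\unip_\fg$ (respectively their images under $(i_1)^\fg_*$) are filtered colimits of compact objects in $\Shv^{\widehat\unip}(\Isoc_G)$, and the extension $\LL^{\unip,\fg}_{G,\psi}$ is characterized by $\Psi\circ\LL^{\unip,\fg}_{G,\psi}\simeq \Psi\circ\LL^{\widehat\unip}_{G,\psi}|_{\fg}$; equivalently, it is the ind-extension followed by $\Psi$. Both the ind-extension and $\Psi$ are $\cO(\hat{C}^{[q]}_u)$-linear. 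The linear structure on the target is induced by the map to $\hat{C}^{[q]}_u$, which acts through $\QCoh$ and commutes with $\Psi^L$, hence with its right adjoint $\Psi$. Meanwhile $\cInd$ and $(i_1)_*$ are linear by the trace description in \eqref{eq: i_1 cInd}.

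\emph{Step 3 (conclusion).} Since $\Psi$ is fully faithful on $\Coh$ and the actions on both sides are by natural transformations, the linearity of $\Psi\circ\cF^{\unip,c}_{G,\psi}$ transfers back to $\cF^{\unip,c}_{G,\psi}$.

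The main obstacle is Step 2. We must make precise that Zhu's extension $\LL^{\unip,\fg}$ agrees after applying $\Psi$ with the continuous functor $\LL^{\widehat\unip}$ evaluated on $\fg$ objects viewed in the big category. Alternatively, we need the extension to be characterized by a universal property (such as Ind-extension on $\Ind\Perf$ restricted to the $\fg$ subcategory), from which compatibility with natural endomorphisms follows formally. If this comparison is unavailable, the fallback is to use the duality \eqref{eq: D_GS F^c}. That duality realizes $\cF^{\unip,c}_{G,\psi}(V)$ as $\DD'_\GS$ of the image of $V^\vee$, and when $V^\vee$ is compact it is computed by the linear functor. Since $\DD'_\GS$ and $V\mapsto V^\vee$ are compatible with the $\cO(\hat{C}^{[q]}_u)$-actions up to the involution induced by the Chevalley involution and inversion (which cancel), this yields linearity on that generating class.
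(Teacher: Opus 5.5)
Your strategy is the paper's: compose with the fully faithful $\Psi$ of Lemma \ref{lemma: Coh->IndPerf t-exact}, then compare $\Psi\circ\LL^{\unip,\fg}_{G,\psi}$ with the continuous linear functor $\Psi\circ\LL^{\widehat\unip}_{G,\psi}$. But the comparison $\Psi\circ\LL^{\unip,\fg}_{G,\psi}\simeq\Psi\circ\LL^{\widehat\unip}_{G,\psi}|_{\Shv^\unip_\fg}$, on which everything rests, is only asserted as a ``characterization'' of Zhu's extension, and you leave it open. It does not follow from $\LL^{\unip,\fg}_{G,\psi}$ extending the compact part, nor from finitely generated objects being filtered colimits of compact ones. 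Indeed, $\LL^{\unip,\fg}_{G,\psi}$ does not commute with those colimits; this is exactly the paper's warning that $\cF^{\unip,c}_{G,\psi}$ is not a restriction of $\cF^{\widehat\unip}_{G,\psi}$.

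To compare $\Psi$ of the two sides by testing against $P\in\Perf(\LS^{\widehat\unip}_{\hat{G}})$, you need $P$ to come from a compact object, and that is the missing input. By \cite[Theorem 5.4.(1)]{ZTame} there is a chain of compact-preserving, fully faithful, continuous functors
\[
\Ind\Perf(\LS^{\widehat\unip}_{\hat{G}})\hookrightarrow\Shv^{\widehat\unip}(\Isoc_G,\overline{\FF}_\ell)\xhookrightarrow{\iota}\Ind\Shv^\unip_\fg(\Isoc_G,\overline{\FF}_\ell)\xhookrightarrow{\Ind\LL^{\unip,\fg}_{G,\psi}}\Ind\Coh(\LS^{\widehat\unip}_{\hat{G}})
\]
whose composite is $\Psi^L$. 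Passing to right adjoints gives $\Psi\circ\Ind\LL^{\unip,\fg}_{G,\psi}\simeq\Psi\circ\LL^{\widehat\unip}_{G,\psi}\circ\iota^R$. The right-hand side is linear: $\LL^{\widehat\unip}_{G,\psi}$ is linear by Proposition \ref{prop: F_G linear}, and $\iota^R$, $\Psi$ are right adjoints of linear functors over the rigid category $\QCoh(\hat{C}^{[q]}_u)$. Restricting to $\Shv^\unip_\fg(\Isoc_G,\overline{\FF}_\ell)$, where $\iota^R$ is just the inclusion into the big category, gives exactly your claimed identity. Equivalently, write $P\simeq\LL^{\widehat\unip}_{G,\psi}(Y)$ with $Y$ compact. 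Full faithfulness of both functors then gives $R\Hom(P,\LL^{\unip,\fg}_{G,\psi}(X))\simeq R\Hom(Y,X)\simeq R\Hom(P,\LL^{\widehat\unip}_{G,\psi}(X))$. With this supplied, your Steps 2 and 3 go through.

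Your fallback does not close the gap. Compact objects of $\Rep(G(\FF_q),\overline{\FF}_\ell)$ are stable under $V\mapsto V^\vee$, since the group algebra is self-injective. So the class of $V$ with $V^\vee$ compact is just the compact objects again, and you recover only Step 1. These objects also do not generate $\Rep^\unip_c(G(\FF_q),\overline{\FF}_\ell)$ as a thick subcategory: for instance, $R_1$ is not perfect over $\overline{\FF}_\ell[G(\FF_q)]$ as soon as $\ell$ divides $|B(\FF_q)|$, as in the case $\ell\mid q-1$ of interest. Linearity on compact objects therefore says nothing about the non-compact objects, which are the whole reason $\cF^{\unip,c}_{G,\psi}$ is introduced.
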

\begin{proof}
    By \cite[Theorem 5.4.(1)]{ZTame}, we have a chain of fully faithful continuous embeddings
    \[
        \Ind\Perf(\LS^{\widehat\unip}_{\hat{G}})\hookrightarrow \Shv^{\widehat\unip}(\Isoc_G,\overline{\FF}_\ell)\xhookrightarrow{\iota}\Ind\Shv^{\unip}_\fg(\Isoc_G,\overline{\FF}_\ell)\xhookrightarrow{\Ind\LL^{\unip,\fg}_{G,\psi}}\Ind\Coh(\LS^{\widehat\unip}_{\hat{G}})
    \]
    that preserve compact objects, where the second functor is the ind-completion of $\Shv^{\widehat\unip}(\Isoc_G,\overline{\FF}_\ell)^\omega\subseteq \Shv^\unip_\fg(\Isoc_G,\overline{\FF}_\ell)$. Passing to right adjoints, we see that the composition
    \begin{equation}\label{eq: 4 composition}
        \Ind\Shv_\fg^\unip(\Isoc_G,\overline{\FF}_\ell)\xrightarrow{\iota^R} \Shv^{\widehat\unip}(\Isoc_G,\overline{\FF}_\ell) \xrightarrow{\LL_{G,\psi}^{\widehat\unip}}\Ind\Coh(\LS^{\widehat\unip}_{\hat{G}})\xrightarrow{\Psi}\Ind\Perf(\LS^{\widehat\unip}_{\hat{G}})
    \end{equation}
    is identified with the composition
    \begin{equation}\label{eq: 3 composition}
        \Ind\Shv_\fg^\unip(\Isoc_G,\overline{\FF}_\ell)\xhookrightarrow{\Ind\LL_{G, \psi}^{\unip,\fg}}\Ind\Coh(\LS^{\widehat\unip}_{\hat{G}})\xrightarrow{\Psi}\Ind\Perf(\LS^{\widehat\unip}_{\hat{G}}).
    \end{equation}
    By Proposition \ref{prop: F_G linear}, the functor $\LL_{G,\psi}^{\widehat\unip}$ is $\QCoh(\hat{C}_u^{[q]})$-linear. As $\iota$ and $\Psi^L$ are $\QCoh(\hat{C}_u^{[q]})$-linear, the right adjoints $\iota^R$ and $\Psi$ are $\QCoh(\hat{C}_u^{[q]})$-linear by rigidity of $\QCoh(\hat{C}_u^{[q]})$. Therefore \eqref{eq: 4 composition} is $\QCoh(\hat{C}_u^{[q]})$-linear, and hence \eqref{eq: 3 composition} is $\QCoh(\hat{C}_u^{[q]})$-linear. In particular, the restriction of \eqref{eq: 3 composition} 
    \[
        \Shv_\fg^\unip(\Isoc_G,\overline{\FF}_\ell)\xhookrightarrow{\LL_{G,\psi}^{\unip,\fg}} \Coh(\LS^{\widehat\unip}_{\hat{G}})\xhookrightarrow{\Psi} \Ind\Perf(\LS^{\widehat\unip}_{\hat{G}})
    \]
    is $\Perf(C^{[q]}_u)$-linear, where the second functor is fully faithful by Lemma \ref{lemma: Coh->IndPerf t-exact}. Therefore $\LL^{\unip,\fg}_{G,\psi}$ is $\cO(\hat{C}^{[q]}_u)$-linear, and hence $\cF_{G,\psi}^{\unip,c}$ is $\cO(\hat{C}^{[q]}_u)$-linear.
\end{proof}

The following theorem is a generalization of \cite[Theorem 5.10]{ZTame} to modular coefficients. By \cite[Theorem 1.2]{BDR17}, if every elementary $\ell$-subgroup of $G(\FF_q)$ is contained in a torus, then \eqref{eq: u' in u} is an equivalence. In particular, the functor $\cF^{\unip,c}_{G,\psi}$ is defined on $\Rep_c^{\widehat\unip}(G(\FF_q),\overline{\FF}_\ell)$.

\begin{thm}\label{thm: t-exact CM}
    Suppose that Assumption \ref{assump: local geom Langlands} holds. Suppose every elementary $\ell$-subgroup of $G(\FF_q)$ is contained in a torus.  Then the following statements hold.
    \begin{enumerate}
        \item The functor 
        \[
            \cF_{G,\psi}^{\unip,c}\colon \Rep^{\widehat\unip}_c(G(\FF_q),\overline{\FF}_\ell)\to \Coh(\LS^{\widehat\unip}_{\hat{G}})
        \]
        is $t$-exact when both sides are endowed with the standard $t$-structures. 
        \item For any object $V\in \Rep_c^{\widehat\unip}(G(\FF_q),\overline{\FF}_\ell)^\heartsuit$, the associated coherent sheaf $\cF_{G,\psi}^{\unip,c}(V)$ is maximal Cohen--Macaulay.
    \end{enumerate}  
\end{thm}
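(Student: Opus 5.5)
The plan is to follow the strategy of \cite[Theorem 5.10]{ZTame}, replacing the semisimplicity arguments available in characteristic zero with the projective–injective generator $\widetilde{R}_1$ and the duality \eqref{eq: D_GS F^c}.

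\emph{Reduction.} Since the standard $t$-structure on $\Coh(\LS^{\widehat\unip}_{\hat{G}})$ is bounded and $\Rep_c^{\widehat\unip}(G(\FF_q),\overline{\FF}_\ell)$ has a bounded $t$-structure whose heart has finite length, part (1) reduces to two claims. First, for $V$ in the heart, $\cF^{\unip,c}_{G,\psi}(V)$ lies in $\Coh^{\le 0}$. Second, $\cF^{\unip,c}_{G,\psi}(V)$ lies in $\Coh^{\ge 0}$.

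\emph{Connectivity.} The functor is exact and the heart is generated under extensions by simple objects. Every simple object is a quotient of a compact projective, namely a summand of $\overline{\FF}_\ell[G(\FF_q)]$. Right $t$-exactness therefore follows once we know two things: compact projectives go to $\Coh^{\le 0}$, and the functor preserves this bound under cokernels. The latter is automatic, since a cofiber of objects in $\Coh^{\le 0}$ stays in $\Coh^{\le 0}$. By Lemma~\ref{lemma: Coh->IndPerf t-exact} we may test connectivity after applying $\Psi$ and then $f_*$ to $\BB\hat{G}$, or after restricting to the Springer-type atlas $\LS_{\hat{B}}$. For projectives $P$ we compute $\cF(P)$ through the trace description in Proposition~\ref{prop: F_G linear}. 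Concretely, the projective summand of $\Ind_{U(\FF_q)}^{G(\FF_q)}\overline{\FF}_\ell$ in the unipotent block is $\widetilde{R}_1$, and $\cInd$ of it corresponds under the trace of $\BB^{\widehat\unip}_{G,\psi}$ to the pushforward along the proper map $\LS^{\widehat\unip}_{\hat{B}}\to\LS^{\widehat\unip}_{\hat{G}}$ of a line bundle pulled back from $\hat{T}^{[q]}_u$. We would use that $\widetilde{R}_1$ generates the principal block (Proposition~\ref{prop: principle block generator}). For the other projectives we would use Assumption~\ref{assump: local geom Langlands}.(c): the tilting objects $\cZ(V)\star^u\widetilde\Delta^{\mon,\psi}_{w_0}$ are cofree tilting, so every indecomposable projective appears as a summand of the image of $\Gamma_\psi^{\widehat\unip}\otimes V$ for tilting $\hat{G}$-modules $V$. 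By Theorem~\ref{thm: cllc}.(2) the latter maps to $\cO_{\LS}\otimes V$, which lies in the heart. Here the hypothesis that elementary $\ell$-subgroups lie in tori enters through \cite{BDR17}: it identifies the unipotent block with the subcategory generated by Deligne--Lusztig–type objects, so that this generation statement holds.

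\emph{Coconnectivity and Cohen--Macaulayness.} For the bound $\Coh^{\ge 0}$ we use duality. By \eqref{eq: D_GS F^c}, $\DD'_\GS\cF_{G,\psi}(V)\simeq\cF_{G,\psi^{-1}}(V^\vee)$, and $V^\vee$ again lies in the heart. The stack $\LS^{\widehat\unip}_{\hat{G}}$ is a quasi-smooth lci of dimension $0$ (virtual dimension zero, classical by \cite{ZTame}), so its dualizing sheaf is a shifted line bundle placed in degree $0$. For such a stack the duality $\DD_\GS$ sends $\Coh^{\le 0}$ to $\Coh^{\ge 0}$. Applying right $t$-exactness for $\psi^{-1}$ to $V^\vee$ then gives $\cF(V)\in\Coh^{\ge 0}$, which completes (1).

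For (2), take $V$ in the heart. We now know that both $\cF(V)$ and $\DD_\GS\cF(V)$ are sheaves concentrated in degree $0$. On a Cohen--Macaulay stack of dimension $0$, this is precisely the local criterion for a coherent sheaf to be maximal Cohen--Macaulay: its local cohomology is concentrated in the top degree, equivalently $\mathcal{E}xt^i(\mathcal{F},\omega)=0$ for $i>0$.

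The main obstacle is the connectivity step for an arbitrary projective, in particular proving that the projective covers of non-principal unipotent simples map to sheaves rather than complexes. I would first try to express every projective in the unipotent block as a summand of $\Gamma_\psi^{\widehat\unip}\otimes$ (a central tilting action) by means of Assumption~(c). Failing that, I would fall back on showing that $\widetilde{R}_1$ together with the Gelfand--Graev summands generate the heart under quotients and extensions.
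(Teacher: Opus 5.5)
Your outer structure is fine. The reduction of (1) to the two bounds, the use of \eqref{eq: D_GS F^c} together with $\omega\simeq\cO$ to get coconnectivity from connectivity for $\psi^{-1}$, and the Cohen--Macaulay criterion for (2) all work. The gap is the step you flag yourself: that the projective generators go to $\Coh^{\le 0}$. That step is the whole content of the theorem, and neither mechanism you propose delivers it.

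\textbf{Assumption (c) is not a generation statement.} There is no action of $\Rep(\hat G)$ on $\Rep(G(\FF_q),\overline{\FF}_\ell)$, so ``$\Gamma_\psi^{\widehat\unip}\otimes V$'' is not an object of the source category. The objects $\cZ(V)\star^u\widetilde{\Delta}^{\mon,\psi}_{w_0}$ live in the \emph{affine} Whittaker category. Likewise, $\widetilde V\otimes\cO_{\LS}$ is $\LL$ of a $G(F)$-representation that is not of the form $\cInd_{G(O_F)}^{G(F)}P$.

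\textbf{The pushforward description does not give connectivity.} For $\widetilde R_1$, writing $\cF(\widetilde R_1)\simeq\CohSpr^{\widehat\unip}_{\hat G}$ as a proper pushforward gives no control of higher direct images along $\LS^{\widehat\unip}_{\hat B}\to\LS^{\widehat\unip}_{\hat G}$. That $\CohSpr$ lies in the heart is Proposition \ref{prop: CohSpr heart CM}, conjectured in \cite{Zhu2020Coherent}. The paper proves it by the same argument as this theorem, not by analyzing the pushforward.

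\textbf{The fallback fails too.} $\widetilde R_1$ and $\Gamma_\psi^{\widehat\unip}$ both lie in the summand $\Rep^0$, so they never reach the non-principal unipotent blocks, e.g.\ cuspidal ones. The elementary $\ell$-subgroup hypothesis plays no role in generation. It is used only to make \eqref{eq: u' in u} an equivalence, so that $\cF^{\unip,c}_{G,\psi}$ is defined on all of $\Rep^{\widehat\unip}_c$.

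\textbf{The missing idea} is to read (c) as an \emph{Ext-vanishing} statement and turn it into connectivity via duality and the good-filtration $t$-structure.
\begin{enumerate}
\item Take the self-dual projective generators $\widetilde R^T_{\dot w,\hat u}=\Ch^{u-\mon}_{G,\phi}(\widetilde{\Til}^\mon_{\dot w,\hat u})$.
\item By \cite[Theorem 4.136]{ZTame} and (c), $R\Hom\bigl(\Ch^{u-\mon}_{LG,\phi}(\cZ(V)\star^u\widetilde{\Til}^\mon_{\dot w,\hat u}),(i_1)_*\cInd\Gamma_\psi^{\widehat\unip}\bigr)$ is in degree $0$.
\item Via $\LL$ and Theorem \ref{thm: cllc}.(2), this says $R\Hom(\widetilde V\otimes\cF(\widetilde R^T_{\dot w,\hat u}),\cO)$ is in degree $0$ for every tilting $V$.
\item Self-duality, \eqref{eq: D_GS F^c} and $\omega\simeq\cO$ rewrite this as: $R\Hom$ from every tilting $\hat G$-module into $f_*\Psi\cF_{G,\psi^{-1}}(\widetilde R^T_{\dot w,\hat u})$ is in degree $0$.
\item Weyl modules have finite coresolutions by tilting modules. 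Hence $\Psi\cF_{G,\psi^{-1}}(\widetilde R^T_{\dot w,\hat u})$ is connective for the good-filtration $t$-structure, hence for the standard one.
\item Lemma \ref{lemma: Coh->IndPerf t-exact} transports this connectivity to $\Coh$. Self-duality once more puts $\cF(\widetilde R^T_{\dot w,\hat u})$ in the heart.
\end{enumerate}

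\textbf{A smaller gap in your right $t$-exactness step.} For $0\to K\to P\to L\to 0$ you have $\cF(L)=\mathrm{cofib}(\cF(K)\to\cF(P))$, and $\cF(K)$ is uncontrolled. Since $\overline{\FF}_\ell[G(\FF_q)]$ has infinite global dimension, you cannot terminate a resolution. You need an a priori bound on $\cF$ of the finitely many simples plus dimension shifting, as in Lemma \ref{lem: ft representations are nice}. Applied to a projective generator that is also an injective cogenerator, that lemma gives both halves of $t$-exactness at once.
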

\begin{proof}
    For $w\in W$ with a lift $\dot{w}\in G(\overline{\FF}_q)$, let 
    \[
        \widetilde{\Til}_{\dot{w},\hat{u}}^\mon \in\Shv_{u-\mon}(U\backslash G/U,\overline{\FF}_\ell)
    \]
    denote the indecomposable cofree unipotent monodromic tilting sheaf constructed in \cite[Proposition 4.50]{ZTame}. Let
    \[
        \Ch_{G,\phi}^{u-\mon}\colon \Shv_{u-\mon}(U\backslash G/U,\overline{\FF}_\ell)\to \Rep^{\widehat\unip}(G(\FF_q),\overline{\FF}_\ell)
    \]
    denote the Deligne--Lusztig induction functor in \cite[(4.62)]{ZTame}. By \cite[Theorem 4.91]{ZTame}, the objects
    \[
        \widetilde{R}^T_{\dot{w},\hat{u}}\coloneqq \Ch^{u-\mon}_{G,\phi}(\widetilde{\Til}^{\mon}_{\dot{w},\hat{u}}) 
    \]
    give a set of compact projective generators in $\Rep^{\widehat\unip}(G(\FF_q),\overline{\FF}_\ell)^\heartsuit$. Moreover, there is a non-canonical isomorphism
    \begin{equation}\label{eq: R^T self dual}
        \widetilde{R}^T_{\dot{w},\hat{u}}\simeq (\widetilde{R}^T_{\dot{w},\hat{u}})^\vee
    \end{equation}
    in $\Rep^{\widehat\unip}(G(\FF_q),\overline{\FF}_\ell)^\heartsuit$, as mentioned in the proof of \emph{loc. cit.} By \eqref{eq: D_GS F^c}, we see that
    \begin{equation}\label{eq: F(R^T) self dual}
        \DD_\GS'(\cF_{G,\psi}^{\widehat\unip}(\widetilde{R}^T_{\dot{w},\hat{u}}))\simeq \cF_{G,\psi^{-1}}^{\widehat\unip}(\widetilde{R}^T_{\dot{w},\hat{u}}).
    \end{equation}
    
    Let
    \[
        \Ch^{u-\mon}_{LG,\phi}\colon \Shv_{u-\mon}(\Iw^u\backslash LG/\Iw^u,\overline{\FF}_\ell)\to \Shv^{\widehat\unip}(\Isoc_G,\overline{\FF}_\ell)
    \]
    denote the affine Deligne--Lusztig induction functor in \cite[(4.46)]{ZTame}. By \eqref{eq: finite Hecke in affine Hecke}, we can view $\widetilde{\Til}_{\dot{w},\hat{u}}^\mon$ as a cofree unipotent monodromic tilting object in $\Shv_{u-\mon}(\Iw^u\backslash LG/\Iw^u,\overline{\FF}_\ell)$, and there is a natural isomorphism
    \[
        \Ch_{LG,\phi}^{u-\mon}(\widetilde{\Til}_{\dot{w}}^\mon)\simeq (i_1)_*\cInd_{G(O_F)}^{G(F)}\widetilde{R}^T_{\dot{w},\hat{u}}
    \]
    by \cite[Lemma 4.67]{ZTame}. Let $V$ be a tilting $\hat{G}$-representation. Let $\cZ^{\widehat\unip}(V)$ denote the unipotent monodromic central sheaf in \cite[Theorem 5.1.(2)]{ZTame}. By \cite[Theorem 4.136]{ZTame} and Assumption \ref{assump: local geom Langlands}.(c), we have
    \[
        R\Hom_{\Shv^{\widehat\unip}(\Isoc_G)}(\Ch^{u-\mon}_{LG,\phi}(\cZ^{\widehat\unip}(V)\star^u \widetilde{\Til}_{\dot{w},\hat{u}}^\mon),(i_1)_*\cInd_{G(O_F)}^{G(F)}\Gamma_\psi^{\widehat\unip})\in D(\overline{\FF}_\ell)^\heartsuit.
    \]
    By \cite[Lemma 5.6]{ZTame} and Theorem \ref{thm: cllc}.(2), this implies that
    \[
        R\Hom_{\Ind\Coh(\LS^{\widehat\unip}_{\hat{G}})}(\widetilde{V}\otimes \cF_{G,\psi}^{\widehat\unip}(\widetilde{R}^T_{\dot{w},\hat{u}}),\cO_{\LS^{\widehat\unip}_{\hat{G}}}) \in D(\overline{\FF}_\ell)^\heartsuit.
    \]
    Here $\widetilde{V}$ is the pullback of $V$ along $\LS^\tame_{\hat{G}}\to\BB\hat{G}$. By \eqref{eq: F(R^T) self dual}, we see that
    \[\begin{aligned}
        R\Hom_{\Ind\Coh(\LS^{\widehat\unip}_{\hat{G}})}(\widetilde{V}\otimes \cF_{G,\psi}^{\widehat\unip}(\widetilde{R}^T_{\dot{w},\hat{u}}),\cO_{\LS^{\widehat\unip}_{\hat{G}}}) &\simeq R\Hom_{\Ind\Coh(\LS^{\widehat\unip}_{\hat{G}})}(\DD_\GS'(\cO_{\LS^{\widehat\unip}_{\hat{G}}}),\DD_\GS'(\widetilde{V}\otimes \cF_{G,\psi}^{\widehat\unip}(\widetilde{R}^T_{\dot{w},\hat{u}})))\\
        &\simeq R\Hom_{\Ind\Coh(\LS^{\widehat\unip}_{\hat{G}})}(\cO_{\LS^{\widehat\unip}_{\hat{G}}}, c^*\widetilde{V}^\vee\otimes \cF_{G,\psi^{-1}}^{\widehat\unip}(\widetilde{R}^T_{\dot{w},\hat{u}}))\\
        &\simeq R\Hom_{\Ind\Coh(\LS^{\widehat\unip}_{\hat{G}})}(c^*\widetilde{V},\cF_{G,\psi^{-1}}^{\widehat\unip}(\widetilde{R}^T_{\dot{w},\hat{u}}))
    \end{aligned}\]
    is concentrated in degree 0. Here we use that $\cO_{\LS^{\widehat\unip}_{\hat{G}}}\simeq \omega_{\LS^{\widehat\unip}_{\hat{G}}}$ by \cite[Proposition 3.1.6]{Zhu2020Coherent}. This implies that $\Psi(\cF^{\widehat\unip}_{G,\psi}(\widetilde{R}^T_{\dot{w},\hat{u}}))\in \Ind\Perf(\LS^{\widehat\unip}_{\hat{G}})$ lies in the connective part of the good filtration $t$-structure. Indeed, by the highest weight structure, every Weyl module $\Delta_\mu$ for $\mu\in\XX^\bullet(\hat{T})^+$ admits a finite resolution of the form
    \[
        0\to \Delta_\mu\to V_0\to\cdots \to V_n\to 0
    \]
    with each $V_i$ tilting. Therefore if $M\in\Ind\Perf(\BB\hat{G})$ satisfies $R\Hom(V,M)\in D(\overline{\FF}_\ell)^{\leq 0}$ for any tilting representation $V$, then $R\Hom(\Delta_\mu,M)\in D(\overline{\FF}_\ell)^{\leq 0}$ for any $\mu\in \XX^\bullet(\hat{T})^+$, and hence $M$ lies in the connective part of the good filtration $t$-structure by definition. 
    
    In particular, we see that $\Psi(\cF_{G,\psi}^{\widehat\unip}(\widetilde{R}^T_{\dot{w},\hat{u}}))$ lies in the connective part of the standard $t$-structure. By Lemma \ref{lemma: Coh->IndPerf t-exact}, it follows that $\cF_{G,\psi}^{\widehat\unip}(\widetilde{R}^T_{\dot{w},\hat{u}})\in \Coh(\LS^{\widehat\unip}_{\hat{G}})$ lies in the connective part of the standard $t$-structure. Now by \eqref{eq: F(R^T) self dual} again, up to replacing $\psi$ by $\psi^{-1}$, we see that $\cF_{G,\psi}^{\widehat\unip}(\widetilde{R}^T_{\dot{w},\hat{u}})\in \Coh(\LS^{\widehat\unip}_{\hat{G}})^\heartsuit$ is concentrated in degree 0.

    Applying Lemma \ref{lem: ft representations are nice} to $\mathcal{A}=\Rep_c^{\widehat\unip}(G(\FF_q),\overline{\FF}_\ell)^\heartsuit$, $\bC=\Coh(\LS^{\widehat\unip}_{\ghat})$, $\cF=\cF_{G,\psi}^{\unip,c}$, and 
    \[
        P=\bigoplus_{w\in W}\widetilde{R}^T_{\dot{w},\hat{u}},
    \]
    we find that $\cF_{G,\psi}^{\unip,c}$ is $t$-exact. We note that $P$ is also an injective cogenerator by \eqref{eq: R^T self dual}, using that $(-)^\vee$ is an anti-equivalence of $\Rep_c^{\widehat\unip}(G(\FF_q),\overline{\FF}_\ell)^\heartsuit$. This proves (1). Now (2) follows from \eqref{eq: D_GS F^c} and \cite[Tag 0B5A]{stacks-project}.
\end{proof}

\begin{lemma}\label{lem: ft representations are nice}
    Let $\mathcal{A}$ be a finite-length abelian category with finitely many isomorphism classes of simple objects, and let $\bC$ be a stable category with a bounded $t$-structure.
    Let $\mathcal{F}: D^b(\mathcal{A}) \to \bC$ be an exact functor. Suppose that $P\in\mathcal{A}$ is both a projective generator and an injective cogenerator. If $\mathcal{F}(P)\in\bC^\heartsuit$, then the functor $\mathcal{F}$ is $t$-exact.
\end{lemma}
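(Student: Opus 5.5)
The plan is to show that $\mathcal{F}$ sends every object $X\in\mathcal{A}$ (viewed in degree $0$ of $D^b(\mathcal{A})$) into $\bC^\heartsuit$. This suffices. Every object of $D^b(\mathcal{A})^{\leq 0}$ is a finite iterated extension of shifts $X[n]$ with $n\geq 0$, and $\bC^{\leq 0}$ is closed under extensions and nonnegative shifts. Hence right $t$-exactness follows; left $t$-exactness follows symmetrically.

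\emph{Right exactness.} Since $P$ is a projective generator and $\mathcal{A}$ has finite length, every $X\in\mathcal{A}$ admits a surjection $P^{\oplus m}\twoheadrightarrow X$ whose kernel again lies in $\mathcal{A}$. Iterating gives a (possibly infinite) projective resolution $\cdots\to P^{\oplus m_1}\to P^{\oplus m_0}\to X\to 0$. Infinite resolutions cannot be used directly in $D^b$, so I will instead induct on cohomological degree. I aim to show $\mathcal{F}(X)\in\bC^{\leq 0}$ for all $X\in\mathcal{A}$. From the exact triangle $K\to P^{\oplus m}\to X\to K[1]$ we get $H^i(\mathcal{F}(X))\simeq H^{i+1}(\mathcal{F}(K))$ for $i\geq 1$, and $H^{i}(\mathcal{F}(P))=0$ for $i\neq 0$. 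Now set $N=\max_{X}\{i: H^i(\mathcal{F}(X))\neq 0\}$. Boundedness of this maximum is the delicate point. Since $\mathcal{A}$ has finitely many simples $L_1,\dots,L_r$ and every object is an iterated extension of them, the long exact sequence shows that the top nonvanishing degree of $\mathcal{F}(X)$ is at most the maximum over the $L_j$, which is finite because $\bC$ has a bounded $t$-structure. So $N<\infty$. If $N\geq 1$, choose $X$ attaining it; then $H^{N}(\mathcal{F}(X))\simeq H^{N+1}(\mathcal{F}(K))=0$, a contradiction. Hence $\mathcal{F}(\mathcal{A})\subseteq\bC^{\leq 0}$.

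\emph{Left exactness.} Dually, using that $P$ is an injective cogenerator, take $0\to X\to P^{\oplus m}\to Q\to 0$. The triangle gives $H^{i}(\mathcal{F}(X))\simeq H^{i-1}(\mathcal{F}(Q))$ for $i\leq -1$. Let $N'$ be the minimal nonvanishing degree over all objects; it is finite by the same simple-object argument. If $N'\leq -1$, then an object attaining it gives $H^{N'}(\mathcal{F}(X))\simeq H^{N'-1}(\mathcal{F}(Q))=0$, again a contradiction. Hence $\mathcal{F}(\mathcal{A})\subseteq\bC^{\geq 0}$, and the extension argument from the first paragraph finishes the proof.

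The main obstacle is the finiteness of the extremal degrees $N$ and $N'$. This is exactly where I use the finitely many simples together with boundedness of the $t$-structure on $\bC$; without these, degrees could escape to infinity.
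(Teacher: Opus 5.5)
Your proposal is correct and follows essentially the same route as the paper. Both arguments get a uniform cohomological bound on $\mathcal{F}(\mathcal{A})$ from the finitely many simples and boundedness of the $t$-structure on $\bC$, and then dimension-shift along presentations by copies of $P$ (projective for the upper bound, injective for the lower bound). Your extremal-degree contradiction is just a repackaging of the paper's iteration $H^i(\mathcal{F}(A))\simeq H^{i+k}(\mathcal{F}(K_k))$ with $k$ large.
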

\begin{proof}
    By finiteness of simple objects, there are some integers $a\leq b$ such that $\mathcal{F}(L) \in \bC^{[a, b]}$ for every simple object $L$. Induction on length gives the same bound for $\cF(A)$ for every $A\in\mathcal{A}$. Choose a surjection $P^n \to A$ with kernel $K_1$. The exact sequence
    \[
        0 \to K_1 \to P^n \to A \to 0
    \]
    gives $\cH^i(\cF(A)) \simeq \cH^{i+1}(\cF(K_1))$ for $i > 0$, because $\cF(P)\in\bC^\heartsuit$. Iterating this process along a projective resolution of $K_i$, we find that $\cH^i(\cF(A)) \simeq \cH^{i + k}(\cF(K_k))$. Taking $k$ sufficiently large, we see that $\cH^i(\cF(A)) = 0$ for $i > 0$. Since $P$ is also an injective cogenerator, we can play the same game with injective resolutions and obtain that $\cH^i(\cF(A)) = 0$ for $i < 0$. Since the source $t$-structure is bounded, this finishes the proof. 
\end{proof}

\subsection{The case \texorpdfstring{$\ell\mid q-1$ with $\ell$}{l|q-1 with l} large}\label{subsection: unconditional}
For our applications to the commuting scheme, it suffices to know the statement of Theorem \ref{thm: t-exact CM} on the principal block $\rep^{0}_c(G(\FF_q),\overline{\FF}_\ell)$. On this block, the results of \cite{BR-modular-affine-Hecke,BR-modular-two-affine-Hecke} suffice; the full Assumption \ref{assump: local geom Langlands} is not needed.
To apply the results of \cite{BR-modular-two-affine-Hecke}, we make the following assumption in this subsection.

\begin{assump}\label{assump: large ell}
    \begin{enumerate}
        \item $\ell \mid q-1$.
        \item $\ell$ is larger than the Coxeter number of any simple factors of $G$, and $\ell\neq 19$ (resp. $\ell\neq 31$) when $G$ has a simple factor of type $E_7$ (resp. $E_8$).
        \item $G$ has connected center.
    \end{enumerate}
\end{assump}
In particular, $\ell$ does not divide the order of $W$. See Remark \ref{rmk: on assump local geom Langlands} for what is known in \cite{BR-modular-two-affine-Hecke}. In this case, the functors \eqref{eq: L hat unip} and \eqref{eq: L ufg} in Theorem \ref{thm: cllc} can still be defined. Therefore the functors \eqref{eq: F hatunip} and \eqref{eq: F unip c} are still available. Moreover, the unipotent part of Proposition \ref{prop: F_G linear} and Corollary \ref{cor: F^fg linear} are still true. Note that under Assumption \ref{assump: large ell}.(2), every elementary $\ell$-subgroup of $G(\FF_q)$ is contained in a torus. Therefore \eqref{eq: u' in u} is an equivalence.

Let $\LS^{\widehat\unip}_{\hat{B}}$ denote the (derived) moduli stack of unipotent Langlands parameters valued in $\hat{B}$ defined in \cite[\S 2.2]{ZTame}. Let $\LS^{\unip}_{\hat{B}}\subseteq \LS^{\widehat\unip}_{\hat{B}}$ denote the closed substack where the image of $\tau\in I_F$ lies in the unipotent radical $\hat{U}$. Let
\[
    \pi^\unip\colon \LS^{\unip}_{\hat{B}}\to \LS^{\widehat\unip}_{\hat{G}}\quad\text{resp.}\quad\pi^{\widehat\unip}\colon \LS^{\widehat\unip}_{\hat{B}}\to \LS^{\widehat\unip}_{\hat{G}}
\]
denote the natural projections. Define the coherent Springer sheaf
\[
    \CohSpr_{\hat{G}}^\unip\coloneqq (\pi^\unip)_*\omega_{\LS^{\unip}_{\hat{B}}}\quad \text{resp.}\quad\CohSpr_{\hat{G}}^{\widehat\unip}\coloneqq (\pi^{\widehat\unip})_*\omega_{\LS^{\widehat\unip}_{\hat{B}}}
\]
By \cite[Theorem 5.1.(2) and Theorem 5.2.(2)]{ZTame}, we know that
\[
    \cF^{\unip,c}_{G,\psi}(R_1)\simeq \CohSpr_{\hat{G}}^\unip\quad\text{resp.}\quad \cF^{\widehat\unip}_{G,\psi}(\widetilde{R}_1)\simeq \CohSpr_{\hat{G}}^{\widehat\unip}
\]
for $R_1,\widetilde{R}_1$ in Definition \ref{def: R_1}. Let $\Xi_{\hat{u}}$ denote the unipotent monodromic part of $\Xi$ in Definition \ref{def: big tilting}. Let $\Ch^{u-\mon}_{G,\phi}(\Xi_{\hat{u}})\in \Rep^{\widehat\unip}(G(\FF_q),\overline{\FF}_\ell)$ denote the Deligne--Lusztig induction.

\begin{lemma}\label{lemma: CohSpr selfdual}
    There are natural isomorphisms
    \[ 
        \DD_\GS(\CohSpr^\unip_{\hat{G}})\simeq \CohSpr^\unip_{\hat{G}}\quad\text{and}\quad\DD_\GS(\CohSpr^{\widehat\unip}_{\hat{G}})\simeq \CohSpr^{\widehat\unip}_{\hat{G}}.
    \]
\end{lemma}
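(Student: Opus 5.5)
Since $\CohSpr$ is defined as a pushforward of a dualizing sheaf, I will get the self-duality from Grothendieck duality for the proper map $\pi$, and not from the representation-theoretic description $\cF(R_1)$.

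First, the maps $\pi^\unip\colon\LS^\unip_{\hat B}\to\LS^{\widehat\unip}_{\hat G}$ and $\pi^{\widehat\unip}\colon\LS^{\widehat\unip}_{\hat B}\to\LS^{\widehat\unip}_{\hat G}$ are proper (schematic and representable). Both are base changes of $\hat B/\hat B\to\hat G/\hat G$ (suitably completed), or of $\hat U/\hat B\to \hat G/\hat G$ in the unipotent case, whose fibers are closed in the flag variety $\hat G/\hat B$. Grothendieck duality for proper morphisms of (derived) stacks therefore gives a natural isomorphism
\[
\DD_\GS\circ\pi_*\simeq\pi_*\circ\DD_{\GS,\LS_{\hat B}}
\]
on coherent sheaves, where $\DD_{\GS,\LS_{\hat B}}=\underline{R\Hom}(-,\omega_{\LS_{\hat B}})$ with $\omega_{\LS_{\hat B}}=\pi^!\omega_{\LS_{\hat G}}$. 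I can use this in the form $\pi_*$ commutes with duality because $\pi^!$ is right adjoint to $\pi_*$ on $\Ind\Coh$ for proper $\pi$.

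Second, I apply this to $\omega_{\LS_{\hat B}}$. Since $\DD_{\GS,\LS_{\hat B}}(\omega_{\LS_{\hat B}})=\underline{R\Hom}(\omega,\omega)$, it equals $\cO_{\LS_{\hat B}}$ as soon as $\LS_{\hat B}$ is eventually coconnective/Gorenstein enough that $\omega$ is invertible, or more generally just because the Grothendieck–Serre duality is an anti-equivalence. This gives
\[
\DD_\GS(\CohSpr)\simeq\pi_*\cO_{\LS_{\hat B}}.
\]
It then suffices to show $\omega_{\LS_{\hat B}}\simeq\cO_{\LS_{\hat B}}$, so that $\pi_*\cO\simeq\pi_*\omega=\CohSpr$.

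Third, I prove the triviality of the dualizing sheaf. Both $\LS^{\widehat\unip}_{\hat B}$ and $\LS^{\unip}_{\hat B}$ are quasi-smooth derived stacks, presented as the derived zero locus of the map $(x,y)\mapsto xyx^{-1}y^{-q}$ on $\hat B\times\hat B$ (resp. $\hat B\times\hat U$) to $\hat U$-type targets, quotient by $\hat B$. For such a presentation $\omega$ is the determinant of the tangent complex, shifted. That complex is $\mathfrak b\oplus\mathfrak b\to\mathfrak b\to\mathfrak b$ (resp. with $\mathfrak u$ terms) with $\hat B$ acting adjointly, and all the characters cancel. This is exactly the argument of \cite[Proposition 3.1.6]{Zhu2020Coherent} used above for $\LS_{\hat G}$, applied with $\hat B$ in place of $\hat G$. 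The target-side dualizing sheaf $\omega_{\LS_{\hat G}}\simeq\cO$ is known from the same reference, and it is needed to identify $\DD_\GS$ with the duality relative to $\omega_{\LS_{\hat G}}$.

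I expect the main obstacle to be this third step, and in particular the bookkeeping of the $\hat B$-equivariant determinant. In the unipotent case the domain and target contributions are $\mathfrak b$, $\mathfrak u$ and $\mathfrak u$ (with the $y$-variable in $\hat U$), so the determinant is $\det(\mathfrak b)\otimes\det(\mathfrak u)\otimes\det(\mathfrak u)^{-1}\otimes\det(\mathfrak b)^{-1}$, which is trivial; I need to check that the shifts also match, since the virtual dimension is $0$ in both cases. The completion along the unipotent locus does not affect this computation, since dualizing complexes are compatible with formal completion along a closed substack via the usual ind-coherent formalism.
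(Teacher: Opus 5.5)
Your proposal is correct and follows the paper's proof: properness of $\pi^{\unip}$ and $\pi^{\widehat\unip}$ makes $\pi_*$ commute with $\DD_\GS$, and $\omega_{\LS_{\hat B}}\simeq\cO_{\LS_{\hat B}}$ because $\LS_{\hat B}$ is a quasi-smooth derived $[q]$-fixed-point stack of $\hat U/\hat B$ (resp. $\hat B^\wedge_u/\hat B$); your determinant bookkeeping just makes explicit what the paper asserts. One small slip: $\pi$ is not a base change of the one-variable map $\hat B/\hat B\to\hat G/\hat G$ (that describes $\widetilde{\LS}\to\LS$), since the fiber of $\pi$ over $(x,y)$ is the locus of Borels containing both $x$ and $y$, but this locus is still closed in $\hat G/\hat B$, so properness and the rest of your argument are unaffected.
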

\begin{proof}
    The morphisms $\pi^\unip$ and $\pi^{\widehat\unip}$ are representable and proper, and hence $*$-pushforwards along them commute with Grothendieck--Serre duality. By \cite[\S 2]{ZTame}, we can write $\LS^\unip_{\hat{B}}$ (resp. $\LS^{\widehat\unip}_{\hat{B}}$) as the derived $[q]$-fixed-point stack of $\hat{U}/\hat{B}$ (resp. $\hat{B}^\wedge_u/\hat{B}$). Therefore  $\LS^\unip_{\hat{B}}$ (resp. $\LS^{\widehat\unip}_{\hat{B}}$) is quasi-smooth with trivial dualizing sheaf, and therefore $\omega_{\LS^{\unip}_{\hat{B}}}\simeq\cO_{\LS^{\unip}_{\hat{B}}}$ (resp. $\omega_{\LS^{\widehat\unip}_{\hat{B}}}\simeq\cO_{\LS^{\widehat\unip}_{\hat{B}}}$) is canonically self-dual. The claim follows.
\end{proof}

\begin{lemma}\label{lemma: F_G big tilting}
    There is a natural isomorphism
    \[
        \cF_{G,\psi}^{\widehat\unip}(\Ch^{u-\mon}_{G,\phi}(\Xi_{\hat{u}}))\simeq \cO_{\LS^{\widehat\unip}_{\hat{G}}}\otimes_{\cO(\hat{C})}\cO(\hat{T}).
    \]
    In particular, $\cO_{\LS^{\widehat\unip}_{\hat{G}}}$ is a direct summand of $\cF_{G,\psi}^{\widehat\unip}(\Ch^{u-\mon}_{G,\phi}(\Xi_{\hat{u}}))$.
\end{lemma}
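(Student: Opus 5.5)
We would identify $\cF^{\widehat\unip}_{G,\psi}(\Ch^{u-\mon}_{G,\phi}(\Xi_{\hat{u}}))$ through the categorical trace. Here $\Xi$ is the big (free-monodromic) tilting object of Definition \ref{def: big tilting}, and $\Xi_{\hat u}$ is its unipotent monodromic part. By \eqref{eq: finite Hecke in affine Hecke}, the sheaf $\Xi_{\hat u}$ is an object of $\Shv_{u-\mon}(\Iw^u\backslash LG/\Iw^u,\overline{\FF}_\ell)$. By \cite[Lemma 4.67]{ZTame}, we have
\[
(i_1)_*\cInd_{G(O_F)}^{G(F)}\Ch^{u-\mon}_{G,\phi}(\Xi_{\hat u})\simeq \Ch^{u-\mon}_{LG,\phi}(\Xi_{\hat u}).
\]
The functor $\LL^{\widehat\unip}_{G,\psi}$ is the trace of $\BB^{\widehat\unip}_{G,\psi}$, and $\Ch^{u-\mon}_{LG,\phi}$ is the canonical map into the trace. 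Hence the left-hand side of the lemma is the image under the spectral trace map \eqref{eq: LS^tame tr} of $\BB^{\widehat\unip}_{G,\psi}(\Xi_{\hat u})$. Concretely, if $\delta\colon \LS^{\widehat\unip}_{\hat{G}}\dashrightarrow \LS_{\hat B,\breve F}^{\widehat\unip}\times_{\LS_{\hat G,\breve F}^{\widehat\unip}}\LS_{\hat B,\breve F}^{\widehat\unip}$ denotes the correspondence realizing the trace map, the image of a kernel $\mathcal{K}$ is obtained by pulling back along the $[q^{-1}]$-twisted diagonal and pushing forward to $\LS^{\widehat\unip}_{\hat G}$.

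The key input is the computation of $\BB^{\widehat\unip}_{G,\psi}(\Xi_{\hat u})$. The big tilting object is the one whose Whittaker averaging gives the free-monodromic Whittaker sheaf, and it is characterized as the projective cover of the skyscraper at the closed stratum, with endomorphism ring $\cO(\hat T\times_{\hat C}\hat T)^\wedge$. Under Bezrukavnikov's equivalence it should therefore correspond to the structure sheaf $\cO$ of the fiber product $\LS_{\hat B,\breve F}^{\widehat\unip}\times_{\LS_{\hat G,\breve F}^{\widehat\unip}}\LS_{\hat B,\breve F}^{\widehat\unip}$, pulled back along the regular locus. I would get this as follows. The Whittaker functor corresponds to pullback to the Kostant slice, so $\Xi$ corresponds to $\cO$ of the fiber product $\hat{T}\times_{\hat C}\hat T$ spread along $\hat{G}$, namely $\cO_{\hat{\tilde G}\times_{\hat G}\hat{\tilde G}}$. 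This identification is implicit in \cite[Theorem 5.1]{ZTame} together with Assumption \ref{assump: local geom Langlands}.(b) (or \cite{BR-modular-two-affine-Hecke}). I expect this step to be the main obstacle: checking that $\BB(\Xi_{\hat u})$ is exactly $\cO_{\tilde{\mathcal{N}}\times\tilde{\mathcal N}}$-type object, compatibly with the $\cO(\hat T\times_{\hat C}\hat T)$-actions.

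Next, we take the trace of this structure sheaf. The twisted diagonal of $\hat B/\hat B\times_{\hat G/\hat G}\hat B/\hat B$ over the trace is $\LS_{\hat B}\times_{\LS_{\hat G}}\LS_{\hat B}$ restricted diagonally, and pushing forward $\cO$ yields $\cO_{\LS_{\hat G}}\otimes_{\cO(\hat C)}\cO(\hat T)$. One tensor factor $\hat T\times_{\hat C}\hat T$ is contracted by the trace, and the result is a Grothendieck–Springer type pushforward. Equivalently, this is $\cO_{\LS_{\hat G}}$ base-changed along the finite flat map $\hat T\to\hat C$. Flatness of this map uses $\ell\nmid|W|$ (Pittie–Steinberg), so there are no higher derived terms. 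Compatibility with the $\cO(\hat C^{[q]}_u)$-linearity of Proposition \ref{prop: F_G linear} pins down the isomorphism.

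Finally, for the direct summand statement, note that $\cO(\hat C)\to\cO(\hat T)$ splits as $\cO(\hat C)$-modules via the averaging $\frac1{|W|}\sum_{w\in W}w$, since $\ell\nmid|W|$. Tensoring this splitting with $\cO_{\LS^{\widehat\unip}_{\hat G}}$ exhibits $\cO_{\LS^{\widehat\unip}_{\hat G}}$ as a direct summand of $\cF^{\widehat\unip}_{G,\psi}(\Ch^{u-\mon}_{G,\phi}(\Xi_{\hat u}))$.
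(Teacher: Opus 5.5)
Your overall architecture is the paper's. You move $\Xi_{\hat u}$ into the affine monodromic Hecke category and use \cite[Lemma 4.67]{ZTame} to identify the left-hand side with the image under \eqref{eq: LS^tame tr} of $\BB^{\widehat\unip}_{G,\psi}(\Xi_{\hat u})$. Then you compute that kernel, take its trace, and split off $\cO_{\LS^{\widehat\unip}_{\hat G}}$.

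Two of these steps are not actually carried out. For the kernel, your Kostant-slice and projective-cover remarks are motivation, not an argument. The paper does not derive this from \cite[Theorem 5.1]{ZTame}; it imports it directly as \cite[Proposition 9.5]{BR-modular-two-affine-Hecke}. That result gives $\BB^{\widehat\unip}_{G,\psi}(\Xi_{\hat u})\simeq\omega$ of the Grothendieck--Springer-type Steinberg stack $\LS^{\widehat\unip}_{\hat B,\breve F}\times_{\LS^{\widehat\unip}_{\hat G,\breve F}}\LS^{\widehat\unip}_{\hat B,\breve F}$. It is not an $\tilde{\mathcal N}$-type object, as you write at one point. Your $\cO$ agrees with this $\omega$ because the stack is quasi-smooth of virtual dimension $0$ with trivial dualizing sheaf. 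The trace then sends $\omega$ to $\widetilde\pi_*\omega_{\widetilde{\LS}}$, where $\widetilde{\LS}=\LS^{\widehat\unip}_{\hat G}\times^L_{\LS^{\widehat\unip}_{\hat G,\breve F}}\LS^{\widehat\unip}_{\hat B,\breve F}$.

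The genuine gap is the next step. Your sentence ``equivalently, this is $\cO_{\LS^{\widehat\unip}_{\hat G}}$ base-changed along the finite flat map $\hat T\to\hat C$'' is not a justification. Flatness of $\hat T\to\hat C$ only tells you that $\cO_{\LS^{\widehat\unip}_{\hat G}}\otimes^L_{\cO(\hat C)}\cO(\hat T)$ is classical; it says nothing about $\widetilde\pi_*\omega_{\widetilde{\LS}}$.

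What is needed is the following. Evaluation at $\tau$ exhibits $\widetilde\pi$ as a base change of the completed Grothendieck--Springer map $\pi_{\GS}\colon\hat B^\wedge_u/\hat B\to\hat G^\wedge_u/\hat G$. By proper base change you therefore need
\[
(\pi_{\GS})_*\cO_{\hat B^\wedge_u/\hat B}\simeq\cO_{\hat G^\wedge_u/\hat G}\otimes_{\cO(\hat C)}\cO(\hat T).
\]
This is a real theorem about the proper birational map $\hat B/\hat B\to(\hat G\times_{\hat C}\hat T)/\hat G$: it requires vanishing of higher direct images and normality of the target. The paper obtains it from the Lie algebra statement \cite[Proof of Proposition 3.4.1]{BMR2008}, transported to the group via a quasi-logarithm \cite[Appendix C]{BKV}.

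One minor correction: the freeness of $\cO(\hat T)$ over $\cO(\hat C)$ from \cite{Steinberg1975Pittie} comes from $\hat G$ having simply connected derived subgroup, not from $\ell\nmid|W|$. Your Reynolds-operator splitting for the direct-summand claim is valid, since $\ell\nmid|W|$ does hold here. It is a fine substitute for the paper's use of finite freeness.
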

\begin{proof}
    Under the equivalence \eqref{eq: B_G hatunip}, we have
    \[
        \BB_{G,\psi}^{\widehat\unip}(\Xi_{\hat{u}})\simeq \omega_{\LS^{\widehat\unip}_{\hat{B},\breve{F}}\times_{\LS^{\widehat\unip}_{\hat{G},\breve{F}}}\LS^{\widehat\unip}_{\hat{B},\breve{F}}}.
    \]
    by \cite[Proposition 9.5]{BR-modular-two-affine-Hecke}. On the other hand, consider the map
    \[
        \widetilde\pi^{\widehat\unip}\colon \widetilde{\LS}^{\widehat\unip}_{\hat{G}}= \LS^{\widehat\unip}_{\hat{G}}\times^L_{\LS^{\widehat\unip}_{\hat{G},\breve{F}}}\LS^{\widehat\unip}_{\hat{B},\breve{F}}\to \LS^{\widehat\unip}_{\hat{G}}.
    \]
    Under the fully faithful embedding \eqref{eq: LS^tame tr}, the object $\omega_{\LS^{\widehat\unip}_{\hat{B}}\times_{\LS^{\widehat\unip}_{\hat{G}}}\LS^{\widehat\unip}_{\hat{B}}}$ is computed by $R(\widetilde\pi^{\widehat\unip})_*\omega_{\widetilde{\LS}^{\widehat\unip}_{\hat{B}}}$, and hence
    \[
        \cF^{\widehat\unip}_{G,\psi}(\Ch^{u-\mon}_{G,\phi}(\Xi_{\hat{u}}))\simeq (\widetilde\pi^{\widehat\unip})_*\omega_{\widetilde{\LS}^{\widehat\unip}_{\hat{B}}}
    \]
    After fixing a topological generator $\tau\in I_F$, we have a Cartesian diagram
    \[\begin{tikzcd}
        \widetilde{\LS}^{\widehat\unip}_{\hat{G}}\ar[r]\ar[d,"\widetilde{\pi}^{\widehat\unip}"swap] & \hat{B}^\wedge_u/\hat{B} \ar[d,"\pi_\GS^{\widehat\unip}"] \\
        \LS^{\widehat\unip}_{\hat{G}}\ar[r] & \hat{G}^\wedge_u/\hat{G}
    \end{tikzcd}\]
    where the horizontal maps are given by evaluation at $\tau$, and $\pi_\GS^{\widehat\unip}$ is the completion of the Grothendieck--Springer alteration along the unipotent cone. The first assertion follows from the well-known isomorphism $(\pi_\GS^{\widehat\unip})_*\cO_{\hat{B}^\wedge_u/\hat{B}}\simeq\cO_{\hat{G}^\wedge_u/\hat{G}} \otimes_{\cO(\that\git W)} \cO(\that)$. See, for example, \cite[Proof of Proposition 3.4.1]{BMR2008} for the Lie algebra version. The group version follows using the quasi-logarithms of \cite[Appendix C]{BKV}.

    The last assertion follows as $\cO(\hat{T})$ is finite free over $\cO(\hat{C})$ by \cite{Steinberg1975Pittie}.
\end{proof} 

The following result is conjectured in \cite[Conjecture 4.4.2]{Zhu2020Coherent}.

\begin{prop}\label{prop: CohSpr heart CM}
    The objects $\CohSpr^{\unip}_{\hat{G}}$ and $\CohSpr^{\widehat\unip}_{\hat{G}}$ lie in the heart of the standard $t$-structure of $\Coh(\LS^{\widehat\unip}_{\hat{G}})$ and are maximal Cohen--Macaulay.
\end{prop}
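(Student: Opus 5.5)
We adapt the argument of Theorem \ref{thm: t-exact CM}, but only on the principal block, where the inputs of \cite{BR-modular-affine-Hecke,BR-modular-two-affine-Hecke} are available. The starting point is the identifications $\cF^{\unip,c}_{G,\psi}(R_1)\simeq \CohSpr^\unip_{\hat{G}}$ and $\cF^{\widehat\unip}_{G,\psi}(\widetilde{R}_1)\simeq\CohSpr^{\widehat\unip}_{\hat{G}}$. Since $\widetilde{R}_1$ is compact, the functors $\cF^{\widehat\unip}_{G,\psi}$ and $\cF^{\unip,c}_{G,\psi}$ agree on it. By Lemma \ref{lemma: CohSpr selfdual} and the fact that $\cO_{\LS^{\widehat\unip}_{\hat{G}}}\simeq\omega_{\LS^{\widehat\unip}_{\hat{G}}}$, both sheaves are Grothendieck--Serre self-dual. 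So once we show they lie in $\Coh^{\le 0}$, self-duality gives that they lie in the heart. Maximal Cohen--Macaulayness then follows from \cite[Tag 0B5A]{stacks-project}, because the dual of a sheaf in the heart is again in the heart. Thus the whole statement reduces to connectivity.

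For connectivity of $\CohSpr^{\widehat\unip}_{\hat{G}}$ we run the good-filtration argument. By Lemma \ref{lemma: Coh->IndPerf t-exact} it suffices to show that $\Psi(\CohSpr^{\widehat\unip}_{\hat{G}})$ is connective for the good filtration $t$-structure. By the tilting-resolution argument in the proof of Theorem \ref{thm: t-exact CM}, this reduces to showing
\[
R\Hom(\widetilde{V}\otimes\CohSpr^{\widehat\unip}_{\hat{G}},\cO_{\LS^{\widehat\unip}_{\hat{G}}})\in D(\overline{\FF}_\ell)^{\le 0}
\]
for every tilting $\hat{G}$-module $V$; here we use self-duality to interchange the two arguments.

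We compute the left side on the automorphic side. By Lemma \ref{lemma: F_G big tilting}, $\cO_{\LS^{\widehat\unip}_{\hat{G}}}$ is a direct summand of $\cF^{\widehat\unip}_{G,\psi}(\Ch^{u-\mon}_{G,\phi}(\Xi_{\hat u}))$. We use this in place of $\Gamma_\psi^{\widehat\unip}$, whose image under the functor would otherwise need Theorem \ref{thm: cllc}.(2) in the unconditional setting. Since $\widetilde{R}_1=\Ch^{u-\mon}_{G,\phi}$ of the monodromic unit, \cite[Lemma 5.6]{ZTame} and full faithfulness express the $R\Hom$ as
\[
R\Hom\bigl(\Ch^{u-\mon}_{LG,\phi}(\cZ^{\widehat\unip}(V)\star^u\delta),\,\Ch^{u-\mon}_{LG,\phi}(\Xi_{\hat u})\bigr).
\]
Using the trace formalism, this becomes a Hom in the monodromic affine Hecke category between $\cZ^{\widehat\unip}(V)$, which by \cite[Proposition 7.9]{BR-modular-affine-Hecke} is tilting after convolution, and the big tilting object $\Xi_{\hat u}$. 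Homs between tilting objects in the monodromic Hecke category are concentrated in degree $0$. This is the step we expect to be the main obstacle: the trace/Frobenius twisting must be shown to preserve the degree-$0$ concentration, presumably via the finite-dimensional Frobenius-coinvariant description in \cite[Theorem 4.136]{ZTame}.

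Finally, we treat $\CohSpr^{\unip}_{\hat{G}}$. By Lemma \ref{lemma: tilde R_1 inj proj}, $\widetilde{R}_1$ is an iterated extension of copies of $R_1$. Correspondingly, $R_1\simeq\widetilde{R}_1\otimes_{\cO(\hat{T}^{[q]}_u)}\overline{\FF}_\ell$, and this tensor product is underived because $\widetilde{R}_1$ is free over $\overline{\FF}_\ell[T(\FF_q)/T(\FF_q)^{(\ell)}]$ via induction. Applying the $\cO(\hat{C}^{[q]}_u)$-linear functor (Corollary \ref{cor: F^fg linear}), we obtain $\CohSpr^\unip_{\hat{G}}$ as the base change of $\CohSpr^{\widehat\unip}_{\hat{G}}$ along the unit section of $\hat{T}^{[q]}_u$. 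This is a Koszul-type complex bounded in degrees $[-r,0]$, so it is connective. Self-duality and Cohen--Macaulayness then conclude exactly as in the first paragraph.
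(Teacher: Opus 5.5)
\textbf{Verdict: the proposal has a genuine gap at the step you flag as the main obstacle, and the route you sketch for it does not work.} Your reductions up to that point are sound and match the paper. Self-duality (Lemma \ref{lemma: CohSpr selfdual} together with $\omega_{\LS^{\widehat\unip}_{\hat G}}\simeq\cO_{\LS^{\widehat\unip}_{\hat G}}$) plus \cite[Tag 0B5A]{stacks-project} reduces everything to connectivity. The good-filtration argument then reduces connectivity to $R\Hom(\widetilde V\otimes\CohSpr^{\widehat\unip}_{\hat G},\cO_{\LS^{\widehat\unip}_{\hat G}})\in D(\overline{\FF}_\ell)^{\le 0}$ for tilting $V$.

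The failure is that Homs in $\Tr(\bH,\sigma_*)$ between $\Ch(A)$ and $\Ch(B)$ are not $R\Hom_{\bH}(A,B)$. They involve Frobenius-twisted conjugation by the whole affine Hecke category. Already in the finite case, $\End(\widetilde R_1)=\overline{\FF}_\ell[B(\FF_q)^{(\ell)}\backslash G(\FF_q)/B(\FF_q)^{(\ell)}]$ is finite-dimensional, whereas the endomorphism ring of the monodromic unit in $\bH$ is a completion of $\cO(\hat T)$. So Ext-vanishing between tilting objects in the Hecke category gives no control over $R\Hom(\Ch^{u-\mon}_{LG,\phi}(\cZ^{\widehat\unip}(V)),\Ch^{u-\mon}_{LG,\phi}(\Xi_{\hat u}))$.

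Nor can \cite[Theorem 4.136]{ZTame} be applied to the target $\Ch^{u-\mon}_{LG,\phi}(\Xi_{\hat u})$. It computes Homs into the Gelfand--Graev object $(i_1)_*\cInd_{G(O_F)}^{G(F)}\Gamma^{\widehat\unip}_\psi$. Its input is that $\cZ^{\widehat\unip}(V)\star^u\widetilde\Delta^{\mon,\psi}_{w_0}$ is cofree tilting in the Whittaker category (\cite[Proposition 7.9]{BR-modular-affine-Hecke}), not tiltingness of $\cZ^{\widehat\unip}(V)$ and $\Xi_{\hat u}$ in $\bH$.

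The missing idea is Theorem \ref{thm: Ch big tilting}: $\Ch^{u-\mon}_{G,\phi}(\Xi_{\hat u})\simeq\Gamma^{\widehat\unip}_\psi\otimes_{\cO(\hat C)}\cO(\hat T)$. The paper proves it as follows:
\begin{itemize}
\item write $(-)\star\Xi\simeq\Av^\psi\circ\Av_\psi$;
\item use cyclicity of the trace to move the computation to the Whittaker module $\bM$;
\item on $\bM$, use $\Av_\psi\circ\Av^\psi\simeq(-)\otimes_{\cO(\hat C)}\cO(\hat T)$ and $\cl(\bM,\can)\simeq\Gamma_\psi$.
\end{itemize}
Combined with Lemma \ref{lemma: F_G big tilting} and finite freeness of $\cO(\hat T)$ over $\cO(\hat C)$, this makes $\cO_{\LS^{\widehat\unip}_{\hat G}}$ a summand of finitely many copies of $\cF^{\widehat\unip}_{G,\psi}(\Gamma^{\widehat\unip}_\psi)$. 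Theorem 4.136, applied to $\widetilde\Delta^{\mon}_{e,\hat u}$ and $\cZ^{\widehat\unip}(V)$, then gives the degree-$0$ concentration. So the paper avoids Theorem \ref{thm: cllc}.(2) by going \emph{through} $\Gamma_\psi$, not around it.

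Your last paragraph also needs repair, though its conclusion survives. $\cO(\hat T^{[q]}_u)\simeq\overline{\FF}_\ell[T(\FF_q)/T(\FF_q)^{(\ell)}]$ is a non-regular Artinian local ring, so $\overline{\FF}_\ell$ has infinite Tor-dimension over it. There is therefore no complex bounded in degrees $[-r,0]$. Two further points in that step are unjustified:
\begin{itemize}
\item this action is not the $\cO(\hat C^{[q]}_u)$-linearity of Corollary \ref{cor: F^fg linear};
\item $\cF^{\unip,c}_{G,\psi}$ is only exact, so commuting it with an infinite bar resolution needs an extra argument, e.g.\ via $\Psi$ and the continuous functor $\cF^{\widehat\unip}_{G,\psi}$.
\end{itemize}
The simpler route, as in the paper, is this. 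By Lemma \ref{lemma: tilde R_1 inj proj} and exactness, $\CohSpr^{\widehat\unip}_{\hat G}$ is a finite iterated extension of copies of the bounded object $\CohSpr^{\unip}_{\hat G}$. The top and bottom nonzero cohomology of $\CohSpr^{\unip}_{\hat G}$ would survive in $\CohSpr^{\widehat\unip}_{\hat G}$, so both lie in degree $0$.
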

\begin{proof}
    Let $V$ be a tilting $\hat{G}$-representation. Let $\mathcal{Z}^{\widehat\unip}(V)$ denote the unipotent monodromic central sheaf in \cite[Theorem 7.8]{BR-modular-two-affine-Hecke}. Let $\widetilde\Delta_{e,\hat{u}}^{\mon}\in \Shv_{u-\mon}(\mathrm{Iw}^u\backslash LG/\mathrm{Iw}^u,\overline{\FF}_\ell)$ (\cite[\S 4.2.3]{ZTame}) denote the unit object.
    Applying \cite[Theorem 4.136]{ZTame} to $\cF=\widetilde\Delta_{e,\hat{u}}^{\mon}$ and $\mathcal{Z}=\mathcal{Z}^{\widehat\unip}(V)$, we see that
    \[
        R\Hom_{\Shv^{\widehat\unip}(\Isoc_G,\overline{\FF}_\ell)}(\Ch^{u-\mon}_{LG,\phi}(\mathcal{Z}^{\widehat\unip}(V)),(i_1)_*\cInd_{G(O_F)}^{G(F)}\Gamma_{\psi}^{\widehat\unip})\in D(\overline{\FF}_\ell)^\heartsuit.
    \]
    By \cite[Corollary 4.68 and Lemma 5.6]{ZTame}, we know that
    \[
        \LL_{G,\psi}^{\widehat\unip}((\Ch^{u-\mon}_{LG,\phi}(\mathcal{Z}^{\widehat\unip}(V)))\simeq \widetilde{V}\otimes\LL_{G,\psi}^{\widehat\unip}((\Ch^{u-\mon}_{LG,\phi}(\widetilde{\Delta}_{e,\hat{u}}^{\mon}))\simeq \widetilde{V}\otimes\cF^{\widehat\unip}_{G,\psi}(\widetilde{R}_1)\simeq\widetilde{V}\otimes\CohSpr_{\hat{G}}^{\widehat\unip}.
    \]
    Here $\widetilde{V}$ is the pullback of $V$ along $\LS^{\widehat\unip}_{\hat{G}}\to\BB\hat{G}$.
    Recall $\omega_{\LS^{\widehat\unip}_{\hat{G}}}\simeq\cO_{\LS^{\widehat\unip}_{\hat{G}}}$.
    By Lemma \ref{lemma: F_G big tilting} and Theorem \ref{thm: Ch big tilting}, we know that $\cO_{\LS^{\widehat\unip}_{\hat{G}}}$ is a direct summand of 
    \[
        \cF_{G,\psi}^{\widehat\unip}(\Ch^{u-\mon}_{G,\phi}(\Xi_{\hat{u}}))\simeq\cF_{G,\psi}^{\widehat\unip}(\Gamma_\psi^{\widehat\unip})\otimes_{\cO(\hat{C})}\cO(\hat{T}),
    \]
    which is a direct sum of copies of  $\cF_{G,\psi}^{\widehat\unip}(\Gamma_\psi^{\widehat\unip})$.
    It follows that
    \[
        R\Hom_{\Coh(\LS^{\widehat\unip}_{\hat{G}})}(\widetilde{V}\otimes\CohSpr_{\hat{G}}^{\widehat\unip},\cO_{\LS^{\widehat\unip}_{\hat{G}}})\in D(\overline{\FF}_\ell)^\heartsuit.
    \]
    By Lemma \ref{lemma: CohSpr selfdual}, the $\overline{\FF}_\ell$-complex
    \[\begin{aligned}
        R\Hom_{\Coh(\LS^{\widehat\unip}_{\hat{G}})}(\widetilde{V},\CohSpr_{\hat{G}}^{\widehat\unip}) &\simeq R\Hom_{\Coh(\LS^{\widehat\unip}_{\hat{G}})}(\DD_\GS(\CohSpr_{\hat{G}}^{\widehat\unip}),\DD_\GS(\widetilde{V})) \\
        &\simeq R\Hom_{\Coh(\LS^{\widehat\unip}_{\hat{G}})}(\CohSpr_{\hat{G}}^{\widehat\unip},\widetilde{V}^\vee)\\
        &\simeq R\Hom_{\Coh(\LS^{\widehat\unip}_{\hat{G}})}(\widetilde{V}\otimes\CohSpr_{\hat{G}}^{\widehat\unip},\cO_{\LS^{\widehat\unip}_{\hat{G}}})
    \end{aligned}\]
    is concentrated in degree $0$. Arguing as in the proof of Theorem \ref{thm: t-exact CM}, we see that $\CohSpr_{\hat{G}}^{\widehat\unip}$ is concentrated in degree $0$. The statement for $\CohSpr^\unip_{\hat{G}}$ follows as $\CohSpr_{\hat{G}}^{\widehat\unip}$ is a finite iterated extension of copies of $\CohSpr^\unip_{\hat{G}}$ by Lemma \ref{lemma: tilde R_1 inj proj}, and $\CohSpr^\unip_{\hat{G}}$ has bounded cohomology.

    Finally, the Cohen--Macaulayness of $\CohSpr^\unip_{\hat{G}}$ and $\CohSpr^{\widehat\unip}_{\hat{G}}$ follows from Lemma \ref{lemma: CohSpr selfdual} and \cite[Tag 0B5A]{stacks-project}.
\end{proof}

Let 
\[
    \cF_{G,\psi}^{0,c}\colon \Rep^0_c(G(\FF_q),\overline{\FF}_\ell)\to \Coh(\LS^{\widehat\unip}_{\hat{G}})
\]
denote the restriction of $\cF_{G,\psi}^{\unip,c}$ along \eqref{eq: 0 in unip}. Note that by Proposition \ref{prop:GGinRep0} and Theorem \ref{thm: Ch big tilting}, the object 
\[\Ch^{u-\mon}_{G,\phi}(\Xi_{\hat{u}})\simeq \Gamma_\psi^{\widehat\unip}\otimes_{\cO(\hat{C})}\cO(\hat{T})\]
lies in $\Rep^0_c(G(\FF_q),\overline{\FF}_\ell)^{\heartsuit}$.

We have the following theorem. 

\begin{thm}\label{thm: exact CM large ell}
    \begin{enumerate}
        \item The functor
    \[
    \cF_{G,\psi}^{0,c}\colon \Rep^0_c(G(\FF_q),\overline{\FF}_\ell)\to \Coh(\LS^{\widehat\unip}_{\hat{G}})
    \]
    is $t$-exact when both sides are endowed with the standard $t$-structures.
    \item For any $V\in \Rep^0_c(G(\FF_q),\overline{\FF}_\ell)^\heartsuit$, the associated coherent sheaf $\cF_{G,\psi}^{0,c}(V)$ is maximal Cohen--Macaulay.
    \end{enumerate} 
\end{thm}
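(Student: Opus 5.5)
The plan is to rerun the proof of Theorem \ref{thm: t-exact CM} on the principal block, with the projective generators $\widetilde R^T_{\dot w,\hat u}$ replaced by $\widetilde{R}_1$. By Lemma \ref{lemma: tilde R_1 inj proj}, $\widetilde{R}_1$ is projective and injective. By definition of $\Rep^0$ and Proposition \ref{prop: principle block generator}, it is a projective generator of $\Rep^0_c(G(\FF_q),\overline{\FF}_\ell)^\heartsuit$. Since $\Hom(-,\widetilde{R}_1)$ detects nonzero objects in that block (Lemma \ref{lemma: decomp of abelian cat}), it is also an injective cogenerator there. This abelian category has finite length and finitely many simples, namely the summands of $R_1$ by Lemma \ref{lemma: R_1 ss}. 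Lemma \ref{lem: ft representations are nice} therefore applies with $\mathcal{A}=\Rep^0_c(G(\FF_q),\overline{\FF}_\ell)^\heartsuit$, $\bC=\Coh(\LS^{\widehat\unip}_{\hat G})$, $\cF=\cF^{0,c}_{G,\psi}$ and $P=\widetilde{R}_1$. Part (1) then reduces to showing $\cF^{0,c}_{G,\psi}(\widetilde{R}_1)\in\Coh(\LS^{\widehat\unip}_{\hat G})^\heartsuit$.

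Since $\widetilde{R}_1$ is compact, the functors $\cF^{\unip,c}_{G,\psi}$ and $\cF^{\widehat\unip}_{G,\psi}$ agree on it, so $\cF^{0,c}_{G,\psi}(\widetilde{R}_1)\simeq\CohSpr^{\widehat\unip}_{\hat G}$. Proposition \ref{prop: CohSpr heart CM} says this object lies in the heart, which proves (1). The one point to check is that $\Rep^0_c\subseteq\Rep^{\widehat\unip}_c=\Rep^{\unip}_c$, so that $\cF^{0,c}_{G,\psi}$ is defined; this holds by \eqref{eq: 0 in unip} and the equivalence \eqref{eq: u' in u} under Assumption \ref{assump: large ell}.

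For (2), I would argue as in the last step of Theorem \ref{thm: t-exact CM}. By \eqref{eq: D_GS F^c}, $\DD'_\GS(\cF^{0,c}_{G,\psi}(V))\simeq\cF^{\unip,c}_{G,\psi^{-1}}(V^\vee)$. I must then check that $V\mapsto V^\vee$ preserves $\Rep^0_c$. Because $\widetilde{R}_1^\vee\simeq\widetilde{R}_1$ (it is induced from the trivial character) and $\Rep^0$ is the block generated by $\widetilde{R}_1$, duality preserves the block decomposition of Proposition \ref{prop: principle block generator}. The definition of $\Rep^0$ does not involve $\psi$, so applying (1) with $\psi^{-1}$ gives $\DD'_\GS(\cF^{0,c}_{G,\psi}(V))\in\Coh^\heartsuit$ for $V$ in the heart. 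The sheaf $\cF^{0,c}_{G,\psi}(V)$ is also in the heart by (1). Since $\omega\simeq\cO$ and $c^*$ is $t$-exact, Grothendieck--Serre duality then shows the sheaf is maximal Cohen--Macaulay by \cite[Tag 0B5A]{stacks-project}.

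The main obstacle is the categorical bookkeeping rather than any new geometry. One must confirm that $\widetilde{R}_1$ really is an injective cogenerator of the principal block, and that duality and the swap $\psi\to\psi^{-1}$ stay inside $\Rep^0_c$. The substantive input, the heart statement for $\CohSpr^{\widehat\unip}_{\hat G}$, was already established in Proposition \ref{prop: CohSpr heart CM} using only the unipotent monodromic results of \cite{BR-modular-two-affine-Hecke}.
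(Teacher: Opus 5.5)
\textbf{Part (1).} Your argument is correct and is a mild variant of the paper's. You feed the projective--injective generator $\widetilde{R}_1$ into Lemma \ref{lem: ft representations are nice}, using $\cF^{0,c}_{G,\psi}(\widetilde R_1)\simeq\CohSpr^{\widehat\unip}_{\hat G}$; this is legitimate because $\widetilde R_1$ is compact. The paper instead uses Lemma \ref{lemma: R_1 ss}: every simple object of the block is a direct summand of $R_1$, so its image is a summand of $\cF^{\unip,c}_{G,\psi}(R_1)\simeq\CohSpr^{\unip}_{\hat G}$, which lies in the heart. It then concludes by d\'evissage, since the heart is closed under extensions.

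\textbf{Part (2).} Here there is a genuine gap. You invoke \eqref{eq: D_GS F^c}, which the paper derives from the duality compatibility in Theorem \ref{thm: cllc}(3), i.e.\ from the full Assumption \ref{assump: local geom Langlands}. Theorem \ref{thm: exact CM large ell} sits in Subsection \ref{subsection: unconditional}, where only Assumption \ref{assump: large ell} is in force.

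In that subsection the paper only records the following.
\begin{itemize}
\item The functors $\cF^{\widehat\unip}_{G,\psi}$ and $\cF^{\unip,c}_{G,\psi}$ are still defined and $\cO(\hat C^{[q]}_u)$-linear.
\item They send $R_1$ and $\widetilde R_1$ to the coherent Springer sheaves.
\end{itemize}
Remark \ref{rmk: on assump local geom Langlands} only says that \cite{BR-modular-two-affine-Hecke} supplies \cite[Theorem 5.1.(2)]{ZTame}, not the duality compatibilities. This is exactly why Proposition \ref{prop: CohSpr heart CM} proves self-duality of $\CohSpr$ geometrically, via Lemma \ref{lemma: CohSpr selfdual}, rather than through \eqref{eq: D_GS F^c}.

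So your step $\DD'_\GS(\cF^{0,c}_{G,\psi}(V))\simeq\cF^{\unip,c}_{G,\psi^{-1}}(V^\vee)$ is not established in this setting. Knowing only that $\cF(\widetilde R_1)$ is maximal Cohen--Macaulay does not rescue it, since that property does not pass to quotients or subobjects.

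\textbf{Fix.} Follow the paper's route.
\begin{enumerate}
\item By Lemma \ref{lemma: R_1 ss}, each simple $L$ in $\Rep^0_c(G(\FF_q),\overline{\FF}_\ell)^\heartsuit$ is a direct summand of $R_1$.
\item Hence $\cF^{0,c}_{G,\psi}(L)$ is a direct summand of $\CohSpr^\unip_{\hat G}$, which is maximal Cohen--Macaulay by Proposition \ref{prop: CohSpr heart CM}.
\item Direct summands of maximal Cohen--Macaulay sheaves are again maximal Cohen--Macaulay.
\item By (1), a short exact sequence in the heart goes to a short exact sequence of coherent sheaves. The depth lemma and induction on length then give (2) for all $V$ in the heart.
\end{enumerate}
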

\begin{proof}
    By Lemma \ref{lemma: R_1 ss}, simple representations $V$ in $\Rep^0_c(G(\FF_q),\overline{\FF}_\ell)^\heartsuit$ are direct summands of $R_1$. Therefore $\cF_{G,\psi}^{0,c}(V)$ is a direct summand of $\CohSpr_{\hat{G}}^\unip$, which is concentrated in degree $0$ and is maximal Cohen--Macaulay by Proposition \ref{prop: CohSpr heart CM}. The rest of the theorem is clear. 
\end{proof}

\section{Flatness}

Let $\hat{G}$ be a split reductive group over $\overline{\FF}_\ell$ with simply connected derived subgroup. Let
\[
    \Comm_{\hat{G}}\coloneqq\{x,y\in\hat{G}| xyx^{-1}=y\}
\]
denote the commuting scheme of $\hat{G}$. Let
\begin{equation}\label{eq: Comm -> C}
    \pi\colon \Comm_{\hat{G}}\to \hat{C}
\end{equation}
denote the map sending $(x,y)$ to the conjugacy class of $y$. In this section, we prove that the map \eqref{eq: Comm -> C} is flat at the identity point $u\in\hat{C}$.

Let $q\geq 1$ be an integer. Recall the $q$-commuting scheme
\[
    \Comm_{\hat{G}}^q=\{x,y\in\hat{G}|xyx^{-1}=y^q\}
\]
which admits a natural map
\begin{equation}\label{eq: Comm^q -> C^q}
    \Comm_{\hat{G}}^q\to \hat{C}^{[q]}
\end{equation}
sending $(x,y)$ to the image of $y$ in the Chevalley quotient.

Fix a closed immersion $\rho: \ghat \hookrightarrow \GL_N$ for some $N$. Write $\chi: \ghat \to \hat{C}$ for the (coarse) adjoint quotient, and let $\mathfrak{m}\subset\cO(\hat{C})$ be the maximal ideal of $u=\chi(1)$. For $a \in \mathbb{N}$, set 
\[\hat{C}_a\coloneqq \Spec\cO(\hat{C})/\mathfrak{m}^a.\] 
We let $C_N, \mathfrak{m}_N$ denote the analogous objects for $\GL_N$, and let $\alpha: \cO(C_N) \to \cO(\hat{C})$ denote the homomorphism induced by $\rho$.

\begin{lemma}\label{lemma: q=id}
    Let $a \leq \floor{\frac{\ell^{v_\ell(q-1)}}{N}}$. Then $[q] = \mathrm{Id}$ on $\ghat \times_{\hat{C}} \hat{C}_a$. 
\end{lemma}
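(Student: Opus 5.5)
The plan is to reduce to $\GL_N$ via the fixed embedding $\rho$, and there to combine Cayley--Hamilton with the Frobenius identity $(1+n)^{\ell^v}=1+n^{\ell^v}$ in characteristic $\ell$. Write $v=v_\ell(q-1)$ and $q=1+\ell^v s$. The claim is functorial. For every $\overline{\FF}_\ell$-algebra $R$ and every $g\in\ghat(R)$ whose image $\chi(g)\in\hat{C}(R)$ factors through $\hat{C}_a$, we must show $g^q=g$. The condition on $\chi(g)$ means that the homomorphism $\cO(\hat{C})\to R$ induced by $\chi(g)$ kills $\mathfrak{m}^a$.

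\emph{Step 1: reduction to $\GL_N$.} Since $\rho$ is a closed immersion and a group homomorphism, we have $\rho(g^q)=\rho(g)^q$, so it suffices to show $\rho(g)^q=\rho(g)$ in $\GL_N(R)$. The map $\alpha\colon\cO(C_N)\to\cO(\hat{C})$ sends $\mathfrak{m}_N$ into $\mathfrak{m}$, because $\rho(1)=1$. Hence $\alpha(\mathfrak{m}_N^a)\subseteq\mathfrak{m}^a$, and the point $\chi_N(\rho(g))$ factors through $(C_N)_a$. So we may assume $\ghat=\GL_N$.

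\emph{Step 2: nilpotence of $g-1$.} For $\GL_N$, the ring $\cO(C_N)$ is generated by the characteristic polynomial coefficients $e_i$ (with $e_N$ inverted). The ideal $\mathfrak{m}_N$ is generated by the elements $e_i-\binom{N}{i}$ for $i=1,\dots,N$. Let $I\subseteq R$ be the image of $\mathfrak{m}_N$, so that $I^a=0$. Then the characteristic polynomial of $g$ has the form $(t-1)^N+\sum_{i<N} c_i t^i$ with all $c_i\in I$. Cayley--Hamilton gives
\[
    n^N=-\sum_{i<N} c_i g^i, \qquad n\coloneqq g-1,
\]
and the scalars $c_i$ commute with $g$. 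Raising this identity to the $a$-th power expresses $n^{Na}$ as a sum of products of $a$ elements of $I$ times polynomials in $g$. Since $I^a=0$, we get $n^{Na}=0$.

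\emph{Step 3: conclusion.} Since $R$ is an $\FF_\ell$-algebra and $1$ commutes with $n$, we have $g^{\ell^v}=(1+n)^{\ell^v}=1+n^{\ell^v}$. The hypothesis $a\le\floor{\ell^v/N}$ gives $Na\le \ell^v$, so $n^{\ell^v}=0$ and therefore $g^{\ell^v}=1$. Consequently $g^q=g\cdot(g^{\ell^v})^s=g$.

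I do not expect a serious obstacle. The only point needing care is making precise the functor-of-points description of $\ghat\times_{\hat{C}}\hat{C}_a$ and the description of $\mathfrak{m}_N$ through characteristic polynomial coefficients. Once those are set up, the rest is the formal computation above.
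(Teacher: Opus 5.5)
Your proposal is correct and follows essentially the same route as the paper: embed into $\GL_N$ via $\rho$, use Cayley--Hamilton together with $\mathfrak{m}^a=0$ to get $(\rho(g)-\mathrm{Id}_N)^{Na}=0$, apply the characteristic-$\ell$ binomial identity to get $\rho(g)^{q-1}=\mathrm{Id}_N$, and conclude $g^q=g$ because $\rho$ is a closed immersion. The only cosmetic difference is that you apply Cayley--Hamilton to $g$ and then rewrite in terms of $n=g-1$, whereas the paper applies it directly to $T=\rho(g)-\mathrm{Id}_N$, whose characteristic polynomial already has its non-leading coefficients in $\mathfrak{m}_N$.
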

\begin{proof}
    Let $R$ be a test algebra and let $g\in  (\ghat \times_{\hat{C}} \hat{C}_a)(R)$. Set $T = \rho(g) - \mathrm{Id}_N$. By Cayley--Hamilton,
    \[
        T^N + c_1 T^{N-1} \cdots + c_N=0
    \]
    with the coefficients $c_i \in \mathfrak{m}_N$.
    Thus $T^N \in \mathfrak{m}_N \cdot M_N(R)$, whence we arrive at $T^{Na} = 0$, because $\alpha(\mathfrak{m}_N) \subset \mathfrak{m}$ and $\mathfrak{m}^a=0$ on $\hat{C}_a$. Write $q-1=\ell^rb$ with $\ell\nmid b$. Then
    \[
        \rho(g)^{q-1}=(\mathrm{Id}_N+T)^{\ell^rb}=(\mathrm{Id}_N+T^{\ell^r})^{b}=\mathrm{Id}_N
    \]
    by the assumption $\ell^r\geq Na$. Since $\rho$ is a closed immersion, we have $g^q=g$ as required. 
\end{proof}

\begin{cor}\label{cor:Comm q vs Comm}
    Let $a \leq \floor{\frac{\ell^{v_\ell(q-1)}}{N}}$. Then there is a natural isomorphism
    \[
        \Comm_{\ghat} \times_{\hat{C}} \hat{C}_a \simeq \Comm_{\ghat}^q \times_{\hat{C}^{[q]}} \hat{C}_a.
    \]
\end{cor}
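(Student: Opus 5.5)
Both sides will be written as closed subschemes of $\hat{G}\times\hat{G}$ cut out by the same equations, using Lemma~\ref{lemma: q=id}. First I will make sense of the right-hand side. The map $\hat{C}_a\hookrightarrow\hat{C}$ should factor through $\hat{C}^{[q]}$, i.e.\ $[q]=\mathrm{Id}$ on $\hat{C}_a$. By Lemma~\ref{lemma: q=id}, $[q]=\mathrm{Id}$ on $\hat{G}\times_{\hat{C}}\hat{C}_a$. Since $\chi$ is $[q]$-equivariant, the composite $\hat{G}\times_{\hat C}\hat C_a\to \hat C_a\to\hat C$ is equalized by $[q]$ and $\mathrm{Id}$. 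The projection $\hat{G}\times_{\hat{C}}\hat{C}_a\to\hat{C}_a$ is faithfully flat: $\chi$ is flat since $\hat G$ has simply connected derived subgroup (Steinberg), and it is surjective. Hence it is an epimorphism of schemes, so $[q]|_{\hat C_a}=\mathrm{incl}$. Thus $\hat C_a\subseteq\hat C^{[q]}$ and
\[
\Comm^q_{\hat G}\times_{\hat C^{[q]}}\hat C_a=\Comm^q_{\hat G}\times_{\hat C}\hat C_a,
\]
because $\hat{C}^{[q]}\to\hat{C}$ is a closed immersion, hence a monomorphism.

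Next I will compare the two functors of points. For a test algebra $R$, an $R$-point of either side is a pair $(x,y)\in\hat G(R)^2$ with $\chi(y)\in\hat C_a(R)$, subject to $xyx^{-1}=y$ or to $xyx^{-1}=y^q$ respectively. Here the map to $\hat C$ is $\chi(y)$ in both cases, by definition of \eqref{eq: Comm -> C} and \eqref{eq: Comm^q -> C^q}. The condition $\chi(y)\in\hat C_a(R)$ says exactly that $y\in(\hat G\times_{\hat C}\hat C_a)(R)$. Lemma~\ref{lemma: q=id} then gives $y^q=y$, so the two equations coincide. The identity on $\hat{G}\times\hat{G}$ therefore identifies the two closed subfunctors, and both are subschemes of $\hat G\times\hat G\times_{\hat C}\hat C_a$. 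The identification is natural in $R$ and compatible with the projections to $\hat{C}_a$.

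I expect the only delicate step to be the first one: checking that $\hat C_a$ lies in $\hat C^{[q]}$, so that the fiber product over $\hat C^{[q]}$ is defined and agrees with the fiber product over $\hat C$. If flatness of $\chi$ is not available in the required form, I will instead use that $\hat T\to\hat C$ is finite flat by Pittie--Steinberg. I will also use that $\hat T\times_{\hat C}\hat C_a\subseteq\hat G\times_{\hat C}\hat C_a$, so $[q]=\mathrm{Id}$ there as well, and conclude by faithful flatness of $\hat T\times_{\hat C}\hat C_a\to\hat C_a$.
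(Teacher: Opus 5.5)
Your proof is correct and takes the same route as the paper, which simply says the isomorphism is clear from Lemma~\ref{lemma: q=id}: for $y$ lying over $\hat{C}_a$ one has $y^q=y$, so the equations $xyx^{-1}=y$ and $xyx^{-1}=y^q$ cut out the same subfunctor. Your extra check that $\hat{C}_a\subseteq\hat{C}^{[q]}$ is also valid, whether via faithful flatness of $\hat{G}\times_{\hat{C}}\hat{C}_a\to\hat{C}_a$ or of $\hat{T}\times_{\hat{C}}\hat{C}_a\to\hat{C}_a$; the paper leaves this point implicit here and only records it later, in Corollary~\ref{cor: T_a->C_a flat}.
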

\begin{proof}
    This is clear by Lemma \ref{lemma: q=id}. 
\end{proof}

\begin{cor}\label{cor: T_a->C_a flat}
    Let $a \leq \floor{\frac{\ell^{v_\ell(q-1)}}{N}}$. Then $\hat{C}_a$ is a closed subscheme of $\hat{C}_u^{[q]}$. Furthermore one has
    \[
        \that \times_{\hat{C}} \hat{C}_a \simeq \hat{T}^{[q]}_u \times_{\hat{C}_u^{[q]}} \hat{C}_a.
    \]
    In particular $\that^{[q]}_u \times_{\hat{C}_u^{[q]}} \hat{C}_a \to \hat{C}_a$ is finite flat. 
\end{cor}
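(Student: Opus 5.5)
The plan is to deduce everything from Lemma \ref{lemma: q=id}, applied to $\hat{T}$ and to $\hat{C}$, together with the fact that $\cO(\hat{T})$ is finite free over $\cO(\hat{C})$ \cite{Steinberg1975Pittie}.

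First, show that $\hat{C}_a\hookrightarrow \hat{C}$ factors through $\hat{C}^{[q]}$. The map $[q]\colon\hat{C}\to\hat{C}$ is induced from $[q]$ on $\hat{G}$ via $\chi$. By Lemma \ref{lemma: q=id}, $[q]=\mathrm{Id}$ on $\hat{G}\times_{\hat{C}}\hat{C}_a$. Hence the two maps $[q]\circ\iota,\ \iota\colon \hat{C}_a\to\hat{C}$ agree after composing with $\chi\colon \hat{G}\times_{\hat{C}}\hat{C}_a\to \hat{C}_a$.

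To conclude, we need $\chi$ to be schematically dominant after base change to $\hat{C}_a$. Concretely, $\cO(\hat{C}_a)\to\cO(\hat{G})\otimes_{\cO(\hat{C})}\cO(\hat{C}_a)$ should be injective. A clean way is to use the torus instead. The map $\hat{T}\to\hat{C}$ is finite free of rank $|W|$, hence faithfully flat. So $\cO(\hat{C}_a)\hookrightarrow \cO(\hat{T})\otimes_{\cO(\hat{C})}\cO(\hat{C}_a)$ is injective, and $\hat{T}\times_{\hat{C}}\hat{C}_a$ is a closed subscheme of $\hat{G}\times_{\hat{C}}\hat{C}_a$. Therefore $[q]=\mathrm{Id}$ on $\hat{T}\times_{\hat{C}}\hat{C}_a$, which forces $[q]\circ\iota=\iota$. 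Thus $\hat{C}_a\subseteq \hat{C}^{[q]}$ is a closed subscheme. Since $\hat{C}_a$ is connected and supported at $u$, it lies in $\hat{C}^{[q]}_u$.

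Next, the same argument shows that $\hat{T}\times_{\hat{C}}\hat{C}_a$ lies in $\hat{T}^{[q]}$, and being supported at the identity, in $\hat{T}^{[q]}_u$. This gives a closed immersion $\hat{T}\times_{\hat{C}}\hat{C}_a\to \hat{T}^{[q]}_u\times_{\hat{C}^{[q]}_u}\hat{C}_a$.

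Conversely, $\hat{T}^{[q]}_u\times_{\hat{C}^{[q]}_u}\hat{C}_a=\hat{T}^{[q]}_u\times_{\hat{C}}\hat{C}_a$, because $\hat{C}^{[q]}_u\to\hat{C}$ is a monomorphism. This is a closed subscheme of $\hat{T}\times_{\hat{C}}\hat{C}_a$. The two closed subschemes of $\hat{T}\times_{\hat{C}}\hat{C}_a$ contain each other, so they coincide, giving the isomorphism. The only delicate point is keeping track that the fiber products over $\hat{C}^{[q]}_u$ and over $\hat{C}$ agree; this is the monomorphism remark above.

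Finally, finite flatness follows by base change: $\hat{T}\to\hat{C}$ is finite free, so $\hat{T}\times_{\hat{C}}\hat{C}_a\to\hat{C}_a$ is finite flat. The main obstacle is the schematic dominance step, which I handle via faithful flatness of $\hat{T}\to\hat{C}$ rather than working with $\hat{G}$ directly.
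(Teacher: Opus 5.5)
Your proposal is correct and follows the paper's own argument. You apply Lemma \ref{lemma: q=id} to get $[q]=\mathrm{Id}$ on $\hat{T}\times_{\hat{C}}\hat{C}_a$, deduce the identification from that, and obtain finite flatness by base change from the finite free map $\hat{T}\to\hat{C}$. You also make explicit two points the paper leaves implicit: that $\hat{C}_a$ factors through $\hat{C}^{[q]}_u$, because $\cO(\hat{C}_a)\to\cO(\hat{T}\times_{\hat{C}}\hat{C}_a)$ is injective, and that the fiber products over $\hat{C}^{[q]}_u$ and over $\hat{C}$ agree, because $\hat{C}^{[q]}_u\to\hat{C}$ is a monomorphism.
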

\begin{proof}
    By Lemma \ref{lemma: q=id}, we have $[q]=\mathrm{Id}$ on $\that \times_{\hat{C}} \hat{C}_a$. The claimed identification follows, and finite flatness follows by base change from $\hat T\to\hat C$.
\end{proof}

Let $q$ be a prime power with $\ell\mid q-1$ and let $\FF_q$ be the finite field. Let $G$ be the split reductive group over $\FF_q$ with dual group $\hat{G}$. Note that $G$ has connected center by assumption.

Recall the representation $\Gamma^{\widehat\unip}_\psi$ in \eqref{eq: unip GG}. Recall 
\[
    \End_{G(\FF_q)}(\Gamma_{\psi}^{\widehat\unip})\simeq \cO(\hat{C}^{[q]}_u).
\]
by \eqref{eq: End Gamma_psi^unip}. For $a \in \mathbb{N}$, we define $V_a \coloneqq \Gamma_{\psi}^{\widehat\unip} \otimes_{\cO(\hat{C}_u^{[q]})} \cO(\hat{C}_a)\in \Rep^{\widehat\unip}_c(G(\FF_q),\overline{\FF}_\ell)^\heartsuit$.

\begin{prop}\label{prop: V_a flat}    
    Assume that $\ell\nmid |W|$. If $a \leq \floor{\frac{\ell^{v_\ell(q-1)}}{N}}$, then $V_a$ is flat over $\cO(\hat{C}_a)$. 
\end{prop}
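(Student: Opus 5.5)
Since $\cO(\hat{C}_a)$ is a local Artinian ring, flatness of the finitely generated module $V_a$ is equivalent to $V_a$ being free over $\cO(\hat{C}_a)$. I would prove this by testing against the projective object $\widetilde{R}_1$. Concretely, I would compare the underlying $\cO(\hat{C}_a)$-module of $V_a$ with $\Hom_{G(\FF_q)}(\widetilde{R}_1,V_a)$, and use the finite flatness in Corollary \ref{cor: T_a->C_a flat}.

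The first step is the computation
\[
\Hom_{G(\FF_q)}(\widetilde{R}_1,V_a)\simeq \Hom_{G(\FF_q)}(\widetilde{R}_1,\Gamma_\psi^{\widehat\unip})\otimes_{\cO(\hat{C}^{[q]}_u)}\cO(\hat{C}_a)\simeq \cO(\hat{T}^{[q]}_u)\otimes_{\cO(\hat{C}^{[q]}_u)}\cO(\hat{C}_a).
\]
The first isomorphism holds because $\Hom(\widetilde{R}_1,-)$ is exact (Lemma \ref{lemma: tilde R_1 inj proj}) and commutes with the finite presentation of $\cO(\hat{C}_a)$ as a cokernel of free $\cO(\hat{C}^{[q]}_u)$-modules; this is where we use that $\hat{C}_a\subseteq\hat{C}^{[q]}_u$ is a closed subscheme. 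The second isomorphism is \eqref{eq: Hom R_e, Gamma}, which is compatible with the $\cO(\hat{C}^{[q]}_u)$-module structures. By Corollary \ref{cor: T_a->C_a flat}, the right-hand side is finite flat, hence free, over $\cO(\hat{C}_a)$.

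Next I would pass from this Hom-space back to $V_a$. By Proposition \ref{prop:GGinRep0}, $\Gamma_\psi^{\widehat\unip}$ lies in $\Rep^0$, and so does its quotient $V_a$. The principal block $\Rep^0_c(G(\FF_q),\overline{\FF}_\ell)^\heartsuit$ has $\widetilde{R}_1$ as a projective generator. Hence the functor
\[
H\coloneqq\Hom(\widetilde{R}_1,-)\colon \Rep^0_c(G(\FF_q),\overline{\FF}_\ell)^\heartsuit\to \Mod_{\End(\widetilde{R}_1)}
\]
is exact and faithful, and in fact an equivalence onto finitely generated modules. I would then test flatness by $\mathrm{Tor}$ against the residue field $k$: since $\mathrm{Tor}$ is computed by a free resolution of $k$ over $\cO(\hat{C}_a)$, and $H$ is exact and $\cO(\hat{C}_a)$-linear, we get
\[
H\bigl(\mathrm{Tor}_1^{\cO(\hat{C}_a)}(V_a,k)\bigr)=\mathrm{Tor}_1^{\cO(\hat{C}_a)}(H(V_a),k)=0.
\]
Faithfulness of $H$ on the principal block then gives $\mathrm{Tor}_1^{\cO(\hat{C}_a)}(V_a,k)=0$, which is flatness by the local criterion. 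Here $\mathrm{Tor}_1$ is understood as computed in the category of representations, and its underlying vector space is the usual $\mathrm{Tor}$.

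The main obstacle is justifying the $\cO(\hat{C}^{[q]}_u)$-linearity of \eqref{eq: Hom R_e, Gamma}, that is, checking that the isomorphism of \cite[Theorem 6.1.2]{Eteve-Jordan} intertwines the endomorphism action on $\Gamma_\psi$ with the module structure on $\cO(\hat{T}^{[q]})$. The proof of Proposition \ref{prop:GGinRep0} states this as an isomorphism of $\cO(\hat{C}^{[q]})$-modules, so I would cite that and restrict to the unipotent component. The hypothesis $\ell\nmid|W|$ enters only through Proposition \ref{prop: principle block generator} and Proposition \ref{prop:GGinRep0}.
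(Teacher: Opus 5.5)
Your proposal is correct and follows essentially the same route as the paper. You compute $\Hom_{G(\FF_q)}(\widetilde{R}_1,V_a)\simeq \cO(\hat{T}^{[q]}_u\times_{\hat{C}^{[q]}_u}\hat{C}_a)$ via exactness of $\Hom(\widetilde{R}_1,-)$ and \eqref{eq: Hom R_e, Gamma}, invoke Corollary \ref{cor: T_a->C_a flat}, and transfer flatness back to $V_a$ because $\widetilde{R}_1$ is a projective generator of the principal block. Your $\mathrm{Tor}_1$ and faithfulness argument simply makes explicit the ``$V_a$ is flat iff $\Hom(\widetilde{R}_1,V_a)$ is flat'' step, which the paper asserts without further comment.
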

\begin{proof}
    Recall \eqref{eq: Hom R_e, Gamma} that
    \[
        \Hom_{G(\FF_q)}(\widetilde{R}_1, \Gamma_{\psi}^{\widehat\unip}) = \cO(\hat{T}_u^{[q]})
    \]
    and $\Gamma_{\psi}^{\widehat\unip}\in\Rep^0_c(G(\FF_q),\overline{\FF}_\ell)^{\heartsuit}$. By Proposition \ref{prop: principle block generator}, the object $\widetilde{R}_1$ is a projective generator of $\Rep_c^0(G(\FF_q),\overline{\FF}_\ell)$. Therefore $V_a$ is flat if and only if
    \[
        \Hom_{G(\FF_q)}(\widetilde{R}_1,V_a)=\Hom_{G(\FF_q)}(\widetilde{R}_1, \Gamma_{\psi}^{\widehat\unip}) \otimes_{\cO(\hat{C}^{[q]}_u)}\cO(\hat{C}_a)=\cO(\hat{T}^{[q]}_u\times_{\hat{C}^{[q]}_u}\hat{C}_a)
    \]
    is flat over $\cO(\hat{C}_a)$. This follows from Corollary \ref{cor: T_a->C_a flat}.
\end{proof}

\begin{cor}\label{cor: Comm^q C_a flat}
    Assume Assumption \ref{assump: large ell}.
    The map
    \begin{equation}\label{eq: Comm^q C_a}
        \Comm^q_{\hat{G}}\times_{\hat{C}^{[q]}}\hat{C}_a\to \hat{C}_a
    \end{equation}    
    induced by \eqref{eq: Comm^q -> C^q} is flat.
\end{cor}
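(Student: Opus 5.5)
We would prove flatness of \eqref{eq: Comm^q C_a} by realizing its structure sheaf, pushed to the unipotent component, as $\cF^{\unip,c}_{G,\psi}$ of a flat family of representations, namely $V_a$, and then transporting flatness through the $t$-exact functor of Theorem~\ref{thm: exact CM large ell}. We may assume $a \leq \floor{\ell^{v_\ell(q-1)}/N}$: given $a$, we choose a prime power $q$ with $v_\ell(q-1)$ large enough, which is harmless because by Corollary~\ref{cor:Comm q vs Comm} the target is independent of $q$ in this range. By Corollary~\ref{cor: T_a->C_a flat}, $\hat{C}_a\subseteq \hat{C}^{[q]}_u$, so $\Comm^q_{\hat{G}}\times_{\hat{C}^{[q]}}\hat{C}_a$ lies over the unipotent component. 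Since $\hat{G}$ is reductive, flatness of an affine $\hat{G}$-scheme over $\hat{C}_a$ can be tested on the quotient stack. It therefore suffices to show that $\cO_{\LS^{\widehat\unip}_{\hat{G}}}\otimes^L_{\cO(\hat{C}^{[q]}_u)}\cO(\hat{C}_a)$ is flat over $\cO(\hat{C}_a)$, i.e. that $-\otimes^L_{\cO(\hat{C}_a)} M$ of it is concentrated in degree $0$ for every finite $\cO(\hat{C}_a)$-module $M$. This object must also have classical fiber product as its $H^0$.

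Step 1: identify $\cF^{\unip,c}_{G,\psi}(\Gamma^{\widehat\unip}_\psi)\simeq \cO_{\LS^{\widehat\unip}_{\hat{G}}}$. This follows from Theorem~\ref{thm: cllc}(2), restricted to the unipotent summand, together with the agreement of $\cF^{\unip,c}$ and $\cF^{\widehat\unip}$ on compact objects; $\Gamma_\psi^{\widehat\unip}$ is compact projective.

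Step 2: use the $\cO(\hat{C}^{[q]}_u)$-linearity of Corollary~\ref{cor: F^fg linear}, which commutes with finite colimits, to get
\[
\cF^{\unip,c}_{G,\psi}\bigl(\Gamma^{\widehat\unip}_\psi\otimes^L_{\cO(\hat{C}^{[q]}_u)} N\bigr)\simeq \cO_{\LS^{\widehat\unip}_{\hat{G}}}\otimes^L_{\cO(\hat{C}^{[q]}_u)} N
\]
for any finite $\cO(\hat{C}^{[q]}_u)$-module $N$. Take $N=M$, viewed as a $\cO(\hat{C}^{[q]}_u)$-module via $\hat{C}_a$. By associativity, the left side is $\cF(V_a'\otimes^L_{\cO(\hat{C}_a)}M)$, where $V_a'=\Gamma\otimes^L\cO(\hat{C}_a)$.

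Step 3: show $V_a'=V_a$ (no higher Tor) and that $V_a\otimes^L_{\cO(\hat{C}_a)}M$ lies in the heart of $\Rep^0_c$. We apply the exact functor $\Hom(\widetilde{R}_1,-)$, where $\widetilde{R}_1$ is a projective generator of the block by Proposition~\ref{prop: principle block generator}. This reduces the claim to Tor-vanishing of $\cO(\hat{T}^{[q]}_u)$ against $\cO(\hat{C}_a)$ and $M$. Here $\hat{T}^{[q]}_u\to\hat{C}^{[q]}_u$ is finite flat: near $u$ it is the base change of the finite flat map $\hat{T}\to\hat{C}$ by Pittie--Steinberg, since $[q]$ preserves that square. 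Hence $\Gamma^{\widehat\unip}_\psi$ is itself flat over $\cO(\hat{C}^{[q]}_u)$, in the sense that $\Hom(\widetilde{R}_1,-)$ detects and commutes with derived tensor. Proposition~\ref{prop: V_a flat} gives flatness of $V_a$.

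Step 4: apply the $t$-exactness of $\cF^{0,c}_{G,\psi}$ from Theorem~\ref{thm: exact CM large ell}. It sends the heart object $V_a\otimes_{\cO(\hat{C}_a)}M$ to an object in degree $0$. Consequently $\cO_{\LS}\otimes^L_{\cO(\hat{C}^{[q]}_u)}M$ is concentrated in degree $0$ for all $M$, which is flatness over $\cO(\hat{C}_a)$, and $\cO_{\LS}\otimes^L\cO(\hat{C}_a)$ is classical. On the stack side, $\LS^{\widehat\unip}_{\hat{G}}$ is a derived stack but is classical and lci, so its structure sheaf is the classical $q$-commuting quotient.

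Main obstacle: Step 3, namely the derived flatness of $\Gamma_\psi^{\widehat\unip}$ over $\cO(\hat{C}^{[q]}_u)$. The issue is whether the Tor computation via $\Hom(\widetilde{R}_1,-)$ is legitimate. Our first attempt is the finite flat base change of $\hat{T}\to\hat{C}$ along $[q]$-fixed points. Failing that, we restrict everything to $\hat{C}_a$ from the start: resolve $M$ by a complex over $\cO(\hat{C}_a)$ only, which requires only Proposition~\ref{prop: V_a flat} and the classical identification $V_a=\Gamma\otimes\cO(\hat{C}_a)$. We then compare the derived base change $\cO_\LS\otimes^L\cO(\hat{C}_a)$ via an independent argument. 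Since $\hat{C}^{[q]}_u$ is a complete intersection (it is $\hat{T}^{[q]}_u$ modulo $W$), its $H^{<0}$ should also be controllable by $t$-exactness applied to a Koszul-type resolution of $\cO(\hat{C}_a)$.
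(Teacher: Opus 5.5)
\textbf{Main gap: Step~3 is false.} Your Step~3 claims that $\Gamma^{\widehat\unip}_\psi$ is flat over $\cO(\hat{C}^{[q]}_u)$. This is false, and Steps~2 and~4 inherit the error.

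First, $\hat{T}^{[q]}_u\to\hat{C}^{[q]}_u$ is not a base change of $\hat{T}\to\hat{C}$. That $[q]$ commutes with $\hat{T}\to\hat{C}$ does not make fixed points commute with base change. Indeed $\hat{T}\times_{\hat{C}}\hat{C}^{[q]}=\{t : t^q\in W\cdot t\}$ contains every twisted locus $\{t^q=w(t)\}$, not just $\hat{T}^{[q]}$.

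Second, the map is not flat. Take $\hat{G}=SL_2$, $r=v_\ell(q-1)$, $s=t-1$, and let $w=s^2/(1+s)$ be the coordinate of $\hat{C}$ at $u$.
\begin{itemize}
\item Since $t^q+t^{-q}-t-t^{-1}=(t^q-t)(1-t^{-q-1})$ has $s$-adic valuation $\ell^r+1$, one gets $\cO(\hat{C}^{[q]}_u)=\overline{\FF}_\ell[w]/(w^{(\ell^r+1)/2})$.
\item Meanwhile $\cO(\hat{T}^{[q]}_u)=\overline{\FF}_\ell[s]/(s^{\ell^r})$ has odd length $\ell^r$, so it cannot be free over $\cO(\hat{C}^{[q]}_u)$.
\item Using the $2$-periodic resolution of $\cO(\hat{C}_a)=\overline{\FF}_\ell[w]/(w^a)$, one finds that $\mathrm{Tor}_i^{\cO(\hat{C}^{[q]}_u)}(\cO(\hat{T}^{[q]}_u),\cO(\hat{C}_a))$ is one-dimensional for every $i\geq 1$ and every $1\leq a\leq(\ell^r-1)/2$.
\end{itemize}

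Apply the exact conservative functor $\Hom(\widetilde{R}_1,-)$. It follows that $V_a'=\Gamma^{\widehat\unip}_\psi\otimes^L\cO(\hat{C}_a)$ has nonzero cohomology in all negative degrees. So $V_a'\neq V_a$. Worse, $V_a'$ is not even in $\Rep_c$, so $\cF^{\unip,c}_{G,\psi}$ cannot be applied to it.

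Your Step~4 conclusion fails too. Take $M$ to be the residue field. By the local flatness criterion over the Artinian ring $\cO(\hat{C}^{[q]}_u)$, Step~4 would give flatness of $\LS^{\widehat\unip}_{\hat{G}}$ over all of $\hat{C}^{[q]}_u$. That is strictly stronger than the statement, and the representation-theoretic input it needs is exactly what the computation above shows to fail.

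Flatness only appears after the \emph{classical} base change to $\hat{C}_a$ with $a\leq\floor{\ell^{v_\ell(q-1)}/N}$. In that range $[q]$ becomes the identity (Corollary~\ref{cor: T_a->C_a flat}). The derived base change $\cO_{\LS}\otimes^L\cO(\hat{C}_a)$ need not be classical. It is also irrelevant, since the fiber product in the statement is classical.

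\textbf{Second issue: Step~1 uses an unavailable result.} Step~1 invokes Theorem~\ref{thm: cllc}(2). That result is only available under Assumption~\ref{assump: local geom Langlands}, not under Assumption~\ref{assump: large ell}. The paper avoids it as follows.
\begin{itemize}
\item By Lemma~\ref{lemma: F_G big tilting}, $\cO_{\LS^{\widehat\unip}_{\hat{G}}}$ is a direct summand of $\cF^{0,c}_{G,\psi}(\Ch^{u-\mon}_{G,\phi}(\Xi_{\hat{u}}))$.
\item By Theorem~\ref{thm: Ch big tilting}, $\Ch^{u-\mon}_{G,\phi}(\Xi_{\hat{u}})\simeq\Gamma^{\widehat\unip}_\psi\otimes_{\cO(\hat{C})}\cO(\hat{T})$, which is a finite sum of copies of $\Gamma^{\widehat\unip}_\psi$.
\item Flatness passes to direct summands.
\end{itemize}

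\textbf{The repair.} With that substitution, the correct argument is your fallback with the derived comparison deleted.
\begin{itemize}
\item For $X$ in the heart, $X\otimes_{\cO(\hat{C}^{[q]}_u)}\cO(\hat{C}_a)$ is the cokernel of a map $X^{\oplus m}\to X$ given by generators of the ideal.
\item Hence the $t$-exact, $\cO(\hat{C}^{[q]}_u)$-linear functor $\cF^{0,c}_{G,\psi}$ commutes with this classical base change (Theorem~\ref{thm: exact CM large ell}(1) and Corollary~\ref{cor: F^fg linear}).
\item Let $P_\bullet$ be a free resolution over $\cO(\hat{C}_a)$. Applying the functor to $Y\otimes_{\cO(\hat{C}_a)}P_\bullet$ shows that it sends $\cO(\hat{C}_a)$-flat objects of the heart to $\cO(\hat{C}_a)$-flat sheaves.
\item Flatness of $V_a$ over $\cO(\hat{C}_a)$ is then exactly Proposition~\ref{prop: V_a flat}.
\end{itemize}
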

\begin{proof}
    We need to show that
    \[
        \LS^{\widehat\unip}_{\hat{G}}\times_{\hat{C}^{[q]}_u}\hat{C}_a\to\hat{C}_a
    \]
    is flat. By Lemma \ref{lemma: F_G big tilting}, the structure sheaf $\cO_{\LS^{\widehat\unip}_{\hat{G}}}$ is a direct summand of $\cF_{G,\psi}^{0,c}(\Ch^{u-\mon}_{G,\phi}(\Xi_{\hat{u}}))$. It therefore suffices to show that
    \[
        \cF_{G,\psi}^{0,c}(\Ch^{u-\mon}_{G,\phi}(\Xi_{\hat{u}}))\otimes_{\cO(\hat{C}^{[q]}_u)}\cO(\hat{C}_a)
    \]
    is flat over $\cO(\hat{C}_a)$. By Corollary \ref{cor: F^fg linear} and Theorem \ref{thm: exact CM large ell}.(1), it suffices to show that
    \[
        \Ch^{u-\mon}_{G,\phi}(\Xi_{\hat{u}})\otimes_{\cO(\hat{C}^{[q]}_u)}\cO(\hat{C}_a)
    \]
    is flat over $\cO(\hat{C}_a).$ By Theorem \ref{thm: Ch big tilting}, using that $\cO(\hat{T})$ is finite free over $\cO(\hat{C})$, it suffices to show that $V_a$ is flat over $\cO(\hat{C}_a)$. This follows from Proposition \ref{prop: V_a flat}.
\end{proof}

\begin{cor}\label{cor:completion of comm is flat}
    Assume $\ell$ is larger than the Coxeter number of any simple factors of $\hat{G}$, and $\ell\neq 19$ (resp. $\ell\neq 31$) when $\hat{G}$ has a simple factor of type $E_7$ (resp. $E_8$). The map
    \[
        \pi\colon\Comm_{\hat{G}}\to \hat{C}
    \]
    is flat at $u\in \hat{C}$. In particular, the scheme $\Comm_{\hat{G}} \times_{\hat{C}} \hat{C}^{\wedge}_u$ is flat over $\hat{C}^{\wedge}_u$ at all of the points of $\pi^{-1}(u)$, where $\hat{C}^\wedge_u$ is the completion of $\hat{C}$ at $u$.
\end{cor}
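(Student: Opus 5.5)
The plan is to deduce flatness at $u$ from flatness modulo every power $\mathfrak{m}^a$, using the local criterion of flatness. Let $x\in\pi^{-1}(u)$ be a point, and set $A=\cO(\hat{C})_{\mathfrak{m}}$ and $B=\cO_{\Comm_{\hat{G}},x}$. Then $B$ is a Noetherian local ring and $A\to B$ is a local homomorphism. The $\mathfrak{m}$-adic local criterion (\cite[Tag 0523]{stacks-project}) says $B$ is flat over $A$ provided $B/\mathfrak{m}^aB$ is flat over $A/\mathfrak{m}^a$ for all $a\geq 1$. Since $B/\mathfrak{m}^aB$ is a localization of $\cO(\Comm_{\hat{G}}\times_{\hat{C}}\hat{C}_a)$, it suffices to show the following: for every $a\geq1$, the map $\Comm_{\hat{G}}\times_{\hat{C}}\hat{C}_a\to\hat{C}_a$ is flat.

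Next I fix $a$ and choose a prime power $q$ with $\ell\mid q-1$ and $\ell^{v_\ell(q-1)}\geq Na$. For example, take $q=p^{m}$ with $p\neq\ell$ and $m$ divisible by $(\ell-1)\ell^{s}$ for $s$ large; lifting the exponent then gives $v_\ell(q-1)$ as large as needed. Let $G/\FF_q$ be the split group dual to $\hat{G}$. Because $\hat{G}$ has simply connected derived subgroup, $G$ has connected center. The hypotheses on $\ell$ are exactly those of Assumption \ref{assump: large ell}, so that assumption holds. Also $\ell\nmid|W|$, since $\ell$ exceeds the Coxeter numbers.

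Corollary \ref{cor: Comm^q C_a flat} then says $\Comm^q_{\hat{G}}\times_{\hat{C}^{[q]}}\hat{C}_a\to\hat{C}_a$ is flat. Here $\hat{C}_a\subseteq\hat{C}^{[q]}_u$ by Corollary \ref{cor: T_a->C_a flat}. Corollary \ref{cor:Comm q vs Comm} identifies this source with $\Comm_{\hat{G}}\times_{\hat{C}}\hat{C}_a$. I need to check that the identification is compatible with the maps to $\hat{C}_a$. This holds because both maps send $(x,y)$ to the class of $y$, and $[q]$ acts as the identity there by Lemma \ref{lemma: q=id}. This gives flatness modulo $\mathfrak{m}^a$ for every $a$.

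The "in particular" statement follows by base change. Flatness of $\pi$ at points of $\pi^{-1}(u)$ transfers to $\Comm_{\hat{G}}\times_{\hat{C}}\hat{C}^\wedge_u\to\hat{C}^\wedge_u$, since $\cO(\hat{C})\to\cO(\hat{C})^\wedge_{\mathfrak{m}}$ is flat and the local rings are compatible. Alternatively, one can apply the local criterion directly over the complete ring $\cO(\hat{C})^\wedge_{\mathfrak{m}}$, using the same quotients $\hat{C}_a$.

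The main obstacle is choosing $q$ correctly, so that the unconditional input of Corollary \ref{cor: Comm^q C_a flat} applies for each $a$ separately. The point to stress is that the constraint on $\ell$ depends only on $\hat{G}$ and not on $q$. So, although $q$ varies with $a$, the same hypotheses let us invoke Corollary \ref{cor: Comm^q C_a flat} for all $a$ at once. A second thing to watch is that the local criterion needs finiteness: $\Comm_{\hat{G}}$ must be of finite type over $\hat{C}$, which it is, so $B$ is Noetherian and $\mathfrak{m}$-adically separated in the required sense.
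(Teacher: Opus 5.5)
Your proposal is correct and follows the paper's proof. For each $a$ you choose $q$ with $\ell^{v_\ell(q-1)}\geq Na$ and combine Corollary~\ref{cor:Comm q vs Comm} with Corollary~\ref{cor: Comm^q C_a flat} to get flatness modulo $\mathfrak{m}^a$, then apply the local criterion. The only cosmetic difference is that you apply the local criterion directly to $\cO_{\hat{C},u}\to\cO_{\Comm_{\hat{G}},x}$ and obtain the completed statement by base change, whereas the paper first obtains flatness over $\hat{C}^\wedge_u$ and then descends along the faithfully flat map $\hat{C}^\wedge_u\to\hat{C}_{(u)}$.
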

\begin{proof}
    Fix $a\geq 0$. We can find a prime power $q$ such that $a<\floor{\frac{\ell^{v_\ell(q-1)}}{N}}$. Then by Corollary \ref{cor:Comm q vs Comm} and Corollary \ref{cor: Comm^q C_a flat}, the map
    \[
        \Comm_{\hat{G}}\times_{\hat{C}}\hat{C}_a\to \hat{C}_a
    \]
    is flat. Since $a$ is arbitrary, we find that $\Comm_{\hat{G}} \times_{\hat{C}} \hat{C}^\wedge_u \to \hat{C}^\wedge_u$ is flat. As the map $\hat{C}^\wedge_u\to\hat{C}_{(u)}$ to the localization at $u$ is faithfully flat, the map $\pi$ is flat at $u$.
\end{proof}

\section{Cohen--Macaulayness}\label{section: CM}

Let $\hat{G}$ be a split reductive group over $\overline{\FF}_\ell$ and let $G$ be its Langlands dual group. We assume that $\ell$ is larger than the Coxeter number of any simple factors of $\hat{G}$, and $\ell\neq 19$ (resp. $\ell\neq 31$) when $G$ has a simple factor of type $E_7$ (resp. $E_8$) throughout this section.

\subsection{Cohen--Macaulayness of the commuting scheme}

\newcommand{\Commu}{\Comm^u}
\begin{defn}
    We denote by $\Commu_{\ghat}$ the unipotent commuting scheme, defined to be the fiber product
    \[
        \begin{tikzcd}
        & \Commu_{\ghat} \arrow{r} \arrow{d}&\Comm_{\ghat}\arrow[d, "\pi"]\\
        & \Spec \flbar \arrow[r, "u"]& \hat{C}
        \end{tikzcd}
    \]
    where $\pi$ is \eqref{eq: Comm -> C}. We can write
    \[
        \Comm_{\hat{G}}^u=\{x\in \hat{G},y\in\cU_{\hat{G}}\,|\, xyx^{-1}=y\}
    \]
    where $\cU_{\hat{G}}=\hat{G}\times_{\hat{C}}\{u\}$ is the unipotent cone in $\hat{G}$.
\end{defn}

\begin{prop}\label{prop:unipotent LS is CM}
    Assume that the derived subgroup of $\hat{G}$ is simply connected. Then the scheme $\Comm^u_{\hat{G}}$ is Cohen--Macaulay.
\end{prop}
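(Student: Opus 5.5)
We plan to realize the structure sheaf of $\Comm^u_{\hat{G}}$ (up to the smooth $\hat{G}$-quotient) as the image under the finite Langlands functor of an honest representation in the heart, and then invoke Theorem \ref{thm: exact CM large ell}.(2). Choose a prime power $q$ with $\ell\mid q-1$ and $\floor{\ell^{v_\ell(q-1)}/N}\geq 1$; by Lemma \ref{lemma: q=id} we have $[q]=\mathrm{Id}$ on the unipotent cone $\cU_{\hat{G}}=\hat{G}\times_{\hat{C}}\{u\}$. Corollary \ref{cor:Comm q vs Comm} with $a=1$ then gives $\Comm^u_{\hat{G}}\simeq \Comm^q_{\hat{G}}\times_{\hat{C}^{[q]}}\{u\}$. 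Hence $\Comm^u_{\hat{G}}/\hat{G}\simeq \LS^{\widehat\unip}_{\hat{G}}\times_{\hat{C}^{[q]}_u}\{u\}$. Since $\hat{G}$ is smooth, Cohen--Macaulayness of $\Comm^u_{\hat{G}}$ is equivalent to that of the stack quotient. Here $G$ is the dual split group over $\FF_q$; it has connected center because $\hat{G}$ has simply connected derived subgroup, so Assumption \ref{assump: large ell} holds.

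Next we identify the structure sheaf of this fiber. By Corollary \ref{cor: F^fg linear} and Theorem \ref{thm: exact CM large ell}.(1), for $X=\Ch^{u-\mon}_{G,\phi}(\Xi_{\hat{u}})\simeq \Gamma^{\widehat\unip}_\psi\otimes_{\cO(\hat{C})}\cO(\hat{T})$ we have
\[
\cF^{0,c}_{G,\psi}\bigl(X\otimes^L_{\cO(\hat{C}^{[q]}_u)}\overline{\FF}_\ell\bigr)\simeq \cF^{0,c}_{G,\psi}(X)\otimes^L_{\cO(\hat{C}^{[q]}_u)}\overline{\FF}_\ell .
\]
We use flatness to replace derived tensor products by classical ones on both sides. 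By Corollary \ref{cor: Comm^q C_a flat} (for large $a$, and then passing to the limit as in Corollary \ref{cor:completion of comm is flat}), $\cO_{\LS^{\widehat\unip}_{\hat{G}}}$ is flat over $\cO(\hat{C}^{[q]}_u)$ near $u$. Similarly, Proposition \ref{prop: V_a flat} shows that $\Gamma^{\widehat\unip}_\psi$ is flat after restriction to $\hat{C}_a$.

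There is a subtlety here: $\cO(\hat{C}^{[q]}_u)$ is Artinian and $\hat{C}_a\subseteq\hat{C}^{[q]}_u$, so we should tensor in two stages. First base change to $\hat{C}_a$ with $a=\floor{\ell^{v_\ell(q-1)}/N}$, and then pass from $\cO(\hat{C}_a)$ to $\overline{\FF}_\ell$. Over $\cO(\hat{C}_a)$ both sides are flat. Flat base change then shows that
\[
V_1\otimes_{\cO(\hat{C})}\cO(\hat{T})=X\otimes_{\cO(\hat{C}^{[q]}_u)}\overline{\FF}_\ell
\]
lies in $\Rep^0_c(G(\FF_q),\overline{\FF}_\ell)^\heartsuit$. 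Its image is $\cO_{\LS^{\widehat\unip}_{\hat{G}}}\otimes_{\cO(\hat{C})}\cO(\hat{T})\otimes_{\cO(\hat{C}^{[q]}_u)}\overline{\FF}_\ell$, a classical sheaf with no higher Tor. The structure sheaf of the fiber is a direct summand of this sheaf by Lemma \ref{lemma: F_G big tilting}.

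By Theorem \ref{thm: exact CM large ell}.(2), the image of $V_1\otimes\cO(\hat{T})$ is maximal Cohen--Macaulay on $\LS^{\widehat\unip}_{\hat{G}}$, and so is its direct summand $\cO_{\mathrm{fiber}}$. It remains to convert this into Cohen--Macaulayness of the fiber itself. A coherent sheaf on $\LS^{\widehat\unip}_{\hat{G}}$ supported on a closed substack is maximal Cohen--Macaulay as a module on its support exactly when it has the expected depth. Concretely, the self-duality $\DD'_\GS$ of $\cF^{0,c}(V)$ coming from \eqref{eq: D_GS F^c}, together with $\omega\simeq\cO$, shows that $R\underline{\Hom}(\cO_{\mathrm{fiber}},\omega)$ is concentrated in a single degree. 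That is exactly the statement that the fiber is Cohen--Macaulay, by \cite[Tag 0B5A]{stacks-project}.

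The main obstacle is the bookkeeping in this last conversion. The Theorem gives maximal Cohen--Macaulayness relative to $\LS^{\widehat\unip}_{\hat{G}}$ (of dimension $\dim\hat{G}-\dim\hat{G}=0$ as a stack, i.e. $\Comm^q$ of dimension $\dim\hat{G}$). The fiber has codimension $\mathrm{rank}$, so we must check that the concentration of the dual in one degree translates correctly, with the shift by the codimension. We will also need to check that the two-stage base change genuinely reaches the fiber at $u$ inside the Artinian base $\hat{C}^{[q]}_u$.
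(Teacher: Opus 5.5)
Your overall architecture matches the paper's proof. You identify $\Comm^u_{\hat G}/\hat G$ with $\LS^{\widehat{\unip}}_{\hat G}\times_{\hat C^{[q]}_u}\{u\}$ via Corollary \ref{cor:Comm q vs Comm} with $a=1$. You realize its structure sheaf as a direct summand of $\cF^{0,c}_{G,\psi}$ applied to a heart object built from $X=\Ch^{u-\mon}_{G,\phi}(\Xi_{\hat u})$, and then invoke Theorem \ref{thm: exact CM large ell}.(2).

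\textbf{The gap: your base-change step rests on false flatness claims.}
\begin{itemize}
\item \emph{$X$ is not flat over $\cO(\hat C^{[q]}_u)$.} Take $\hat G=\mathrm{SL}_2$ and $q-1=\ell^r b$ with $\ell\nmid b$. Then $\cO(\hat T^{[q]}_u)=\overline{\FF}_\ell[s]/(s-1)^{\ell^r}$ and $\cO(\hat C^{[q]}_u)=\overline{\FF}_\ell[t]/(t-2)^{(\ell^r+1)/2}$. The former is not free over the latter: the rank would have to be $|W|=2$, forcing length $\ell^r+1$.
\item Since $\Hom(\widetilde R_1,\Gamma^{\widehat{\unip}}_\psi)\simeq\cO(\hat T^{[q]}_u)$ and $\widetilde R_1$ is a projective generator of the block, $\Gamma^{\widehat{\unip}}_\psi$ is not flat, hence neither is $X$. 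The ring $\cO(\hat C^{[q]}_u)$ is Artinian and not regular, so $X\otimes^L_{\cO(\hat C^{[q]}_u)}\overline{\FF}_\ell$ is unbounded. It does not lie in the source of $\cF^{0,c}_{G,\psi}$, so your first display is not defined.
\item \emph{$\cO_{\LS^{\widehat{\unip}}_{\hat G}}$ is not flat over $\cO(\hat C^{[q]}_u)$ either.} For $\mathrm{SL}_2$, near $(x,1)$ with $x$ regular semisimple, $\Comm^q_{\hat G}$ is \'etale-locally a smooth scheme times $\hat T^{[q]}_u$.
\item Corollary \ref{cor: Comm^q C_a flat} gives flatness only over $\hat C_a$ with $a\le\floor{\ell^{v_\ell(q-1)}/N}$ for the fixed $q$, so you cannot take $a$ large. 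The limit in Corollary \ref{cor:completion of comm is flat} varies $q$ and concerns $\Comm_{\hat G}$, not $\LS^{\widehat{\unip}}_{\hat G}$.
\item Your two-stage repair does not help: the first stage $\cO(\hat C^{[q]}_u)\to\cO(\hat C_a)$ is exactly where flatness fails.
\end{itemize}

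\textbf{The fix: no flatness is needed; use classical tensor products.}
\begin{itemize}
\item Let $f_1,\dots,f_n$ generate the maximal ideal of $\cO(\hat C^{[q]}_u)$. Then $X\otimes_{\cO(\hat C^{[q]}_u)}\overline{\FF}_\ell$ is the cokernel of $(f_1,\dots,f_n)\colon X^{\oplus n}\to X$. It therefore lies in $\Rep^0_c(G(\FF_q),\overline{\FF}_\ell)^\heartsuit$ automatically.
\item $\cF^{0,c}_{G,\psi}$ is $t$-exact (Theorem \ref{thm: exact CM large ell}.(1)) and $\cO(\hat C^{[q]}_u)$-linear (Corollary \ref{cor: F^fg linear}). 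Hence it carries this cokernel to the corresponding cokernel for $\cF^{0,c}_{G,\psi}(X)$, giving $\cF^{0,c}_{G,\psi}(X\otimes\overline{\FF}_\ell)\simeq\cF^{0,c}_{G,\psi}(X)\otimes\overline{\FF}_\ell$.
\item $\cO_{\Comm^u_{\hat G}/\hat G}=\cO_{\LS^{\widehat{\unip}}_{\hat G}}\otimes\overline{\FF}_\ell$ is a direct summand of this, because the splitting in Lemma \ref{lemma: F_G big tilting} is one of $\cO_{\LS^{\widehat{\unip}}_{\hat G}}$-modules.
\end{itemize}
This is exactly the paper's proof.

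\textbf{There is no codimension shift to track.} $\hat C^{[q]}_u$ is a one-point Artinian scheme, so the fiber over $u$ is cut out of $\LS^{\widehat{\unip}}_{\hat G}$ by a nilpotent ideal and has codimension $0$. Both $\Comm^u_{\hat G}$ and $\Comm^q_{\hat G}$ have dimension $\dim\hat G$. Codimension equal to the rank is the picture inside $\Comm_{\hat G}$, which is not the ambient space here. So maximal Cohen--Macaulayness of $\cO_{\Comm^u_{\hat G}/\hat G}$ as an $\cO_{\LS^{\widehat{\unip}}_{\hat G}}$-module directly says the fiber is Cohen--Macaulay. Your appeal to Stacks Project Tag 0B5A already suffices.
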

\begin{proof}
    We fix a prime power $q$ such that $\floor{\frac{\ell^{v_\ell(q-1)}}{N}}\geq 1$. Let $G$ be the split reductive group over $\FF_q$ with dual group $\hat{G}$. By Corollary \ref{cor:Comm q vs Comm}, we know that
    \[
        \Comm^q_{\hat{G}}\times_{\hat{C}^{[q]}}\{u\}\simeq \Comm^u_{\hat{G}}.
    \]
    As in the proof of Corollary \ref{cor: Comm^q C_a flat}, the sheaf
    \[
        \cO_{\Comm_{\hat{G}}^u/\hat{G}}\simeq \cO_{\LS^{\widehat\unip}_{\hat{G}}}\otimes_{\cO(\hat{C}^{[q]}_u)}\overline{\FF}_\ell
    \]
    is a direct summand of 
    \[\cF_{G,\psi}^{0,c}(\Ch^{u-\mon}_{G,\phi}(\Xi_{\hat{u}}))\otimes_{\cO(\hat{C}^{[q]}_u)}\overline{\FF}_\ell\simeq \cF_{G,\psi}^{0,c}(\Ch^{u-\mon}_{G,\phi}(\Xi_{\hat{u}})\otimes_{\cO(\hat{C}^{[q]}_u)}\overline{\FF}_\ell),\]
    and therefore is Cohen--Macaulay by Theorem \ref{thm: exact CM large ell}.(2). Therefore $\Comm_{\hat{G}}^u$ is Cohen--Macaulay.
\end{proof}

We next remove the assumption that the derived subgroup is simply connected by descent along an \'etale central isogeny. 

\begin{lemma}\label{lemma: Comm^u isog}
    Let $\hat{G} \to \hat{G}'$ be an \'etale central isogeny of reductive algebraic groups over a field $\overline{\FF}_\ell$ such that $\ell \nmid |W|$, then the natural map $\Commu_{\hat{G}} \to \hat{G} \times_{\hat{G}'} \Commu_{\hat{G}'}$ is an isomorphism.
\end{lemma}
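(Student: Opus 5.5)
Write $p\colon \hat{G}\to\hat{G}'$ for the isogeny and $Z=\ker p$, a finite étale central subgroup scheme. Since $p$ is étale and $k=\overline{\FF}_\ell$ is algebraically closed, $Z$ is a constant finite group of order prime to $\ell$. Write $\cU_{\hat{G}}$ and $\cU_{\hat{G}'}$ for the unipotent cones, i.e.\ the scheme-theoretic fibers of the Chevalley maps over $u$. I will interpret $\hat{G}\times_{\hat{G}'}\Commu_{\hat{G}'}$ as the fiber product along the first coordinate $x$. The map $\Commu_{\hat{G}}\to\hat{G}\times_{\hat{G}'}\Commu_{\hat{G}'}$ is $(x,y)\mapsto (x,(p(x),p(y)))$. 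Concretely, the statement says two things. First, $p$ restricts to an isomorphism $\cU_{\hat{G}}\xrightarrow{\sim}\cU_{\hat{G}'}$ of schemes. Second, given $x$, a unipotent $y$ commutes with $x$ exactly when $p(y)$ commutes with $p(x)$.

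\textbf{Step 1 (unipotent cones).} I show that $p\colon\cU_{\hat{G}}\to\cU_{\hat{G}'}$ is an isomorphism. The isogeny induces $\hat{T}\to\hat{T}'$ with kernel $Z$, hence a finite map $\hat{C}\to\hat{C}'$ sending $u$ to $u'$. The square
\[
\hat{G}\to\hat{G}',\qquad \hat{C}\to\hat{C}'
\]
commutes, so $p$ maps $\cU_{\hat{G}}$ into $\cU_{\hat{G}'}\times_{\hat{C}'}\hat{C}$. Both $\hat{G}\to\hat{G}'$ and $\hat{T}\git W\to\hat{T}'\git W$ are $Z$-torsors, the latter at least near $u$: $Z$ acts freely on $\hat{T}$, and no $W$-translate of a point near $1$ is a nontrivial $Z$-translate because $Z$ is central. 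This gives $\hat{G}\simeq\hat{G}'\times_{\hat{C}'}\hat{C}$ over a neighborhood of $u$. Taking fibers over $u$ then yields $\cU_{\hat{G}}\simeq\cU_{\hat{G}'}$.

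Establishing $\hat{G}\simeq\hat{G}'\times_{\hat{C}'}\hat{C}$ near the unipotent fiber is where I expect the main difficulty. My first attempt is to use that $\hat{C}\to\hat{C}'$ is étale at $u$: it is the quotient by the free $Z$-action translating $u$ to distinct points $zu$. Then $\hat{G}'\times_{\hat{C}'}\hat{C}\to\hat{G}'$ is an étale cover of degree $|Z|$ near $\cU_{\hat{G}'}$, and $\hat{G}$ maps to it. Both sides are $Z$-torsors over the same base, via the $Z$-equivariant map, so the map is an isomorphism. The torsor structure on the fiber product comes from the $Z$-torsor $\hat{C}\to\hat{C}'$ restricted near $u$, which should hold because $\hat{C}\to\hat{C}'$ is the quotient by translation by $Z$ and $\ell\nmid|Z|$. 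The hypothesis $\ell\nmid|W|$ is convenient here for the classical Steinberg-type identification $\hat{C}=\hat{T}/W$ and its compatibility with the $Z$-quotient, though I expect to use it only mildly.

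\textbf{Step 2 (commutation).} Let $R$ be a test algebra, $x\in\hat{G}(R)$ and $y\in\cU_{\hat{G}}(R)$ with $p(x)p(y)p(x)^{-1}=p(y)$. Then $xyx^{-1}=zy$ for a unique $z\in Z(R)$, since $Z$ is étale and the fibers of $p$ are $Z$-torsors. Now $xyx^{-1}$ lies in $\cU_{\hat{G}}$, because $\cU_{\hat{G}}$ is conjugation stable. Also $zy$ lies over $zu\in\hat{C}$, the Chevalley map being $Z$-equivariant. As $zu\neq u$ for $z\neq 1$, and $Z$ is locally constant on $\Spec R$, we get $z=1$ locally on $R$. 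Hence $xyx^{-1}=y$.

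\textbf{Step 3 (assembly).} Combining Steps 1 and 2, an $R$-point of $\hat{G}\times_{\hat{G}'}\Commu_{\hat{G}'}$ is a pair consisting of $x$ and a commuting pair $(p(x),y')$ with $y'$ unipotent. By Step 1, $y'$ lifts uniquely to $y\in\cU_{\hat{G}}(R)$, and by Step 2 this $y$ commutes with $x$. This construction is inverse to the natural map, which is therefore an isomorphism.
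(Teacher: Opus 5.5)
Your proof is correct and has the same structure as the paper's, which cites \cite[Lemma 4.10]{Springer} for the isomorphism $\cU_{\hat G}\xrightarrow{\sim}\cU_{\hat G'}$ and calls the rest clear. Your Step 2 (the commutator lies in $Z$ and is trivial because $zu\neq u$) is exactly that ``clear'' part. Your Step 1 replaces the citation by a self-contained $Z$-torsor argument; the point you left hedged, $\hat C'=\hat C/Z$ near $u$, follows from $\cO(\hat T')^W=(\cO(\hat T)^W)^Z$ and Chevalley restriction. Your argument in fact uses only that $p$ is \'etale, not $\ell\nmid|W|$.
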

\begin{proof}
    By \cite[Lemma 4.10]{Springer}, the map on unipotent cones $\cU_{\hat{G}} \to \cU_{\hat{G}'}$ is an isomorphism. Then the statement is clear.
\end{proof}

In particular, $\Commu_{\hat{G}}$ is Cohen--Macaulay if and only if the same is true for $\Commu_{\hat{G}'}$. 

\begin{cor}\label{cor: Comm u CM}
    The unipotent commuting scheme $\Commu_{\hat{G}}$ is Cohen--Macaulay. 
\end{cor}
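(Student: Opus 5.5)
We reduce to Proposition~\ref{prop:unipotent LS is CM} by passing to a group with simply connected derived subgroup, using Lemma~\ref{lemma: Comm^u isog} together with the remark after it. For an arbitrary split reductive $\hat{G}$ over $\overline{\FF}_\ell$ (satisfying the standing hypotheses of this section), we take a $z$-extension. That is, we choose a central extension $1\to Z\to \hat{G}_1\to \hat{G}\to 1$ with $Z$ a torus and $\hat{G}_1$ having simply connected derived subgroup. Alternatively, we can use $\hat{G}_1 = \hat{G}^{\mathrm{sc}}_{\mathrm{der}}\times Z(\hat{G})^\circ$. The isogeny $\hat{G}_1 = \hat{G}^{\mathrm{sc}}_{\mathrm{der}}\times Z(\hat{G})^\circ\to\hat{G}$ is central, and its kernel is a finite group scheme of order dividing $|\pi_1(\hat{G}_{\mathrm{der}})|\cdot|Z(\hat G)^\circ\cap \hat G_{\mathrm{der}}|$. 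Under the hypothesis that $\ell$ exceeds the Coxeter numbers, $\ell\nmid|W|$, and the order of the kernel involves only primes dividing $|W|$ (torsion of the fundamental group and of the centre of the derived group are bounded by the Coxeter data). So this isogeny is étale.

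The main steps are as follows. First, check that the derived subgroup of $\hat{G}_1$ is simply connected and that $\ell$ satisfies the hypotheses of the section for $\hat{G}_1$. The simple factors of $\hat{G}_1$ are the same as those of $\hat{G}$, so the conditions on the Coxeter numbers and on $E_7$, $E_8$ are unchanged. Proposition~\ref{prop:unipotent LS is CM} then shows that $\Commu_{\hat{G}_1}$ is Cohen--Macaulay.

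Second, apply Lemma~\ref{lemma: Comm^u isog} to the étale central isogeny $\hat{G}_1\to\hat{G}$. This gives $\Commu_{\hat{G}_1}\simeq \hat{G}_1\times_{\hat{G}}\Commu_{\hat{G}}$, where the base change is along the $x$-coordinate. The projection $\Commu_{\hat{G}_1}\to\Commu_{\hat{G}}$ is therefore a base change of the finite étale surjection $\hat{G}_1\to\hat{G}$, so it is itself finite étale and surjective. Cohen--Macaulayness descends along faithfully flat maps with Cohen--Macaulay source: if $A\to B$ is flat local and $B$ is Cohen--Macaulay, then so is $A$. So $\Commu_{\hat{G}}$ is Cohen--Macaulay. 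This descent is exactly the remark following Lemma~\ref{lemma: Comm^u isog}.

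The main obstacle is justifying that the isogeny is étale, i.e.\ that $\ell$ does not divide the order of its kernel. I would verify this by the standard bound: $|\pi_1(\hat{G}_{\mathrm{der}})|$ divides the order of the fundamental group of the root system, which divides $|W|$. Likewise, the finite group $Z(\hat{G})^\circ\cap\hat{G}_{\mathrm{der}}$ embeds in the centre of $\hat{G}_{\mathrm{der}}$, whose order divides the index of connection and hence $|W|$. Since $\ell>h\geq$ every prime dividing $|W|$, the kernel has order prime to $\ell$. If the product construction proves awkward, I would instead first pass to the derived group's simply connected cover through a $z$-extension. In that case $Z$ is a torus and the map $\hat{G}_1\to\hat{G}$ is smooth with connected fibres rather than étale, but the same flat-descent argument applies. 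This works once one checks that $\Commu_{\hat{G}_1}\to\Commu_{\hat{G}}$ is a $Z\times Z$-torsor-like smooth surjection, using that unipotent cones agree.
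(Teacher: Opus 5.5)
Your proposal is correct and follows the paper's proof. The paper also uses the central isogeny $\hat{G}_{\mathrm{sc}}\times Z(\hat{G})^\circ\to\hat{G}$, which is \'etale because $\ell\nmid|W|$, applies Proposition~\ref{prop:unipotent LS is CM} to the source, and concludes via Lemma~\ref{lemma: Comm^u isog}; the faithfully flat descent of the Cohen--Macaulay property that you spell out is the step the paper leaves implicit. The $z$-extension fallback is not needed, and in that case the fibres of the map on unipotent commuting schemes would be $Z$-torsors rather than $Z\times Z$-torsors, since the unipotent coordinate lifts uniquely.
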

\begin{proof}
    By assumption on $\ell$, the map $\widetilde{\hat{G}}=\hat{G}_{\mathrm{sc}}\times Z(\hat{G})^\circ\to \hat{G}$ is an \'etale central isogeny, where $\hat{G}_{\mathrm{sc}}$ is the simply connected cover of $\hat{G}_\mathrm{der}$. By Proposition \ref{prop:unipotent LS is CM}, the scheme $\Commu_{\widetilde{\hat{G}}}$ is Cohen--Macaulay. Therefore $\Commu_{\hat{G}}$ is Cohen--Macaulay by Lemma \ref{lemma: Comm^u isog}.
\end{proof}

\begin{thm}\label{thm:CM}
    The commuting scheme $\Comm_{\hat{G}}$ is Cohen--Macaulay.
\end{thm}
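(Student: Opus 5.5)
We follow the strategy from the introduction: by \cite[Tag 045J]{stacks-project}, a flat morphism of locally Noetherian schemes with Cohen--Macaulay base and Cohen--Macaulay fibers has Cohen--Macaulay source. The base $\hat{C}$ is Cohen--Macaulay; since $\ell\nmid|W|$ and the derived subgroup can be arranged to be simply connected, $\hat{C}\simeq\hat{T}\git W$ is smooth. So it suffices to show that every point $z\in\Comm_{\hat{G}}$ has Cohen--Macaulay local ring.

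\textbf{Step 1 (reductions).} First reduce to a form of $\hat{G}$ with simply connected derived subgroup. Use the étale central isogeny $\widetilde{\hat{G}}=\hat{G}_{\mathrm{sc}}\times Z(\hat{G})^\circ\to\hat{G}$ from Corollary \ref{cor: Comm u CM}. Locally on the commuting scheme this isogeny should induce an étale-local comparison. Specifically, $\Comm_{\widetilde{\hat{G}}}\to\Comm_{\hat{G}}$ is étale onto the open and closed locus of pairs whose commutator lift is trivial. Alternatively, work directly with the map $\pi$ for $\hat{G}$ and only use the simply connected hypothesis where Corollary \ref{cor:completion of comm is flat} needs it. Since Cohen--Macaulayness is local and the locus of non-CM points is closed and $\hat{G}\times\hat{G}$-stable (or at least stable under the central torus translations), it suffices to treat points lying over a neighborhood of every point of $\hat{C}$.

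\textbf{Step 2 (translation to $u$).} For a closed point $(x,y)\in\Comm_{\hat{G}}$, write the Jordan decomposition $y=y_sy_u$ with $y_s$ semisimple. Étale locally near $(x,y)$, the commuting scheme of $\hat{G}$ should be isomorphic to a neighborhood of $(x,y_u)$ in the commuting scheme of the connected centralizer $H=Z_{\hat{G}}(y_s)^\circ$, up to the finite group $\pi_0$. This is a Luna-slice type argument: pairs commuting with $y$ near $y$ have $y$-component with semisimple part near $y_s$, so after conjugating they lie in $H$. Multiplying by the central element $y_s^{-1}$ of $H$ moves the image point to the identity of $\hat{C}_H$. $H$ is again reductive, and under our bound on $\ell$ (good and larger than the Coxeter numbers of its factors, which are at most those of $\hat{G}$) the hypotheses persist. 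Hence it suffices to prove Cohen--Macaulayness at points of $\pi^{-1}(u)$, for all such groups.

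\textbf{Step 3 (the fiber over $u$).} At points of $\pi^{-1}(u)$, Corollary \ref{cor:completion of comm is flat} gives flatness of $\pi$ at $u$, and Corollary \ref{cor: Comm u CM} gives that the fiber $\Commu_{\hat{G}}$ is Cohen--Macaulay. Since $\hat{C}$ is regular at $u$, the local criterion (\cite[Tag 045J]{stacks-project}) shows that $\cO_{\Comm_{\hat{G}},z}$ is Cohen--Macaulay for all $z\in\pi^{-1}(u)$. Non-closed points then follow because the CM locus is open and every point specializes to a closed point.

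\textbf{Main obstacle.} We expect Step 2, the étale-local reduction from an arbitrary semisimple part to the identity, to be the main obstacle. Making it precise requires an étale slice for the map $\Comm_{\hat{G}}\to\hat{C}$, namely the map $\hat{G}\times^{Z(y_s)}\Comm_{Z(y_s)}\to\Comm_{\hat{G}}$ being étale near $(x,y)$. It also requires handling disconnected centralizers, where $\Comm_{Z(y_s)}$ includes an $x$ in a nonidentity component. For $\ell$ in our range, centralizers of semisimple elements in groups with simply connected derived subgroup are connected (Steinberg), which removes the $\pi_0$ issue. That is why we would first reduce to simply connected derived subgroup, descending at the end via the isogeny as in Lemma \ref{lemma: Comm^u isog}.
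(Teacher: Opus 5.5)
Your Step 3 is fine, and so is the observation behind your one-variable slice. When $Z_{\hat G}(y_s)$ is connected, slicing in $y$ alone suffices, because Corollaries \ref{cor:completion of comm is flat} and \ref{cor: Comm u CM} cover all of $\pi^{-1}(u)$, not just unipotent pairs.

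The gap is in how you dispose of disconnected centralizers. The isogeny in Step 1 only reaches the open and closed piece of $\Comm_{\hat G}$ on which the commutator of lifts is trivial. That piece is exactly $\{(x,y) : x\in Z_{\hat G}(y_s)^\circ\}$, for two reasons:
\begin{enumerate}
\item Lifts of commuting unipotent parts commute, since $z\tilde v$ is not unipotent for $\tilde v$ unipotent and central $z\neq 1$. Hence $[\tilde x,\tilde y]=[\tilde x,\tilde y_s]$.
\item Since $Z_{\widetilde{\hat G}}(\tilde y_s)$ is connected (Steinberg), the map $g\mapsto[\tilde g,\tilde y_s]$ identifies $\pi_0(Z_{\hat G}(y_s))$ with a subgroup of $\ker(\widetilde{\hat G}\to\hat G)$.
\end{enumerate}
So Step 1 does not remove the $\pi_0$ problem. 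It isolates it in the complementary open and closed pieces, which neither Step 1 nor the connected-centralizer slice of Step 2 ever reaches. These pieces are nonempty in general: for $\mathrm{PGL}_n$, take the images of the clock and shift matrices, whose lifts to $\mathrm{SL}_n$ commute only up to a primitive $n$-th root of unity. Theorem \ref{thm:CM} is stated for arbitrary connected reductive $\hat G$, so these points must be treated. Working directly with $\hat G$ does not help either. At such a point your slice lands at $(x,y_u)$ with $x\notin Z_{\hat G}(y_s)^\circ$, where the local model is a twisted commuting scheme for $Z_{\hat G}(y_s)^\circ$ with automorphism $\mathrm{Ad}(x^{-1})$. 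Nothing in the paper controls that scheme.

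The missing idea is the paper's second slice, in the $x$-variable. After passing to $L=Z_{\hat G}(y_s)$ and translating $y$ to $y_u$, pass to $M=Z_L(x_s)$ and translate $x$ to $x_u$. Both $x_u$ and $y_u$ are unipotent, hence lie in $M^\circ$. So $\Comm^\wedge_{\hat G,(x,y)}$ is a formal affine space times $\Comm^\wedge_{M^\circ,(x_u,y_u)}$ with $M^\circ$ connected, and every point is reached uniformly.

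There is also a second, smaller issue. The connected group you land in ($H$ for you, $M^\circ$ in the paper) need not have simply connected derived subgroup even if $\hat G$ does; $\mathrm{SO}_4\subset G_2$ is an example. So Corollary \ref{cor:completion of comm is flat} does not apply to it directly, contrary to ``the hypotheses persist''. The paper repairs this locally. Near the unipotent fibre the commutator of lifts is automatically trivial (by the argument above, or Lemma \ref{lemma: Comm^u isog}). Hence $\Comm_{\widetilde H}\to\Comm_H$ is finite \'etale surjective there, and $\widetilde{\hat C}\to\hat C$ is smooth at $u$, which transports flatness at $u$ from $\widetilde H$ to $H$. With these two repairs, your Step 3 concludes exactly as in the paper.
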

\begin{proof}
    The proof proceeds, roughly speaking, by a standard reduction to the completion of the unipotent cone. First let $(g, h) \in \Comm_{\ghat}(k)$ for an algebraically closed field $k$ over $\overline{\FF}_\ell$, then by the Jordan decomposition we may write
    \[
        g = su, h = tv
    \]
    where $s, t$ are semisimple and $u, v$ are unipotent. We define $\hat{L} = Z_{\hat{G}}(t)$. By \cite[Corollary 9.4]{Steinberg1968}, $\hat{L}$ is a possibly disconnected reductive group. We claim that there is a natural map 
    \[
        \Comm^{\wedge}_{\hat{G}, (g, h)} \to \mathbb{A}^{\dim(\hat{G}) - \dim(\hat{L}), \wedge}_0 \times \Comm^{\wedge}_{\hat{L}, (t^{-1}g, v)},
    \]
    which will turn out to be an isomorphism. Indeed if $\hat{\mathfrak{l}} = \lie(\hat{L})$ then we may choose a local smooth transversal $T$ to $\hat{L}$ at the identity, such that on tangent spaces $T_1(T) \oplus \hat{\mathfrak{l}} = \hat{\mathfrak{g}}$ and $T_1(T)$ is the sum of the non-trivial eigenspaces of $t$ on $\hat{\mathfrak{g}}$. Then the conjugation action of $T$ on $\hat{L}$
    \[
        \operatorname{Ad}_T\colon T \times \hat{L} \to \hat{G}
    \]
    induces an isomorphism $T^{\wedge}_1 \times \hat{L}_h^{\wedge} \to \hat{G}^{\wedge}_h$. Now taking $R$ an Artinian local $k$-algebra we can take $(a, b) \in \Comm_{\hat{G}}(R)$ lifting $(g, h)$. Using the above isomorphism we may uniquely write $b = xlx^{-1}$ for $l \in \hat{L}(R), x \in T(R)$, where $l$ reduces to $h$ and $x$ to $1$ modulo the maximal ideal of $R$. Then if we conjugate by $x$ we get a commuting pair $(x^{-1}ax, l)$, and we wish to show that $a_0 = x^{-1}ax \in \hat{L}(R)$. But again by inspection of the map on tangent spaces, the map $m|_{T \times \hat{L}}\colon T \times \hat{L} \to \hat{G}$ induced by multiplication gives a set of coordinates on $\hat{G}^{\wedge}_g$ around $g$, so we find that $a_0 = yl'$ for some $l' \in \hat{L}(R)$ reducing to $g$ modulo the maximal ideal, and $y \in T(R)$ reducing to the identity. Then $l'll'^{-1} = y^{-1}ly$, but since the left-hand side is in $\hat{L}(R)$, we find that $y$ must be trivial. The map $(a, b) \mapsto (x, x^{-1}ax, l)$ gives an isomorphism 
    \[
        \Comm^{\wedge}_{\hat{G}, (g, h)} \to \mathbb{A}^{\dim(\hat{G}) - \dim(\hat{L}), \wedge}_0 \times \Comm^{\wedge}_{\hat{L}, (g, h)},
    \]
    and since $t \in \hat{L}(k)$ is central we have a further isomorphism 
    \[
        \Comm^{\wedge}_{\hat{L}, (g, h)} \to \Comm^{\wedge}_{\hat{L}, (t^{-1}g, v)}.
    \]
    Let $\hat{M} = Z_{\hat{L}}(s)$, where $s$ is the semisimple part of $g$. Then $\hat{M}$ is a possibly disconnected reductive group again by \cite[Corollary 9.4]{Steinberg1968}. Applying the same reduction to the first coordinate gives 
    \[
        \Comm_{\hat{G}, (g, h)}^{\wedge} \simeq \mathbb{A}^{\dim(\hat{G}) - \dim(\hat{M}), \wedge}_0 \times \Comm_{\hat{M}, (u, v)}^{\wedge} \simeq \mathbb{A}^{\dim(\hat{G}) - \dim(\hat{M}), \wedge}_0 \times \Comm^{\wedge}_{\hat{M}^\circ, (u, v)}
    \]
    where the last isomorphism follows since now $u, v$ are both unipotent. We thus reduce to showing that if $\hat{G}$ is connected reductive and $(u, v)$ are unipotent elements of $\hat{G}$, then $\Comm_{\hat{G}, (u, v)}^{\wedge}$ is Cohen--Macaulay. Now we note that since $\ell\nmid |W|$ (and this condition passes along our reduction procedure to connected components of centralizers of semisimple elements) in particular $\ell \nmid |\pi_1(\hat{G}_{\mathrm{der}})|$, thus $\hat{T}\git W$ is smooth at the identity by \cite[Lemma 4.2]{Springer}. Finally take $\widetilde{\hat{G}} \to\hat{G}$ an \'etale cover with simply connected derived group, and denote by $Z$ the kernel of this isogeny. The map $\widetilde{\hat{T}}\git W \to \hat{T}\git W$ is smooth at the identity, and the base change
    \[
        \begin{tikzcd}
            &X \arrow{r}\arrow{d} & \widetilde{\hat{G}}^2 \arrow{d}\\
            &\Comm_{\hat{G}} \arrow{r} & \hat{G}^2
        \end{tikzcd}
    \]
    parameterizes pairs of elements $(a, b) \in \widetilde{\hat{G}}^2$ with commutator inside of $Z$. Since $Z$ is a finite \'etale group scheme, $\Comm_{\widetilde{\hat{G}}}$ is closed and open in $X$, and the fiber of $\Comm_{\widetilde{\hat{G}}}$ over the identity in $\widetilde{\hat{T}}\git W$ maps surjectively onto the analogous fiber for $\Comm_{\hat{G}}$ by Lemma \ref{lemma: Comm^u isog}, so we find that $\Comm_{\widetilde{\hat{G}}} \to \Comm_{\hat{G}}$ is finite \'etale surjective in a neighborhood of the unipotent fiber. Thus the map $\Comm_{\hat{G}} \to \hat{T}\git W$ is flat at the identity. Cohen--Macaulayness of $\Comm_{\hat{G}, (u, v)}^{\wedge}$ now follows from the fact that $\Commu_{\hat{G}}$ is Cohen--Macaulay by Corollary \ref{cor: Comm u CM} and \cite[Tag 045J]{stacks-project}. 
\end{proof}

\begin{cor}\label{cor: flatness of pi}
    Suppose that the derived group of $\ghat$ is simply connected. Then the morphism
    \[
        \pi: \Comm_{\ghat} \to \hat{C}
    \]
    is flat.
\end{cor}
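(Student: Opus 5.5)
Since flatness is local on the source, the plan is to show that $\pi$ is flat at every closed point $(g,h)\in\Comm_{\ghat}$; the standing assumptions on $\ell$ and on $\ghat$ are those of Section~\ref{section: CM}. Fix such a point and write $h=tv$ (Jordan decomposition), with image $c=\pi(g,h)=\chi(t)\in\hat{C}$. The strategy is to reduce flatness at $(g,h)$ to flatness at the identity, which is Corollary~\ref{cor:completion of comm is flat}, using the local product decomposition from the proof of Theorem~\ref{thm:CM}.

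\textbf{Step 1: completed local rings.} As in the proof of Theorem~\ref{thm:CM}, set $\hat{L}=Z_{\ghat}(t)$. Because the derived group of $\ghat$ is simply connected, $\hat{L}$ is connected by Steinberg. We have the isomorphism
\[
\Comm^{\wedge}_{\ghat,(g,h)}\simeq \mathbb{A}^{\dim\ghat-\dim\hat{L},\wedge}_0\times\Comm^{\wedge}_{\hat{L},(t^{-1}g,v)}.
\]
Under it, the second coordinate $b=xlx^{-1}$ has the same Chevalley image as $l$. The map $\pi$ therefore corresponds to $(x,l)\mapsto \chi_{\ghat}(l)$, which factors as $\chi_{\hat L}$ composed with $\hat{C}_{\hat L}\to\hat{C}$. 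The factor $\mathbb{A}^{\wedge}$ is flat over the base, so it can be ignored.

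\textbf{Step 2: étaleness of $\hat{C}_{\hat L}\to\hat{C}$ at $\chi_{\hat L}(t)$.} I need this map to be étale, i.e.\ $\hat{T}\git W_{\hat L}\to\hat{T}\git W$ étale at $t$. This holds because $W_{\hat L}$ is exactly the stabilizer of $t$ in $W$, which again uses that the derived group is simply connected. Since $t$ is central in $\hat{L}$, translating by $t^{-1}$ identifies the situation with the point $(t^{-1}g,v)$ lying over the identity class $u_{\hat L}$ of $\hat{C}_{\hat L}$. Flatness of $\pi$ at $(g,h)$ thus reduces to flatness of $\pi_{\hat L}\colon\Comm_{\hat L}\to\hat{C}_{\hat L}$ at points over $u_{\hat L}$.

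\textbf{Step 3: apply the identity case.} The group $\hat{L}$ is connected reductive and has simply connected derived group, since it is a centralizer of a semisimple element in such a group. The hypotheses on $\ell$ pass to $\hat{L}$: its Coxeter numbers are at most those of $\ghat$, and any $E_7$ or $E_8$ factor of $\hat L$ forces the same factor in $\ghat$. Corollary~\ref{cor:completion of comm is flat} therefore applies to $\hat{L}$ and gives flatness at $u_{\hat L}$, and hence at every point of the fiber over it. Flatness of the completed local map then descends to flatness of the local ring map, by faithful flatness of completion.

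The main obstacle is Step 2: keeping track of how $\pi$ transforms under the product decomposition, and justifying that the map of Chevalley quotients is étale at $t$ in positive characteristic. For the latter I would invoke Steinberg's result on $\cO(\hat T)$ being free over $\cO(\hat T)^{W_{\hat L}}$ together with the stabilizer computation.
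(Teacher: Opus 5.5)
The gap is in Step 3, and it also undermines your justification of Step 2. You claim that $\hat L=Z_{\ghat}(t)$ has simply connected derived group because it is the centralizer of a semisimple element in a group whose derived group is simply connected. Steinberg's theorem only gives that $\hat L$ is connected; its derived group need not be simply connected.

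For instance, take $\ghat=G_2$ (so $\ell>6$), with $\alpha_1$ short, $\alpha_2$ long and $\theta=3\alpha_1+2\alpha_2$. Let $t\in\that$ be the involution with $\alpha_1(t)=1$ and $\alpha_2(t)=-1$. Then $\hat L$ has roots $\pm\alpha_1,\pm\theta$. Since $\theta^\vee=\alpha_1^\vee+2\alpha_2^\vee$, its coroots span an index-$2$ sublattice of $X_*(\that)=\mathbb{Z}\alpha_1^\vee\oplus\mathbb{Z}\alpha_2^\vee$, so $\hat L\simeq(\mathrm{SL}_2\times\mathrm{SL}_2)/\mu_2$. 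Similarly, $E_8$ contains $(E_7\times\mathrm{SL}_2)/\mu_2$ as the centralizer of an involution.

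Corollary~\ref{cor:completion of comm is flat} is proved under the standing hypothesis of its section that the derived group is simply connected. That is what gives the dual group connected center, as Assumption~\ref{assump: large ell} requires. So it cannot be applied to $\hat L$ as you do. For the same reason, the freeness of $\cO(\that)$ over $\cO(\that)^{W_{\hat L}}$ that you want for Step 2 need not hold: in the example above, $\hat L\git\hat L$ is a quadric cone and so is singular.

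The conclusion you need is still true. What is missing is the central-isogeny descent the paper carries out at the end of the proof of Theorem~\ref{thm:CM}:

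\begin{enumerate}
\item Since $\ell\nmid|W_{\hat L}|$, also $\ell\nmid|\pi_1(\hat L_{\mathrm{der}})|$, so $\widetilde{\hat L}=\hat L_{\mathrm{sc}}\times Z(\hat L)^\circ\to\hat L$ is an \'etale central isogeny.
\item $\Comm_{\widetilde{\hat L}}$ is open and closed in the finite \'etale cover $\widetilde{\hat L}^2\times_{\hat L^2}\Comm_{\hat L}$ of $\Comm_{\hat L}$.
\item By Lemma~\ref{lemma: Comm^u isog}, every point $p$ of $\Comm_{\hat L}$ over the identity class lifts to a point $\tilde p$ of $\Comm_{\widetilde{\hat L}}$ over the identity class.
\item $\hat C_{\widetilde{\hat L}}\to\hat C_{\hat L}$ is \'etale at the identity class.
\item Apply Corollary~\ref{cor:completion of comm is flat} to $\widetilde{\hat L}$. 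Since $\cO_{\Comm_{\hat L},p}\to\cO_{\Comm_{\widetilde{\hat L}},\tilde p}$ is faithfully flat, flatness of $\pi_{\hat L}$ at $p$ follows.
\end{enumerate}

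For Step 2, replace the freeness argument by a general fact. If a finite group $W$ acts on an affine scheme $X$ and $x$ is a closed point with stabilizer $W_x$, then $X\git W_x\to X\git W$ is \'etale at the image of $x$, because invariants commute with the flat base change to completions. Combined with $W_{\hat L}=\mathrm{Stab}_W(t)$ and $\hat L\git\hat L\simeq\that\git W_{\hat L}$, this gives the \'etaleness you need. The same fact gives step (4) above. A small further point: an $E_7$ factor of $\hat L$ can come from an $E_8$ factor of $\ghat$, but then $\ell>30$ already excludes $\ell=19$.

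With these repairs your argument is a valid alternative to the paper's proof. The paper instead uses miracle flatness: $\hat C$ is smooth, every fiber of $\pi$ has dimension $\dim\ghat$, and $\Comm_{\ghat}$ is Cohen--Macaulay by Theorem~\ref{thm:CM}. Your route needs only flatness along the unipotent fiber, not the Cohen--Macaulay input or the dimension count. The price is a local analysis of Chevalley quotients at $t$ and passing the hypotheses to pseudo-Levi subgroups and their covers.
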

\begin{proof}
    It is easy to see that the fibers all have dimension $\dim(\ghat)$ by decomposing each fiber of the map $\ghat \to \hat{C}$ into a finite union of orbits and using the orbit-stabilizer dimension formula. By \cite[Theorem 6.1]{Steinberg1965}, the map $\hat{C} \to \Spec\overline{\FF}_\ell$ is smooth if $\hat{G}$ has simply connected derived subgroup. The result then follows from Theorem \ref{thm:CM} and ``miracle flatness'' \cite[Tag 00R4]{stacks-project}.
\end{proof}

We now combine Cohen--Macaulayness with regularity in codimension one to establish normality of the commuting scheme.

\begin{cor}\label{cor: normal}
    The commuting scheme $\Comm_{\ghat}$ is normal. In particular, $\Comm_{\ghat}$ is reduced.
\end{cor}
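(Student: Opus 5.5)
By Serre's criterion, a Noetherian scheme is normal if and only if it satisfies $(R_1)$ and $(S_2)$. Theorem~\ref{thm:CM} already gives Cohen--Macaulayness, hence $(S_k)$ for all $k$. So the plan is to prove $(R_1)$: the singular locus of $\Comm_{\ghat}$ has codimension at least $2$. Reducedness then follows from normality, or directly from $(R_0)+(S_1)$.

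\textbf{Reduction to the simply connected case.} First I reduce to $\ghat$ with simply connected derived subgroup. The proof of Theorem~\ref{thm:CM} shows that for an \'etale central isogeny $\widetilde{\hat G}\to\ghat$, the map $\Comm_{\widetilde{\hat G}}\to\Comm_{\ghat}$ is \'etale near the unipotent fiber. The same \'etale-local product decomposition of completed local rings,
\[
\Comm^\wedge_{\ghat,(g,h)}\simeq \mathbb{A}^{d,\wedge}_0\times\Comm^\wedge_{\hat M^\circ,(u,v)},
\]
reduces regularity questions at arbitrary points to unipotent pairs in centralizers. So it suffices to control the singular locus in the simply connected-derived case.

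\textbf{Bounding the singular locus.} In that case Corollary~\ref{cor: flatness of pi} says $\pi\colon\Comm_{\ghat}\to\hat C$ is flat with fibers of dimension $\dim\ghat$, so $\Comm_{\ghat}$ is equidimensional of dimension $\dim\ghat+\operatorname{rk}\ghat$. Let $\Comm^{\mathrm{reg}}$ be the open locus of pairs $(x,y)$ for which $y$ is regular. The map $\ghat\times\ghat\to\ghat$, $(x,y)\mapsto xyx^{-1}y^{-1}$, has differential at $(x,y)$ whose image contains the image of $\mathrm{Ad}(y)-1$. For $y$ regular this image has codimension exactly $\operatorname{rk}\ghat$. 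Hence the tangent space of $\Comm_{\ghat}$ at such a point has dimension $\dim\ghat+\operatorname{rk}\ghat$, equal to the dimension of $\Comm_{\ghat}$, so $\Comm^{\mathrm{reg}}$ is smooth. The same holds with the roles of $x$ and $y$ swapped. Thus the singular locus is contained in the closed set $Z$ of pairs with both $x$ and $y$ non-regular, and it suffices to show $\operatorname{codim} Z\geq 2$.

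\textbf{Codimension estimate.} This is the main obstacle. I would stratify by the semisimple part $t$ of $y$. Using the local product decomposition, pairs with $y$ having semisimple part conjugate into a centralizer of type $\hat L=Z_{\ghat}(t)^\circ$ contribute a stratum of dimension at most
\[
(\dim\ghat-\dim\hat L)+\dim(\text{pairs in }\Comm_{\hat L}\text{ near unipotent }(x,y))+\dim\{t\text{-stratum in }\hat C\}.
\]
This lets me induct on semisimple rank, reducing to the case where both elements are unipotent up to central translation. There one needs that the variety of commuting pairs of non-regular unipotent elements has dimension at most $\dim\ghat+\operatorname{rk}\ghat-2$. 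The dimension of $\Commu_{\ghat}$ is $\dim\ghat$, and I would bound the pairs with $y$ unipotent non-regular and $x$ non-regular by summing over unipotent classes $O$:
\[
\dim O+\dim\bigl(\text{non-regular part of }Z_{\ghat}(y)\bigr)\leq \dim\ghat-1,
\]
using that $Z_{\ghat}(y)$ is never entirely contained in the non-regular locus when $y$ is non-regular. Adding the central and semisimple-parameter directions, the contributions must be balanced stratum by stratum. A rank-one computation should give the key input: for $\mathrm{SL}_2$ or $\mathrm{PGL}_2$, pairs of scalar-times-unipotent elements form a subset of codimension $2$ in the $4$-dimensional commuting scheme. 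The general case would then follow via Levi-type reductions, in the spirit of Richardson's argument, which I expect to be the hardest and most technical point.
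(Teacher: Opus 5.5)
\textbf{There is a genuine gap at the key step.} Your frame matches the paper: Serre's criterion, $(S_2)$ from Theorem~\ref{thm:CM}, smoothness where one coordinate is regular, and reduction to the simply connected derived case via completions. But the heart of $(R_1)$, namely that the locus $Z$ where neither coordinate is regular has codimension at least $2$, is not carried out. The concrete inputs you propose for it are also wrong.

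\emph{The class-by-class inequality fails.} You want $\dim O+\dim(\text{non-regular part of }Z_{\ghat}(y))\leq\dim\ghat-1$. Take the subregular class $O=G_2(a_1)$ in $G_2$, with $\ell\geq 7$:
\begin{itemize}
\item $\dim O=10$, and $Z_{\ghat}(y)^\circ$ is unipotent of dimension $4$ with component group $S_3$.
\item $Z_{\ghat}(y)^\circ$ contains no regular element of $G_2$. The unipotent elements commuting with a regular unipotent are regular, trivial, or in the minimal class $A_1$, so they never lie in $G_2(a_1)$.
\item Hence the non-regular part of $Z_{\ghat}(y)$ has dimension $4$, and the sum is $14=\dim G_2$.
\end{itemize}
That some other component of the disconnected group $Z_{\ghat}(y)$ meets the regular locus does not help. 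Your rank-one ``key input'' is also false as stated. Commuting pairs of $\pm$unipotent elements in $\mathrm{SL}_2$ form a $3$-dimensional subset of the $4$-dimensional $\Comm_{\mathrm{SL}_2}$, i.e.\ codimension $1$. The induction on semisimple rank via Levi-type reductions is left as an expectation, so $(R_1)$ is not established.

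\textbf{The missing idea is to use flatness of $\pi$ directly.} You already have Corollary~\ref{cor: flatness of pi}, with all fibres of dimension $\dim\ghat$, and this makes any stratified dimension count unnecessary. Let $\eta$ be a height-one point of $\Comm_{\ghat}$ and $\eta'=\pi(\eta)$. Flatness forces $\eta'$ to have height $\leq 1$, and in the height-one case forces $\eta$ to be a generic point of the fibre over $\eta'$.
\begin{itemize}
\item If $\eta'$ is the generic point of $\hat C$, or a height-one point off the discriminant, then $y$ is regular semisimple.
\item If $\eta'$ is a generic point of the discriminant divisor, its semisimple representative $s$ has $Z_{\ghat}(s)$ connected of semisimple rank one. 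On the fibre, either $y\sim su$ with $u\neq 1$, which is regular, or $y\sim s$. In the latter case $x$ ranges over the irreducible group $Z_{\ghat}(s)$, whose generic element is regular in $\ghat$.
\end{itemize}
So every height-one point lies in the smooth locus. In your dimension-count language: over the codimension $\geq 2$ part of $\hat C$ the estimate is automatic from the fibre dimension, and only fibres over generic points of the discriminant need inspection.

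\textbf{The reduction step should be rephrased.} Reduce \emph{normality} at each point via completions, since normality of an excellent local ring can be checked on its completion, as the paper does. ``Regularity questions'' or the codimension of the singular locus are not pointwise statements and do not transfer directly.
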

\begin{proof}
    This is a consequence of Serre's criterion (\cite[Tag 031S]{stacks-project}), provided we can show that $\Comm_{\ghat}$ is regular in codimension $1$. First we prove this under the assumption that $\ghat$ has simply connected derived group, then we will reduce to this case. First we show that the locus of $(g, h) \in \Comm_{\ghat}$ such that either $g$ or $h$ is regular is smooth. By symmetry we prove this for the locus for which $g$ is regular. By \cite[Theorem 3.11]{Springer}, the locus of pairs $(g, h) \in \Comm_{\ghat}$ such that $g$ is regular is smooth after taking the quotient by the action of $Z(\ghat)$ on the second factor. Since $Z(\hat{G})$ is smooth, we find that this locus is smooth. 

    By Corollary \ref{cor: flatness of pi}, if $\eta \in \Comm_{\ghat}$ is a point of height one, then writing $\pi(\eta) = \eta'$ we have, by flatness, that $\eta'$ is either of height one or zero. In the latter case we are done as then $\eta'$ is regular, so we focus on the height one case. In this case $\eta$ is a generic point of the fiber $\Comm_{\ghat, \eta'}$, and its image $\eta'$ is a generic point of the discriminant locus. Let $s \in \ghat(\overline{k(\eta')})$ be a semisimple representative of the point $\eta'$, then $Z_{\ghat}(s)$ has semisimple rank $1$, and is connected by \cite[Corollary 9.4]{Steinberg1968}. Let $(g, h) \in \Comm_{\ghat, \eta'}$, then $h$ is conjugate to $su$ with $s$ the generic semisimple element as above and $u \in Z_{\ghat}(s)$, if $u \neq 1$, then $u$ is regular since $Z_{\ghat}(s)$ has semisimple rank one, thus $h$ is regular. Otherwise $h = s$, $g$ can be any element of a maximal torus of $\ghat$, so $Z_{\ghat}(s) \cap \ghat^{\mathrm{reg}}$ is dense and open (as it is nonempty). In particular the generic point of $\Comm_{\ghat, \eta'}$ is in the regular locus. 

    Now we move to the general case. By \cite[Tag 0FIZ]{stacks-project} it suffices to check normality of $\Comm_{\ghat}$ after taking completion at an arbitrary point $(g, h) \in \Comm_{\ghat}$. The same formally smooth reduction as applied in the proof of Theorem \ref{thm:CM} shows that we may reduce to studying the completion at a unipotent tuple $(u, v) \in \Comm_H$ for $H$ some connected reductive subgroup of $\ghat$. Now as usual we may choose $\alpha: \widetilde{H} \to H$ an \'etale central isogeny such that $\widetilde{H}_{\mathrm{der}}$ is simply connected, and the map 
    \[
        \Comm_{\widetilde{H}, (u, v)}^{\wedge} \to \Comm_{H, (u, v)}^{\wedge}
    \]
    is an isomorphism as argued in the last paragraph of the proof of Theorem \ref{thm:CM}. The scheme $\Comm_{\widetilde{H}, (u, v)}^{\wedge}$ is normal by \cite[Tag 0C23]{stacks-project}, and the same is true for $\Comm_{H, (u, v)}^{\wedge}$, completing the proof.
\end{proof}
\newcommand{\OF}{\mathcal{O}_F}
\newcommand{\OFS}{\OF[S^{-1}]}

\begin{cor}\label{cor: char 0}
    Let $\hat{G}/F$ be a connected reductive algebraic group over a field $F$ of characteristic $0$ or of characteristic $\ell$ satisfying the standing hypotheses. Then the commuting scheme $\Comm_{\hat{G}}$ is normal and Cohen--Macaulay.
\end{cor}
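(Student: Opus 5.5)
The plan is to reduce to the case already treated in Section \ref{section: CM}, which gives the result for $\hat{G}$ split over $\overline{\FF}_\ell$ with $\ell$ satisfying the standing hypotheses (Theorem \ref{thm:CM} and Corollary \ref{cor: normal}). The reduction has two parts: a base-change descent for arbitrary fields of characteristic $\ell$, and a spreading-out argument for characteristic $0$.

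\emph{Characteristic $\ell$ over an arbitrary field.} Let $\overline{F}$ be an algebraic closure of $F$. Normality and the Cohen--Macaulay property descend along the faithfully flat map $\Comm_{\hat{G}_{\overline{F}}}\to\Comm_{\hat{G}}$ (Stacks, Tags 033G and 0352). Moreover, $\Comm_{\hat{G}_{\overline F}}=\Comm_{\hat{G}}\times_F\overline F$, since the commuting scheme is defined by equations. It therefore suffices to treat $F$ algebraically closed. Over an algebraically closed field, $\hat{G}$ is split and descends to $\overline{\FF}_\ell$, so $\Comm_{\hat{G}}$ is the base change of $\Comm_{\hat{G}_0}$ along $\overline{\FF}_\ell\to F$. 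The base change of a Cohen--Macaulay scheme of finite type over a field along a field extension is again Cohen--Macaulay (Tag 045U). Normality is preserved because $\overline{\FF}_\ell$ is perfect, so normal implies geometrically normal (Tag 038O). Theorem \ref{thm:CM} and Corollary \ref{cor: normal} then give the claim.

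\emph{Characteristic $0$.} As above, we may assume $F$ is algebraically closed, and then $F=\overline{\QQ}$ after base change along $\overline{\QQ}\to F$. Let $\mathcal{G}$ be the split Chevalley model of $\hat{G}$ over $\ZZ$, and let $\Comm_{\mathcal{G}}\subseteq\mathcal{G}^2$ be the closed subscheme defined by the commutator equations. It is of finite type over $\ZZ$, with generic fiber $\Comm_{\hat{G}_\QQ}$.

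The key step is the following. By generic flatness we may choose $S$ such that $\Comm_{\mathcal{G}}$ is flat over $\ZZ[S^{-1}]$, while also enlarging $S$ so that every $\ell\notin S$ satisfies the standing hypotheses. By the first part, every closed fiber over $\ell\notin S$ is then Cohen--Macaulay and geometrically normal. For a flat finite type morphism, the locus of points whose fiber is Cohen--Macaulay (resp.\ geometrically normal) is open (EGA IV 12.1.6, 12.2.4). This locus contains all points over closed points of $\Spec\ZZ[S^{-1}]$. Its complement is closed and contains no point lying over a closed point. Any nonempty closed subset of a scheme of finite type over $\ZZ$ contains a closed point, and such a point lies over a closed point of $\Spec \ZZ$. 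Hence the complement is empty, so the generic fiber is also Cohen--Macaulay and geometrically normal. Alternatively, one can argue directly with the total space: by Tag 045J it is Cohen--Macaulay near closed fibers, every point specializes to such, and Cohen--Macaulayness localizes. Normality of the generic fiber, as a localization of the normal total space, is checked the same way via Serre's criterion, using flatness and the fact that $\ZZ[S^{-1}]$ is regular.

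The main obstacle is justifying that the generic fiber inherits the properties, since this requires flatness of $\Comm_{\mathcal{G}}$ over an open part of $\Spec\ZZ$. I expect generic flatness, combined with the openness of the Cohen--Macaulay and geometrically normal fiber loci and the observation that any nonempty closed subset contains a closed point over a closed prime, to handle this. Everything else is standard descent.
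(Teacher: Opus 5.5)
Your proof is correct. In characteristic $\ell$, and for the Cohen--Macaulay half in characteristic $0$, it is essentially the paper's argument. The paper spreads $\hat{G}$ out over $\mathcal{O}_F[S^{-1}]$ for a number field $F$ and shrinks the base to get flatness. It then uses Tag 045J to get Cohen--Macaulayness of the total space along the closed fibers, and specializes generic-fiber points to closed fibers. That is exactly your ``alternative''. Your main line (openness of the locus with Cohen--Macaulay fibers, EGA IV 12.1.6, plus the closed-point argument) is an equivalent repackaging.

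The genuine difference is normality in characteristic $0$. The paper does not transport normality from characteristic $\ell$. It re-runs the proof of Corollary~\ref{cor: normal} in characteristic $0$, i.e.\ Serre's criterion, with $S_2$ coming from the Cohen--Macaulayness just established and $R_1$ verified geometrically. That verification uses flatness of $\pi$ via miracle flatness, Springer's smoothness of the locus where one entry is regular, and the analysis of generic points over the discriminant. You instead obtain geometric normality of the generic fiber formally from that of the closed fibers, via openness of the geometrically-normal-fiber locus of a flat finitely presented morphism.

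Your route treats both properties uniformly and avoids rechecking the geometric inputs in characteristic $0$, at the price of a heavier EGA citation. It is also slightly cleaner: you pass to $\overline{F}$ first and only spread out the split Chevalley model over $\ZZ$, rather than a possibly non-split form over $\mathcal{O}_F[S^{-1}]$.

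One caveat concerns your parenthetical variant for normality (``via Serre's criterion, using flatness and regularity of $\ZZ[S^{-1}]$''): as stated it is circular. Take a point $x$ over $\ell$. The height-one primes of $\mathcal{O}_{\Comm_{\mathcal{G}},x}$ not containing $\ell$ are points of the generic fiber, and their regularity is exactly what you do not yet know. To make that variant work you need an extra input, e.g.\ that normality lifts from $B/\ell B$ to a Noetherian local ring $B$ in which $\ell$ is a nonzerodivisor. Your main argument does not depend on this.
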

\begin{proof}
    When $F$ is of characteristic $\ell$, it follows from \cite[Tag 045P and Tag 038O]{stacks-project} and Theorem \ref{thm:CM} and Corollary \ref{cor: normal}. Now suppose that $F$ has characteristic $0$. We first assume that $F$ is a number field.
    Let $\OF$ denote the ring of integers in $F$. One can spread out $\hat{G}$ to a connected reductive group scheme $\hat{\mathcal{G}}/\OF[S^{-1}]$, for $S$ a finite collection of places of $F$ such that every place not in $S$ satisfies the assumption on characteristic. By openness of the flat locus, after enlarging $S$ one may assume that $\Comm_{\hat{\mathcal{G}}}$ is flat over $\OFS$. Let $v \in \Spec \OFS$ be a closed point with residue field $k(v)$. By the above discussion we know that $\Comm_{\hat{\mathcal{G}},k(v)}$ is Cohen--Macaulay. By \cite[Tag 045J]{stacks-project}, this implies that $\Comm_{\hat{\mathcal{G}}}$ is Cohen--Macaulay at all of its local rings in the fiber above $v$. However as $v$ was arbitrary, this implies the result: indeed take $(g, h) \in \Comm_{\hat{G}, F}$, since $\hat{\mathcal{G}} \times \hat{\mathcal{G}}$ is a finitely presented affine scheme over $\OFS$, there exists some place $v \in \Spec \OFS$ such that the coordinates of $(g, h)$ are regular at $v$. Thus $\Comm_{\hat{G}}$ is Cohen--Macaulay at $(g, h)$. For normality, note that once we know $\Comm_{\hat{G}}$ is Cohen--Macaulay, the same proof as in Corollary \ref{cor: normal} works without modification. For a general field $F$ of characteristic $0$, it follows from  \cite[Tag 045P and Tag 038O]{stacks-project} again.
\end{proof}

\subsection{The Lie algebra setting}

\newcommand{\lieghat}{\hat{\mathfrak{g}}}
\newcommand{\liethat}{\hat{\mathfrak{t}}}

Now let $\lieghat$ denote the Lie algebra of $\ghat$ over $\flbar$. Let $\liethat$ denote the Lie algebra of the maximal torus $\that$ of $\ghat$. Let
\[
    \Comm_{\lieghat}=\{X,Y\in \lieghat\,|\, [X,Y]=0\}
\]
be the commuting scheme for $\lieghat$. There is a natural morphism
\[
    \pi\colon \Comm_{\lieghat}\to \liethat\git W
\]
similar to \eqref{eq: Comm -> C}.

We recall the following definition.
\begin{defn}
    A map $L: \ghat \to \lieghat$ is called a quasi-logarithm if 
    \begin{enumerate}
        \item We have $L(1) = 0$,
        \item The morphism $L$ is $\operatorname{Ad}(\ghat)$-equivariant,
        \item The induced map on tangent spaces $dL_1$ is the identity. 
    \end{enumerate}
\end{defn}

By \cite[Appendix C]{BKV} such a quasi-logarithm exists under our hypotheses if $\ell$ is assumed very good for $\ghat$ and $\ghat$ is split with simply connected derived subgroup. As in this section we are in the business of studying Lie algebras, this last restriction on $\pi_1(\ghat_{\mathrm{der}})$ is not restrictive. 

For the remainder of this subsection we assume $\ell\nmid |W|$, and that $\ghat$ has simply connected derived subgroup. In particular, $\ghat$ admits a quasi-logarithm.

\begin{thm}\label{thm: Lie CM}
    The morphism $\pi: \Comm_{\lieghat} \to \liethat\git W$ is flat, and the scheme $\Comm_{\lieghat}$ is Cohen--Macaulay. 
\end{thm}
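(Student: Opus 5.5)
Our plan is to transport the group-theoretic results to the Lie algebra via the quasi-logarithm $L\colon \ghat\to\lieghat$, which is available since $\ell\nmid|W|$ and $\ghat$ is split with simply connected derived subgroup. We argue first near the nilpotent fiber and then spread out by the same slice reduction as in Theorem \ref{thm:CM}. The first step is to show that $L$ restricts to an $\Ad(\ghat)$-equivariant isomorphism between formal neighbourhoods: $\ghat^\wedge_{\cU}$ (the completion of $\ghat$ along the unipotent cone $\cU_{\ghat}$) and $\lieghat^\wedge_{\mathcal{N}}$ (the completion along the nilpotent cone). Since $dL_1=\mathrm{Id}$ and $L$ is equivariant, $L$ is étale at $1$. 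Every unipotent conjugacy class contains $1$ in its closure, so the open locus where $L$ is étale is conjugation stable and contains $\cU_{\ghat}$. Moreover $L$ induces a map $\hat{C}\to\liethat\git W$ sending $u$ to $0$; this map is étale at $u$, because both quotients are smooth there (\cite[Theorem 6.1]{Steinberg1965}) and the differential is the identity on $\liethat^W$-coordinates. Under the standing hypotheses $L$ maps $\cU_{\ghat}$ isomorphically onto $\mathcal{N}$. This is the Springer-isomorphism-type statement of \cite[Appendix C]{BKV}, and we would invoke it rather than reprove it. Together these facts give the isomorphism $\ghat\times_{\hat C}\hat C^\wedge_u\simeq \lieghat\times_{\liethat\git W}(\liethat\git W)^\wedge_0$.

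Next we compare the commuting conditions. For commuting $(x,y)$ with $x,y$ near $\cU$, we want $[L(x),L(y)]=0$. Equivariance gives $L(x)\in\lie Z_{\ghat}(x)$ and $L(y)=L(xyx^{-1})=\Ad(x)L(y)$, so $L(y)$ is fixed by $\Ad(x)$. We then need $\Ad(x)$-invariance to imply $[L(x),\cdot]=0$ near unipotents, and the converse. The cleaner route avoids points altogether: we use the formal isomorphism $\ghat^\wedge\simeq\lieghat^\wedge$ together with the fact that $L$ intertwines centralizer schemes, $L(Z_{\ghat}(x))$ corresponding to $Z_{\lieghat}(L(x))$, on the completed locus. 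This should follow from equivariance plus étaleness, by comparing the fixed-point schemes of $\Ad(x)$ and $\operatorname{ad}(L(x))$ via their Jordan decompositions, where unipotent $x$ corresponds to nilpotent $L(x)$ and $\ell\nmid|W|$ ensures $\ell$ is very good. This yields $\Comm_{\ghat}\times_{\hat C}\hat C^\wedge_u\simeq\Comm_{\lieghat}\times_{\liethat\git W}(\liethat\git W)^\wedge_0$, compatibly with $\pi$. We expect this step to be the main obstacle: one must check scheme-theoretic, not merely pointwise, equality of the commuting loci, and we would try first to handle it on Artinian points.

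Given this, Corollary \ref{cor:completion of comm is flat} and Theorem \ref{thm:CM} imply that $\pi\colon\Comm_{\lieghat}\to\liethat\git W$ is flat along $\pi^{-1}(0)$, and that $\Comm_{\lieghat}$ is Cohen--Macaulay there. To extend to an arbitrary point $(X,Y)$, we use Jordan decomposition $X=X_s+X_n$, $Y=Y_s+Y_n$ and repeat the formal slice argument of Theorem \ref{thm:CM} with centralizers $Z_{\ghat}(Y_s)$ and then $Z(X_s)$. These centralizers are connected reductive, by the Lie analogue of \cite[Corollary 9.4]{Steinberg1968} for very good $\ell$. Translating by the central semisimple parts reduces to the nilpotent case for a Levi-type subgroup, which satisfies the same hypotheses (the hypothesis $\ell\nmid|W|$ passes to it). Flatness of $\pi$ everywhere then follows as in Corollary \ref{cor: flatness of pi}: the fibers are equidimensional of dimension $\dim\ghat$, the base $\liethat\git W$ is smooth, and the total space is Cohen--Macaulay, so miracle flatness \cite[Tag 00R4]{stacks-project} applies.
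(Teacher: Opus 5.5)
\textbf{Gap in the comparison step.} The isomorphism $\Comm_{\ghat}\times_{\hat C}\hat C^\wedge_u\simeq\Comm_{\lieghat}\times_{\liethat\git W}(\liethat\git W)^\wedge_0$ that you deduce is false. Base change to $\hat C^\wedge_u$ only forces $y$ to be (infinitesimally) unipotent. The other coordinate $x$ ranges over all of $Z_{\ghat}(y)$, and $L$ is not an isomorphism there. Already for $\ghat=\GG_m$ the two special fibers are $\GG_m$ and $\mathbb{A}^1$. Your equivariance discussion silently assumes that both $x$ and $y$ are near $\cU$. So the deduction ``$\pi$ is flat along $\pi^{-1}(0)$ and $\Comm_{\lieghat}$ is Cohen--Macaulay there'' is not justified as written.

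\textbf{How to repair it.} Compare completions only at points $(x_0,y_0)$ with both coordinates unipotent.
\begin{itemize}
\item Your open-locus argument shows that $L$ is \'etale at $x_0$ and at $y_0$. Hence $(L,L)$ is an isomorphism $\ghat^\wedge_{x_0}\times\ghat^\wedge_{y_0}\simeq\lieghat^\wedge_{L(x_0)}\times\lieghat^\wedge_{L(y_0)}$, and no global Springer isomorphism $\cU\simeq\mathcal{N}$ is needed.
\item For the commuting equations, Jordan decompositions are not available on Artinian points. The scheme-theoretic input comes from differentiating $L(gxg^{-1})=\Ad(g)L(x)$ at $g=1$, which gives $\operatorname{ad}(L(x))=dL_x\circ R_x\circ(\Ad(x)-1)$, with $R_x$ right translation.
\item Where $dL_x$ is invertible, the equations $[L(x),Y]=0$ and $\Ad(x)Y=Y$ therefore cut out the same subscheme.
\item Combine this with $L(xyx^{-1})=\Ad(x)L(y)$ and with injectivity of $L$ on $\ghat^\wedge_{y_0}$, which contains both $y$ and $xyx^{-1}$. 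This gives $xy=yx\iff[L(x),L(y)]=0$ on Artinian points.
\end{itemize}

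\textbf{Comparison with the paper.} With this repair, your Jordan-decomposition globalization can probably be completed, but it rests on inputs you have not checked:
\begin{itemize}
\item a Lie-algebra version of the formal slice isomorphism;
\item that centralizers of semisimple elements of $\lieghat$ are Levi subgroups with simply connected derived subgroup still satisfying the hypotheses on $\ell$. This is needed so that Corollary \ref{cor:completion of comm is flat}, stated under that hypothesis, and a quasi-logarithm (for instance the restriction of $L$) apply to them.
\end{itemize}
The paper needs none of this. It compares the two commuting schemes only at the single point $(1,1)\leftrightarrow(0,0)$, where $L$ is trivially a formal isomorphism. It then uses the scaling $\GG_m$-action: $\cO(\Comm_{\lieghat})$ and $\cO(\liethat\git W)$ are positively graded compatibly with $\pi$, the latter with the degrees of the basic invariants as weights. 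So the non-flat and non-Cohen--Macaulay loci are closed and conical, and any nonempty closed conical subset contains the cone point $(0,0)$ (Lemma \ref{lem: dilation}). This contraction argument is the idea missing from your proposal. It makes both the fiberwise comparison and the slice reduction in the Lie algebra unnecessary.
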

\begin{proof}
    Let $L: \ghat \to \lieghat$ be a quasi-logarithm. There is a map
    \begin{equation}\label{eq: completion lie vs group}
        \Comm^{\wedge}_{\ghat, (1, 1)} \to \Comm^{\wedge}_{\lieghat, (0, 0)}
    \end{equation}
    given by $(L, L)$. We claim that this map is an isomorphism. By $\mathrm{Ad}(\ghat)$-equivariance of the quasi-logarithm we know that this map lives over an identification $(\that\git W)^{\wedge}_u \simeq (\liethat\git W)^{\wedge}_0$ induced by $L$. Note that since the map $\hat{L}: \ghat^{\wedge}_1 \to \lieghat^{\wedge}_0$ induced by $L$ is an isomorphism since it is the identity on tangent spaces, we simply have to check that the commuting equations are well-behaved, and this follows from $\mathrm{Ad}(\ghat)$-equivariance once again. 

    By Corollary \ref{cor:completion of comm is flat} and Theorem \ref{thm:CM}, we find that $\Comm^{\wedge}_{\lieghat, (0, 0)} \to (\liethat \git W)^{\wedge}_0$ is flat and the source is Cohen--Macaulay. Now we make use of the natural scaling action of $\mathbb{G}_m$ on $\Comm_{\lieghat}$. Indeed for a pair $A, B \in \lieghat$ such that $[A, B]_{\lieghat} = 0$ we can scale both matrices by $t$. If $\chi: \lieghat \to \liethat\git W$ is the map to the adjoint quotient, then $\chi$ obeys the equivariance $\chi(tB) = (t^{d_1} f_1(B), t^{d_2}f_2(B), \dots, t^{d_m} f_m(B)),t\in\GG_m$ where the $d_i$ are the degrees of homogeneous generators of $\cO(\lieghat)^{\ghat}$, and all these weights are positive. We thus reduce to the situation of the following lemma, which completes the proof.
\end{proof}

\begin{lemma}\label{lem: dilation}
    Let $R \to A$ be a map of positively graded finitely presented $k$-algebras, where $R = k[t_1, \dots, t_n]$, with $R_0 = k,  A_0 = k'$ with $k'$ an extension of $k$. Let $I$ be $(t_1, \dots, t_n)$, $\mathfrak{m} = A_+$, and suppose that $\hat{A}_{\mathfrak{m}}/I\hat{A}_{\mathfrak{m}}$ is Cohen--Macaulay and $\hat{A}_{\mathfrak{m}}$ is flat over $\hat{R}$, then $A$ is flat and Cohen--Macaulay over $R$. 
\end{lemma}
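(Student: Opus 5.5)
Here $R$ and $A$ are positively graded, $R\to A$ is a graded homomorphism, and $\mathfrak{m}=A_+$ is the homogeneous ideal containing $IA$. The plan is to pass from the completion $\hat A_{\mathfrak m}$ back to the graded ring $A$ using the standard fact that homological properties of graded modules are detected at the irrelevant ideal. The argument has two parts: first flatness, then Cohen--Macaulayness.

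\emph{Flatness.} By the local flatness criterion, $\hat A_{\mathfrak m}$ flat over $\hat R$ implies $\mathrm{Tor}_1^{\hat R}(\hat R/I,\hat A_{\mathfrak m})=0$. Since $\hat R$ is flat over $R_I$ and $\hat A_{\mathfrak m}$ is faithfully flat over $A_{\mathfrak m}$, this gives $\mathrm{Tor}_1^{R}(R/I,A)_{\mathfrak m}=0$. The module $\mathrm{Tor}_1^R(R/I,A)$ is a graded $A$-module, computed from the graded Koszul complex on $t_1,\dots,t_n$ tensored with $A$, and it is finitely generated because $A$ is finitely presented, hence Noetherian. A finitely generated graded $A$-module $M$ with $A_0=k'$ a field and $M_{\mathfrak m}=0$ vanishes: if $M\neq 0$, the annihilator of $M$ is a homogeneous proper ideal, so it lies in $\mathfrak m$ and the support of $M$ contains $\mathfrak m$. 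Hence $\mathrm{Tor}_1^R(R/I,A)=0$. The graded local flatness criterion (graded Nakayama for positively graded modules over the positively graded ring $R$) then shows that $A$ is flat over $R$. Concretely, one takes a graded free resolution of $A$ over $R$ that is minimal modulo $I$; vanishing of $\mathrm{Tor}_1$ forces the first syzygy module to be zero by graded Nakayama.

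\emph{Cohen--Macaulayness.} The ring $\hat A_{\mathfrak m}/I\hat A_{\mathfrak m}$ is the completion of $(A/IA)_{\mathfrak m}$, so $(A/IA)_{\mathfrak m}$ is Cohen--Macaulay. Since $R_I$ is regular and $\hat A_{\mathfrak m}$ is flat over $\hat R$, the ring $\hat A_{\mathfrak m}$ is Cohen--Macaulay by Tag 045J, and therefore so is $A_{\mathfrak m}$. It remains to spread this from $\mathfrak m$ to all of $\Spec A$, and this is the step requiring care. I would use the $\GG_m$-action attached to the grading. The non-Cohen--Macaulay locus of $A$ is closed, since $A$ is of finite type over a field and hence excellent, and it is $\GG_m$-stable, i.e.\ defined by a homogeneous ideal $J$. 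If this locus were nonempty, $J$ would be a proper homogeneous ideal, hence contained in $\mathfrak m$. Then $\mathfrak m$ would lie in the locus, because the closure of every $\GG_m$-orbit in $\Spec A$ contains the cone point. This contradicts the Cohen--Macaulayness of $A_{\mathfrak m}$. Equivalently, one can use the graded local cohomology characterization: a positively graded Noetherian ring with $A_0$ a field is Cohen--Macaulay if and only if its localization at $A_+$ is.

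The same cone-point argument reproves flatness without Tor, since the non-flat locus of $\Spec A\to\Spec R$ is closed and $\GG_m$-stable by equivariance, so it contains $\mathfrak m$ whenever it is nonempty. The main technical care is in identifying these loci as closed and $\GG_m$-stable; the rest is formal.
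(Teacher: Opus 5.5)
Your proof is correct. The Cohen--Macaulay part is exactly the paper's argument: Tag 045J for the flat local map $\hat R\to\hat A_{\mathfrak m}$, descent to $A_{\mathfrak m}$, and then the cone-point argument. That argument says the non-Cohen--Macaulay locus is closed and $\GG_m$-stable, so it is cut out by a homogeneous ideal, which lies in $\mathfrak m=A_+$ if it is proper.

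For flatness the paper only says the argument is ``essentially the same''. That is, it runs the cone-point argument on the non-flat locus of $\Spec A\to\Spec R$, which requires openness of the flat locus for morphisms of finite presentation together with $\GG_m$-stability of that locus.

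Your main flatness route is different:
\begin{enumerate}
\item You transport $\mathrm{Tor}_1$-vanishing from $\hat A_{\mathfrak m}$ to $\mathrm{Tor}^R_1(R/I,A)_{\mathfrak m}$, using flat base change along $R\to\hat R$ and faithful flatness of $A_{\mathfrak m}\to\hat A_{\mathfrak m}$.
\item You kill the finitely generated graded module $\mathrm{Tor}_1^R(R/I,A)$ by the annihilator/support form of the cone-point argument.
\item You finish with graded Nakayama.
\end{enumerate}
This avoids openness of the flat locus entirely. It also proves more: lifting a homogeneous $k$-basis of $A/IA$ shows that $A$ is a graded free $R$-module, not just flat.

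The cost is that you use that $A$ is bounded below as a graded $R$-module and that $I=R_+$ is generated by homogeneous elements of positive degree. Both hold in the setting where the lemma is applied, and the second is needed anyway for $IA\subseteq\mathfrak m$, so that $\hat A_{\mathfrak m}$ is an $\hat R$-algebra.

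A minor point: $\mathrm{Tor}_1^{\hat R}(\hat R/I,\hat A_{\mathfrak m})=0$ follows directly from flatness; you do not need the local flatness criterion for that step.
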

\begin{proof}
    Since $\hat{A}_{\mathfrak{m}}$ is flat over $\hat{R}$, the Cohen--Macaulayness of $\hat{A}_{\mathfrak{m}}/I\hat{A}_{\mathfrak{m}}$ implies that of $\hat{A}_{\mathfrak{m}}$ by \cite[Tag 045J]{stacks-project}. By \cite[Tag 07NX]{stacks-project} this is then also true for $A_{\mathfrak{m}}$. Let $Z$ be the locus on which $\Spec A$ is not Cohen--Macaulay, by \cite[Tag 045U]{stacks-project} this is a closed subscheme of $\Spec A$, which is $\mathbb{G}_m$-invariant. Let $J$ be the homogeneous ideal defining this locus $Z$. If $J$ were a \emph{proper} homogeneous ideal it would be contained within $\mathfrak{m}$, but this is impossible by the above. The argument for flatness is essentially the same. 
\end{proof}

\begin{cor}
    The commuting scheme $\Comm_{\lieghat}$ is normal.
\end{cor}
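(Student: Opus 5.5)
We use Serre's criterion \cite[Tag 031S]{stacks-project}. Theorem \ref{thm: Lie CM} already gives that $\Comm_{\lieghat}$ is Cohen--Macaulay and hence $S_2$, so only $R_1$ needs proof. We mirror the proof of Corollary \ref{cor: normal}, with the Lie algebra Jordan decomposition replacing the group one. Throughout we keep the standing assumptions $\ell\nmid|W|$ and $\ghat_{\mathrm{der}}$ simply connected. Under these, $\liethat\git W$ is an affine space, and $\pi\colon\Comm_{\lieghat}\to\liethat\git W$ is flat by Theorem \ref{thm: Lie CM}.

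\textbf{Step 1 (regular locus is smooth).} Consider the open locus of pairs $(X,Y)$ in which $X$ (or, symmetrically, $Y$) is regular. We must show this locus is smooth. When $X$ is regular, its centralizer $\mathfrak{z}(X)$ has dimension $\mathrm{rk}\,\ghat$ and varies smoothly: under our hypotheses on $\ell$ the map $X\mapsto\mathfrak{z}(X)$ on the regular locus is a smooth rank-$r$ vector bundle. The pairs with $X$ regular therefore form the total space of this bundle over $\lieghat^{\mathrm{reg}}$, which is smooth of dimension $\dim\lieghat+r$. The same fact can also be obtained by transporting Springer's group result through the quasi-logarithm $L$, which is étale near $0$, combined with the $\GG_m$-scaling. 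This bundle description is the step I would check most carefully, since it relies on $\ell$ being large.

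\textbf{Step 2 (height one points).} Let $\eta$ be a point of codimension one, and let $\eta'=\pi(\eta)$. By flatness, $\eta'$ has codimension at most one. If $\eta'$ is the generic point, $Y$ is regular semisimple at $\eta$, so $\eta$ lies in the locus of Step 1. Otherwise $\eta'$ is a generic point of the discriminant divisor and $\eta$ is a generic point of the fiber. A representative $s$ of $\eta'$ is semisimple with $\mathfrak{z}(s)$ of semisimple rank one; this uses the Lie algebra analogue of Steinberg's connectedness, which holds since $\ell$ is good. Write $Y=s+n$ with $n$ nilpotent in $\mathfrak{z}(s)$. If $n\neq0$, then $Y$ is regular, because $\mathfrak{z}(s)$ has semisimple rank one. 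If $n=0$, then $X$ ranges over $\mathfrak{z}(s)$, and the regular elements form a dense open subset of it. Either way, a generic point of the fiber lies in the smooth locus of Step 1, so $\Comm_{\lieghat}$ is regular at $\eta$.

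\textbf{Alternative route.} Instead of Steps 1 and 2, one can use $\GG_m$-equivariance. Normality of the completion $\Comm^\wedge_{\ghat,(1,1)}$ was established in the proof of Corollary \ref{cor: normal}. The isomorphism \eqref{eq: completion lie vs group} transports it to $\Comm^\wedge_{\lieghat,(0,0)}$, and excellence then gives normality of the local ring at the cone point. The non-normal locus is closed \cite[Tag 0BX4]{stacks-project}, since $\Comm_{\lieghat}$ is of finite type. It is also $\GG_m$-stable, and every nonempty closed $\GG_m$-stable subset of this positively graded cone contains $(0,0)$. Hence the non-normal locus is empty, exactly as in Lemma \ref{lem: dilation}. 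I would present this second route as the main proof, because it avoids redoing Steps 1 and 2, and fall back on the direct argument if the completion comparison causes difficulty. Reducedness then follows from normality.
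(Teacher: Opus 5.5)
Your main proof, the ``alternative route'', is essentially the paper's proof. It transports normality of $\Comm^\wedge_{\ghat,(1,1)}$ through \eqref{eq: completion lie vs group} to get normality at the cone point $(0,0)$, and then concludes because the non-normal locus is closed and $\GG_m$-stable, so if nonempty it would contain $(0,0)$. Your fallback via Steps 1--2 is also a valid Serre-criterion argument mirroring Corollary \ref{cor: normal}, but it is not needed; its only slip is harmless, since a height-one $\eta'$ off the discriminant is covered by Step 1 as well.
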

\begin{proof}
    By Corollary \ref{cor: normal}, the completion $\Comm_{\hat{G},(1,1)}^\wedge$ is normal and integral. Therefore $\Comm_{\lieghat,(0,0)}^\wedge$ is normal and integral by the isomorphism \eqref{eq: completion lie vs group}.
    Recall that $\cO(\Comm_{\lieghat})$ is a positively graded ring, and hence the homomorphism $\cO(\Comm_{\lieghat})\to \cO(\Comm_{\lieghat,(0,0)}^\wedge)$ is injective. In particular, $\Comm_{\lieghat}$ is integral and is normal at the point $(0,0)$ by \cite[Tag 0FIZ]{stacks-project}. By \cite[Tag 035S]{stacks-project}, since $\Comm_{\lieghat}$ is Nagata, the normal locus in$\Comm_{\lieghat}$ is open dense. Let $Z$ be the closed locus on which the map from the normalization is not an isomorphism. Once again $Z$ is defined by a homogeneous ideal $J$ in the coordinate ring $A = \mathcal{O}(\Comm_{\lieghat})$. Now applying the argument of Lemma \ref{lem: dilation} we find that $\Comm_{\lieghat}$ is normal. 
\end{proof}

\begin{cor}\label{cor: char 0 Lie}
    Let $\hat{G}$ be a reductive algebraic group over a field $F$ of characteristic $0$ or of characteristic $\ell$ satisfying the standing hypotheses. If $\hat{\mathfrak{g}}$ is the Lie algebra of $\hat{G}$, then the scheme $\Comm_{\hat{\mathfrak{g}}}$ is normal and Cohen--Macaulay.
\end{cor}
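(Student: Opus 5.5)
We deduce the statement from the results already proved over $\overline{\FF}_\ell$, mirroring the proof of Corollary~\ref{cor: char 0}. First we reduce to $\hat{G}$ split with simply connected derived subgroup over an algebraically closed field. For a general field $F$, Cohen--Macaulayness and normality of a finite type scheme may be checked after the faithfully flat base change to $\overline{F}$ (\cite[Tag 045P and Tag 038O]{stacks-project}), so we may assume $F$ is algebraically closed and $\hat{G}$ split. The Lie algebra $\hat{\mathfrak{g}}$ only depends on the isogeny class up to the center. Under the standing hypotheses ($\ell\nmid|W|$, hence $\ell\nmid|\pi_1(\hat{G}_{\mathrm{der}})|$), the central isogeny $\widetilde{\hat{G}}=\hat{G}_{\mathrm{sc}}\times Z(\hat{G})^\circ\to\hat{G}$ is étale. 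Hence it induces an isomorphism on Lie algebras, and so an isomorphism of commuting schemes $\Comm_{\lie\widetilde{\hat{G}}}\simeq\Comm_{\hat{\mathfrak{g}}}$. We may therefore assume that $\hat{G}_{\mathrm{der}}$ is simply connected, so that a quasi-logarithm exists by \cite[Appendix C]{BKV}. (Very goodness of $\ell$ holds since $\ell$ exceeds the Coxeter number.)

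In characteristic $\ell$ with $F=\overline{\FF}_\ell$, the result is then exactly Theorem~\ref{thm: Lie CM} together with the subsequent corollary on normality. One only checks that the standing hypothesis on $\ell$ implies $\ell\nmid|W|$, which is noted after Assumption~\ref{assump: large ell}.

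In characteristic $0$ we spread out, as in Corollary~\ref{cor: char 0}. First take $F$ a number field, and spread $\hat{G}$ out to a split reductive $\hat{\mathcal{G}}$ over $\OFS$ with simply connected derived subgroup, enlarging $S$ so that every residue characteristic outside $S$ satisfies the hypotheses. Let $\Comm_{\hat{\mathfrak{g}}_{\mathcal{O}}}\subseteq\lie(\hat{\mathcal{G}})^2$ be the commuting scheme. By generic flatness, after enlarging $S$ it is flat over $\OFS$, and its formation commutes with base change, since it is a scheme-theoretic fiber of the commutator map. Each closed fiber is Cohen--Macaulay by the characteristic-$\ell$ case, so \cite[Tag 045J]{stacks-project} shows $\Comm_{\hat{\mathfrak{g}}_{\mathcal{O}}}$ is Cohen--Macaulay at every point lying over a closed point. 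Every point of the generic fiber $\Comm_{\hat{\mathfrak{g}}}$ specializes to such a point, and the Cohen--Macaulay locus is open and stable under generization. Hence the localizations of the generic fiber, which are localizations of Cohen--Macaulay local rings of the total space, are Cohen--Macaulay.

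Normality in characteristic $0$ needs a separate argument. The $\GG_m$-dilation argument of the normality corollary works verbatim once we have Cohen--Macaulayness and normality of the completion at $(0,0)$. A shorter route is to argue normality by the same spreading out. Normality of the fibers together with flatness gives normality of the total space near the closed fibers, via \cite[Tag 0C23]{stacks-project}-type results for flat maps with normal fibers. Normality of the generic fiber then follows by localization, as for Cohen--Macaulayness. For a general characteristic-$0$ field, we reduce first to $\overline{\QQ}$ and then to a number field by finite presentation, using \cite[Tag 045P, Tag 038O]{stacks-project}.

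The main obstacle is making the spreading-out step rigorous. It requires that the loci where the total space is flat and has Cohen--Macaulay, normal fibers are open, and that every point of the generic fiber specializes into them; we would cite the constructibility results from EGA IV / Stacks, as Corollary~\ref{cor: char 0} does.
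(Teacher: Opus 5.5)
Your proposal is correct. The reductions are what the paper's one-line proof (``as in Corollary~\ref{cor: char 0}'') amounts to: descent from $\overline{F}$, and passing to simply connected derived subgroup via the \'etale central isogeny, which is an isomorphism on Lie algebras. So are the characteristic-$\ell$ case (Theorem~\ref{thm: Lie CM} plus the Lie-algebra normality corollary) and the spreading-out proof of Cohen--Macaulayness in characteristic $0$. The one real difference is normality in characteristic $0$.

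The paper re-runs its structural normality argument in characteristic $0$, as in Corollary~\ref{cor: char 0}. For $\hat{\mathfrak{g}}$ this is your first option. The input you leave open there is normality of $\Comm^{\wedge}_{\hat{\mathfrak{g}},(0,0)}$. It comes from three things: normality of $\Comm_{\hat{G}}$ in characteristic $0$ (Corollary~\ref{cor: char 0}), excellence, and a characteristic-$0$ quasi-logarithm. An example of the latter is $g\mapsto \mathrm{pr}(\rho(g)-1)$, with $\mathrm{pr}\colon\mathfrak{gl}_N\to\hat{\mathfrak{g}}$ the trace-form projection.

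Your ``shorter route'' instead spreads normality out from the closed fibres. It uses no characteristic-$0$ geometry and would give the group case just as well. However, its justification must be chosen carefully. The usual ascent theorem for flat local homomorphisms (normal base and normal fibres imply normal, e.g.\ Matsumura, Thm.~23.9) needs normality of \emph{all} fibres. That includes the fibre over the generic point of $\Spec\mathcal{O}_F[S^{-1}]$, which is exactly what you want to prove, so invoking it is circular.

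Either of the following works instead.
\begin{enumerate}
\item Deformation of normality. If $B$ is Noetherian local, $\varpi\in\mathfrak{m}_B$ is a nonzerodivisor and $B/\varpi B$ is normal, then $B$ is normal, since $\operatorname{gr}_{(\varpi)}B\simeq (B/\varpi B)[T]$ is a normal domain. Apply this to $B=\cO_{X,y}$ with $y$ in a closed fibre and $\varpi$ a uniformizer at $v$, which is a nonzerodivisor by flatness. Then localize to the generic fibre.
\item Openness of the locus where a flat, finitely presented morphism has geometrically normal fibres (EGA IV$_3$, \S 12.1). This locus contains every closed fibre. Being open, it therefore contains every point of the generic fibre. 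This is what your final paragraph is gesturing at.
\end{enumerate}
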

\begin{proof}
    The proof is exactly the same as in corollary \ref{cor: char 0}.
\end{proof}

\appendix
\section{Trace of the big tilting sheaf}
Let $G$ be a split reductive group over a finite field $\FF_q$ with connected center. Let $\Lambda=\overline{\FF}_\ell$ or $\overline{\QQ}_\ell$ with $\ell$ good for $G$. Let $(B,T,e\colon U^{-}\to\GG_a)$ be a pinning of $G$. Let
\[\psi\colon U^-(\FF_q)\xrightarrow{e} \FF_q\to  \Lambda^\times\]
be a non-degenerate character. Recall the Gelfand--Graev representation
\[\Gamma_\psi\coloneqq \Ind_{U^-(\FF_q)}^{G(\FF_q)}\psi\in \Rep(G(\FF_q),\Lambda).\]

Let $(\Shv_\mon(U\backslash G/U,\Lambda),\star)$ be the monodromic Hecke category as in \cite[\S 4]{ZTame}. Consider the horocycle correspondence
\begin{equation}\label{eq: horocycle finite}
    U\backslash G/U\xleftarrow{p} G/\Ad_\sigma U\xrightarrow{q} G/\Ad_\sigma G=\BB G(\FF_q).
\end{equation}
Here $\Ad_\sigma(g)(h)=gh\sigma(g)^{-1}$. Let 
\[
F=\sigma_*\colon \Shv_\mon(U\backslash G/U,\Lambda)\xrightarrow{\simeq }\Shv_\mon(U\backslash G/U,\Lambda)
\]
denote the Frobenius automorphism.

Recall the following theorem on categorical traces.

\begin{thm}[{\cite[Theorem 4.97]{ZTame},\cite[Theorem 1.1.1]{Eteve2024FreeMonodromic}}]
    There is a natural equivalence
    \[\Tr(\Shv_\mon(U\backslash G/U,\Lambda),F)\simeq \Rep(G(\FF_q),\Lambda)\]
    such that the natural functor $\Shv_\mon(U\backslash G/U,\Lambda)\to \Tr(\Shv_\mon(U\backslash G/U,\Lambda),\sigma_*)$ is identified with the Deligne--Lusztig induction functor
    \[\Ch^\mon_{G,\phi}\coloneqq  q_*p^!\colon \Shv_\mon(U\backslash G/U,\Lambda)\to \Rep(G(\FF_q),\Lambda)\]
    in \eqref{eq: horocycle finite}.
\end{thm}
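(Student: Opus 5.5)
This statement is recalled from \cite[Theorem 4.97]{ZTame} and \cite[Theorem 1.1.1]{Eteve2024FreeMonodromic}. If I were to prove it rather than cite it, I would establish two things: the equivalence of the trace with $\Rep(G(\FF_q),\Lambda)$, and the identification of the canonical functor into the trace with $q_*p^!$.

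\textbf{Step 1: a trace formalism for the horocycle correspondence.} I would write $\Shv_\mon(U\backslash G/U,\Lambda)$ as the category of (free-)monodromic sheaves, i.e.\ as modules over the monodromic completion of $\Shv(T,\Lambda)$ inside $\Shv(U\backslash G/U)$. I would then apply the general trace formalism for sheaves on stacks with Frobenius (Gaitsgory--Kazhdan--Rozenblyum--Varshavsky, and \cite[\S 3]{ZTame}). This formalism identifies $\Tr(\Shv(Y\backslash X/Y),F)$ with sheaves on the Frobenius-twisted horocycle stack: one pulls back along $p$ and pushes forward along $q$ in \eqref{eq: horocycle finite}.

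The key input is that the horocycle map $q\colon G/\Ad_\sigma U\to G/\Ad_\sigma G$ becomes an equivalence on the relevant categories after the monodromic modification. By Lang's theorem, $G/\Ad_\sigma G\simeq \BB G(\FF_q)$, so sheaves there are $\Rep(G(\FF_q),\Lambda)$.

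\textbf{Step 2: full faithfulness.} I would check full faithfulness via compact generators. The images under $q_*p^!$ of the free-monodromic standard objects $\widetilde\Delta_w^\mon$ are Deligne--Lusztig type inductions, essentially $\Ind_{B(\FF_q)}^{G(\FF_q)}$ of the regular $T(\FF_q)$-representation twisted by $w$. Their Hom spaces are then computed by a Mackey/Bruhat decomposition. I would match these against the trace Hom formula, namely the $F$-twisted Hochschild homology of the Hecke algebra of these objects.

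\textbf{Step 3: essential surjectivity.} I would argue that every $G(\FF_q)$-representation lies in the subcategory generated by Deligne--Lusztig inductions $R_{T_w}$ of characters of all tori, including non-unipotent characters. Monodromic sheaves with all monodromies, not just unipotent ones, are needed precisely so that every character of every $T_w(\FF_q)$ is realized. Generation then follows, integrally, from the Broué--Michel / Bonnafé--Rouquier type result that Deligne--Lusztig (co)homology complexes generate $\Rep(G(\FF_q),\Lambda)$.

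\textbf{Compatibility with Deligne--Lusztig induction.} The identification of the canonical functor with $q_*p^!$ is then tautological from the construction. The functor $\Shv\to\Tr$ corresponds to $*$-pushforward of $!$-pullback along the correspondence, by the base-change description of the trace.

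\textbf{Main obstacle.} I expect the hardest step to be Step 2. The trace of a non-compactly-generated, monodromic-completed category requires careful control: one must check that free-monodromic objects are compact and that the twisted Hochschild homology matches the $G(\FF_q)$-Homs with $\overline{\FF}_\ell$ coefficients. This is exactly the technical core of \cite{Eteve2024FreeMonodromic}, and I would follow that paper's approach.
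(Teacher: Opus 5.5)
The paper does not prove this statement. It imports it from \cite[Theorem 4.97]{ZTame} and \cite[Theorem 1.1.1]{Eteve2024FreeMonodromic}, so deferring to those sources is exactly what the paper does. Your Step 3, generation by Deligne--Lusztig complexes in the style of Bonnaf\'e--Rouquier, is a sound route to essential surjectivity, and it is why all monodromies are needed. Your supplementary sketch, however, contains two genuine errors.

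The first is that the ``key input'' of Step 1 is false. By Lang's theorem $G/\Ad_\sigma U\simeq G(\FF_q)\backslash G/U$, so $q$ is the projection to $\BB G(\FF_q)$ with fibres $G/U$, and $q_*p^!$ is nowhere near an equivalence, monodromic or not. Already for $G=T$, the functor $q_*p^!=q_*\colon\Shv_\mon(T,\Lambda)\to\Rep(T(\FF_q),\Lambda)$ kills a Kummer local system $\mathcal{L}_\chi$ whenever $\chi^{q-1}\neq 1$: after Lang it is computed by $R\Gamma(T,\mathcal{L}_{\chi^{q-1}})=0$. The trace formalism lands in sheaves on $G/\Ad_\sigma G$, not on the horocycle stack. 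The assertion that the induced functor $\Tr(\bH,F)\to\Rep(G(\FF_q),\Lambda)$ is an equivalence is the theorem itself, not an input to it.

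The second error is that Step 2 misidentifies the images of the standard objects. For $w\neq e$, $q_*p^!(\widetilde\Delta^\mon_w)$ is not induced from $B(\FF_q)$. Up to shift and twist it is the Deligne--Lusztig complex $R\Gamma_c(Y(\dot w),\Lambda)$, where $Y(\dot w)=\{gU: g^{-1}\sigma(g)\in U\dot wU\}$, carrying a commuting action of the $\FF_q$-points of the twisted torus $T_w$. Only for $w=e$ does one get $\Ind_{U(\FF_q)}^{G(\FF_q)}\Lambda=\Ind_{B(\FF_q)}^{G(\FF_q)}\Lambda[T(\FF_q)]$. So the Homs cannot be computed by a Bruhat/Mackey decomposition over $B(\FF_q)$-double cosets. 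The classical Mackey formula for $R_{T_w}$ is an identity of virtual characters and only sees Euler characteristics. Your identification also contradicts Step 3. If all images were induced from $B(\FF_q)$, then $R\Hom(\Ind_{B(\FF_q)}^{G(\FF_q)}V,\pi)\simeq R\Hom_{T(\FF_q)}(V,\pi^{U(\FF_q)})=0$ for every cuspidal $\pi$, so the images could not generate once $G$ is not a torus.

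What should replace Step 2 is a structural argument, not a computation. The category $\bH=\Shv_\mon(U\backslash G/U,\Lambda)$ is semi-rigid, as the paper uses in Lemma~\ref{lemma: Xi vs Av}. Hence $\mathrm{tr}\colon\bH\to\Tr(\bH,F)$ has a continuous right adjoint, and $\Hom_{\Tr}(\mathrm{tr}\,x,\mathrm{tr}\,y)\simeq\Hom_{\bH}(x,\mathrm{tr}^R\mathrm{tr}\,y)$. Here $\mathrm{tr}^R\mathrm{tr}$ is an $F$-twisted averaging over $\bH$ built from convolution, so it is not determined by the endomorphism algebra of your generators alone. Full faithfulness then amounts to identifying this monad with $(q_*p^!)^R\circ q_*p^!$ by base change along the horocycle correspondence. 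Making that base change work for the free-monodromic category, which is not a category of sheaves on a stack, is what \cite{Eteve2024FreeMonodromic} supplies.
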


Let $\hat{G}$ be the dual group over $\Lambda$. Let $\hat{T}$ be the maximal torus of $\hat{G}$ and $\hat{C}=\hat{T}\git W$ be the Chevalley quotient. Let $[q]\colon \hat{G}\to\hat{G}$ be the $q$-power map, and let $[q]\colon\hat{C}\to \hat{C}$ be the induced map. Recall $\End_{G(\FF_q)}(\Gamma_\psi)\simeq \cO(\hat{C}^{[q]})$ by \cite[Theorem 1.4.2]{Eteve-Jordan}. We will define the cofree tilting sheaf $\Xi\in\Shv_\mon(U\backslash G/U,\Lambda)$ in Definition \ref{def: big tilting}. The main result of this appendix is the following theorem.

\begin{thm}\label{thm: Ch big tilting}
    There is a natural isomorphism
    \[
        \Ch^\mon_{G,\phi}(\Xi)\simeq \Gamma_\psi\otimes_{\cO(\hat{C})}\cO(\hat{T}).
    \]
\end{thm}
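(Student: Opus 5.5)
Recall that the big tilting sheaf $\Xi$ of Definition \ref{def: big tilting} is the cofree tilting object in $\Shv_\mon(U\backslash G/U,\Lambda)$ attached to the longest element $w_0$. It is the projective cover, equivalently the injective hull, of the monodromic skyscraper at the open cell. Concretely, I would use the standard presentation $\Xi\simeq \widetilde{\Delta}^\mon_{w_0}\star\widetilde{\nabla}^\mon_{w_0}$ up to shift, or characterize $\Xi$ through the averaging functor: $\Xi$ is obtained by applying $\Av_{U}$ to the Whittaker sheaf $\Av_{(U^-,\psi)}\widetilde{\Delta}^\mon_{w_0}$, i.e.
\[
\Xi\simeq \Av_{U!}\,\Av_{(U^-,\psi)*}\,\widetilde{\delta}^\mon,
\]
and its monodromic endomorphisms satisfy $\End(\Xi)\simeq \cO(\hat{T}\times_{\hat{C}}\hat{T})$ (Soergel's Endomorphismensatz in the free-monodromic setting). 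The plan is then to compute $\Ch^\mon_{G,\phi}(\Xi)=q_*p^!\Xi$ via the horocycle correspondence, and to transport the Whittaker averaging through the trace.

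First I would show that $\Ch^\mon_{G,\phi}$ intertwines $\Av_{(U^-,\psi)}$ with Whittaker coinvariants. On the trace side, the Whittaker model of the Hecke category is $\Shv((U^-,\psi)\backslash G/U)$, and its Frobenius trace is $\Rep$ with the functor to $\Rep(G(\FF_q))$ given by $\Ind_{U^-(\FF_q)}^{G(\FF_q)}\psi\otimes(-)$ on the monodromic torus part. Base changing the horocycle diagram along $U^-\backslash G\to U\backslash G$ (a Lang-torsor computation) then gives
\[
\Ch^\mon_{G,\phi}(\Xi)\simeq \Gamma_\psi\otimes_{\Lambda[T(\FF_q)]\text{-}\mathrm{free\ mon}} \Tr(\text{free monodromic unit on }T,\sigma_*),
\]
where the last factor is the Frobenius trace of the free monodromic sheaf on $T$. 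By \cite{Eteve2024FreeMonodromic}, the latter is $\cO(\hat{T}^{[q]})$ regarded as an $\cO(\hat{C}^{[q]})$-module. To identify the tensor product with $\Gamma_\psi\otimes_{\cO(\hat{C})}\cO(\hat{T})$, I would use that the $\cO(\hat{T}\times_{\hat{C}}\hat{T})$-action on $\Xi$ traces to the left and right monodromy. After taking Frobenius coinvariants, one $\hat{T}$-factor is absorbed into $\End(\Gamma_\psi)=\cO(\hat{C}^{[q]})$ through \cite[Theorem 1.4.2]{Eteve-Jordan}, while the other survives as the free factor $\cO(\hat{T})$ over $\cO(\hat{C})$. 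Finite freeness of $\cO(\hat T)$ over $\cO(\hat C)$, by Steinberg--Pittie, is what lets the tensor product be taken underived.

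The main obstacle is making the trace computation honest. Specifically, one must check that the trace of the averaging adjunction really produces the Gelfand--Graev induction and not some twist of it, and that the $\cO(\hat C)$-linear structures match; naturality of the final isomorphism depends on this. I would handle it by working on the level of Hom-spaces. For each $\widetilde{R}$ in the generating set of \cite[Theorem 4.91]{ZTame}, I would compute $\Hom(\Ch(\widetilde{\Til}_w),\Ch(\Xi))$ via the trace formula for Homs, which is the Frobenius-twisted Hom in the Hecke category. I would then compare this with $\Hom(\Ch(\widetilde{\Til}_w),\Gamma_\psi)\otimes_{\cO(\hat C)}\cO(\hat T)$, using \cite[Theorem 6.1.2]{Eteve-Jordan} for the $w=1$ case and the fact that $\Hom(\widetilde{\Til}_w,\Xi)$ is free of rank one over $\cO(\hat T)$. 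Because both sides are projective, matching these $\cO(\hat{C}^{[q]})$-linear Hom-functors on a set of generators would give the isomorphism.
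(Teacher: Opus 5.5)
\textbf{There is a genuine gap.} Your instinct to push the Whittaker averaging through the trace is right. But the computation you sketch does not reach the statement: the decisive identification is asserted, not proved, and the ingredient you feed into it is the wrong one.

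What you extract from the Frobenius trace on $T$ is $\cO(\hat{T}^{[q]})$ as an $\cO(\hat{C}^{[q]})$-module. The target, however, is $\Gamma_\psi\otimes_{\cO(\hat{C})}\cO(\hat{T})=\Gamma_\psi\otimes_{\cO(\hat{C}^{[q]})}\cO(\hat{T}\times_{\hat{C}}\hat{C}^{[q]})$. In general $\hat{T}^{[q]}$ is a proper closed subscheme of $\hat{T}\times_{\hat{C}}\hat{C}^{[q]}$: it misses the $t$ with $t^q=w(t)\neq t$, which come from non-split tori. So tensoring $\Gamma_\psi$ with the trace of the unit on $T$ cannot produce the target, which by \cite{Steinberg1975Pittie} is non-canonically $\Gamma_\psi^{\oplus|W|}$. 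Your sentence ``one $\hat{T}$-factor is absorbed, the other survives'' is the theorem itself, not a consequence of what precedes it.

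The missing idea is that, after using cyclicity, the surviving $\cO(\hat{T})$ is never Frobenius-traced at all. The paper argues as follows.
\begin{enumerate}
\item Since $\Xi=\Av^\psi(\widetilde{\Delta}^\psi)$ and $\Av^\psi$ is $\bH$-linear, $(-)\star\Xi\simeq\Av^\psi\circ\Av_\psi$. Hence $\Ch^\mon_{G,\phi}(\Xi)=\cl(\bH,a_\Xi)$ with $a_\Xi$ factoring through $\bM$.
\item Cyclicity of traces in the Morita category rewrites this as the class of $\bM$ with the $F$-twisted endomorphism $M\mapsto 1\otimes\Av_\psi F^{-1}\Av^\psi(M)$.
\item Under $\bM\simeq\Ind\Coh(R_{\hat{T}})$, the functor $\Av_\psi\circ\Av^\psi$ is $-\otimes^L_{\cO(\hat{C})}\cO(\hat{T})$. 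By $\bH$-linearity this reduces to $\Xi\star\widetilde{\Delta}^\psi\simeq\widetilde{\Delta}^\psi\otimes^L_{\cO(\hat{C})}\cO(\hat{T})$, which is the Soergel-bimodule computation of \cite[Theorem 3.5.28]{Eteve-Jordan}.
\item So the twisted endomorphism is $\can\otimes^L_{\cO(\hat{C})}\cO(\hat{T})$, and $\cl(\bM,\can)\simeq\Gamma_\psi$ gives the isomorphism, naturally and $\cO(\hat{C})$-linearly.
\end{enumerate}

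\textbf{The Hom-space fallback does not close the gap either.}
\begin{itemize}
\item $R\Hom(\Ch^\mon_{G,\phi}(X),\Ch^\mon_{G,\phi}(Y))$ is not a Frobenius-twisted Hom in $\bH$. It is $R\Hom_{\bH}(X,\Ch^R\Ch(Y))$, where $\Ch^R\circ\Ch$ is a twisted-conjugation coend over all of $\bH$. For $X=Y$ the unit, the left side is finite-dimensional (it is $\End$ of a compact object of $\Rep(G(\FF_q),\Lambda)$), while $\End_{\bH}$ of the unit is not.
\item The claim that $\Hom(\widetilde{\Til}_{\dot{w}},\Xi)$ is free of rank one over $\cO(\hat{T})$ already fails for $w=w_0$. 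Your own $\End(\Xi)\simeq\cO(\hat{T}\times_{\hat{C}}\hat{T})$ has rank $|W|$. In general the rank is the number of standard subquotients of $\widetilde{\Til}_{\dot{w}}$.
\item Isomorphisms $\Hom(P,A)\simeq\Hom(P,B)$ for each generator $P$ separately do not give $A\simeq B$ unless they are natural in $P$. That presupposes a map between the two objects.
\item Projectivity plus a Cartan-matrix count could at best give a non-canonical isomorphism of $G(\FF_q)$-representations. The paper needs compatibility with the $\cO(\hat{C}^{[q]})$-actions, since it base changes along $\cO(\hat{C}^{[q]}_u)\to\cO(\hat{C}_a)$ in Corollary \ref{cor: Comm^q C_a flat}. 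That compatibility is exactly what the trace-level argument provides.
\end{itemize}
Separately, $\widetilde{\Delta}^\mon_{w_0}\star\widetilde{\nabla}^\mon_{w_0}$ is the monoidal unit, not $\Xi$.
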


\subsection{The Whittaker category}
We define the category $\Shv_\mon(U\backslash G/(U^-,\psi))$ of $(U^-,\psi)$-equivariant sheaves as in \cite[\S4.2.4]{ZTame}. The monodromic Hecke category $\Shv_\mon(U\backslash G/U,\Lambda)$ acts on this category by left convolution. 

Let $\widetilde{U^-}\to U^-$ denote the $\FF_q$-cover defined by pulling back the Artin--Schreier cover of $\GG_a$ along $e\colon U^-\to\GG_a$. Then $\Shv_\mon(U\backslash G/(U^-,\psi),\Lambda)$ is a direct summand of $\Shv_\mon(U\backslash G/\widetilde{U^-},\Lambda)$ as in \cite[(4.34)]{ZTame}.
Consider the projection map
\[\widetilde\pr\colon U\backslash UTU^-/\widetilde{U^-}\to T\times U^-/\widetilde{U^-}\simeq T\times \BB\FF_p.\]
We define the functors
\[\Delta^\psi= j_!\widetilde\pr^!((-)\boxtimes \psi)\colon\Shv_\mon(T,\Lambda)\to \Shv_\mon(U\backslash G/(U^-,\psi),\Lambda)
\]
\[\nabla^\psi=j_*\widetilde\pr^!((-)\boxtimes \psi)\colon \Shv_\mon(T,\Lambda)\to \Shv_\mon(U\backslash G/(U^-,\psi),\Lambda) \] 
where $j\colon U\backslash UTU^-/\widetilde{U^-}\hookrightarrow U\backslash G/\widetilde{U^-}$ is the open embedding of the big cell.
By \cite[Lemma 4.55]{ZTame}, we have the following lemma.

\begin{lemma}\label{lemma-finite-Whittaker-equivalence}
    The functor
    \begin{equation}\label{eq: Delta psi}
        \nabla^\psi\simeq \Delta^\psi\colon \Shv_\mon(T,\Lambda)\xrightarrow{\sim}\Shv(U\backslash G/(U^-,\psi),\Lambda)
    \end{equation}
    is an equivalence of categories.
\end{lemma}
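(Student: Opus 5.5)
This is the finite Whittaker equivalence $\nabla^\psi\simeq\Delta^\psi\colon\Shv_\mon(T,\Lambda)\xrightarrow{\sim}\Shv_\mon(U\backslash G/(U^-,\psi),\Lambda)$. The plan has two parts: first show that $j_!\to j_*$ is an isomorphism on objects coming from the big cell, and then show that restriction to the big cell is an equivalence with inverse $\widetilde\pr^!((-)\boxtimes\psi)$.

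\textbf{Step 1: $j_!\simeq j_*$.} Stratify $U\backslash G/\widetilde{U^-}$ by the Bruhat cells $U\backslash U\dot wTU^-/\widetilde{U^-}$ for $w\in W$. The big cell is the stratum $w=e$. The main point is that every $(U^-,\psi)$-equivariant monodromic sheaf vanishes on the strata with $w\neq e$.

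To see this, fix $w\neq e$ and a point $\dot wt$. Its stabilizer in $U\times U^-$ contains $U^-\cap (\dot wt)^{-1}U(\dot wt)$. Because $w\neq e$, this group contains a simple negative root subgroup $U_{-\alpha}$: $w\neq e$ implies $w(-\alpha)>0$ for some simple $\alpha$. Since $e$ is non-degenerate, $\psi\circ e$ restricts nontrivially to $U_{-\alpha}$. An Artin--Schreier sheaf $\psi$ that is nontrivial on a connected unipotent stabilizer forces the costalks of any equivariant object to vanish there. Applying this stratum by stratum, every object is $!$- and $*$-extended from the big cell. Hence for $\mathcal{F}\in\Shv_\mon(T,\Lambda)$ the cone of $\Delta^\psi(\mathcal{F})\to\nabla^\psi(\mathcal{F})$ is an equivariant object supported on the closed complement, so it is zero.

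\textbf{Step 2: the big cell.} The action of $U\times U^-$ on $UTU^-$ is free with quotient $T$. Consequently the twisted equivariant category on $U\backslash UTU^-/\widetilde{U^-}$, with the $\psi$-isotypic condition imposed along $\BB\FF_p$, is identified with $\Shv(T,\Lambda)$ via $\widetilde\pr^!((-)\boxtimes\psi)$. The monodromicity condition for the left $T$-action on the source corresponds to monodromicity on $T$, so we get $\Shv_\mon(T,\Lambda)$. The one point to check is that the left $T$-action on $U\backslash G$, restricted to the big cell, matches the action of $T$ on itself. This holds because $T$ normalizes $U$.

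\textbf{Step 3: conclusion.} By Step 1, restriction $j^*$ is conservative on the Whittaker category, and $j_!$ is inverse to it. Combining with Step 2, $\Delta^\psi$ is an equivalence, and $\nabla^\psi\simeq\Delta^\psi$ by Step 1.

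The main obstacle is the vanishing on the small strata in the monodromic setting, where the category is defined by completion or pro-unipotent monodromy. I would prove the vanishing for constructible equivariant sheaves first, then extend it to the monodromic category by noting that it is generated under colimits by objects to which the stalkwise argument applies, namely the $!$-pullbacks to $U\backslash G/\widetilde{U^-}$. This is essentially the argument of \cite[Lemma 4.55]{ZTame}, which can also simply be cited.
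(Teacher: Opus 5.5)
Your argument is correct. It is the standard proof behind the paper's one-line justification, which simply cites \cite[Lemma 4.55]{ZTame}: non-degeneracy of $\psi\circ e$ on $U_{-\alpha}$ kills the Whittaker category on every non-open Bruhat stratum, which gives $j_!\simeq j_*$, and restriction to the big cell $UTU^-\simeq U\times T\times U^-$ is then an equivalence. You also rightly read the target as $\Shv_\mon(U\backslash G/(U^-,\psi),\Lambda)$, consistent with the definitions of $\Delta^\psi,\nabla^\psi$ and with \eqref{eq: Ch psi}; the same argument applied to the non-monodromic category would produce $\Shv(T,\Lambda)$ rather than $\Shv_\mon(T,\Lambda)$.
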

Let $\widetilde{\Ch}\in \Shv_\mon(T,\Lambda)$ denote the cofree tilting sheaf in \cite[(4.6)]{ZTame}. Denote $\widetilde{\Delta}^\psi\coloneqq \Delta^\psi(\widetilde{\Ch})$. Recall that there is an equivalence of symmetric monoidal categories
\begin{equation}\label{eq: Ch}
    \Ch\colon \Ind\Coh(R_{\hat{T}})\xrightarrow{\sim} \Shv_\mon(T,\Lambda)
\end{equation}
where $R_{\hat{T}}$ is the moduli space of strongly continuous homomorphisms $\widehat\ZZ^p(1)\to \hat{T}$ over $\Lambda$ by \cite[Proposition 4.32]{ZTame}. After fixing a topological generator $\tau\in \widehat\ZZ^p(1)$, we can identify $R_{\hat{T}}$ with the union of completions of $\hat{T}$ along $\Lambda$-points of prime-to-$p$ order. By \eqref{eq: Delta psi} and \eqref{eq: Ch}, there is an equivalence of categories
\begin{equation}\label{eq: Ch psi}
    \Ch^\psi\colon \Ind\Coh(R_{\hat{T}})\simeq \Shv_\mon(U\backslash G/(U^-,\psi),\Lambda).
\end{equation}
In particular, the category $\Shv_\mon(T,\Lambda)$, and hence $\Shv_\mon(U\backslash G/(U^-,\psi),\Lambda)$ are naturally $\cO(\hat{T})$-linear. By \cite[Lemma 3.5.9]{Eteve-Jordan}, the monoidal category $\Shv_\mon(U\backslash G/U,\Lambda)$ is naturally linear over $\cO(\hat{C})$, and $\Shv_\mon(U\backslash G/(U^-,\psi),\Lambda)$ is naturally a $\cO(\hat{C})$-linear module category over $\Shv_\mon(U\backslash G/U,\Lambda)$.

Define the functor
\[\Av_\psi\colon \Shv_\mon(U\backslash G/U,\Lambda)\to \Shv_\mon(U\backslash G/(U^-,\psi),\Lambda)\]
by $\Av_\psi(\cF)\coloneqq \cF\star \widetilde{\Delta}^\psi$. Let $\Av^\psi$ denote the right adjoint of $\Av_\psi$. 

\begin{defn}\label{def: big tilting}
    We define the \emph{big tilting sheaf} $\Xi\coloneqq \Av^\psi(\widetilde\Delta^\psi)\in \Shv_\mon(U\backslash G/U,\Lambda)$.
\end{defn}

By \cite[Lemma 4.4.1]{BY-Koszul-duality}\cite{DLYZ2025}, the object $\Xi$ is isomorphic to the cofree monodromic tilting sheaf $\mathrm{Til}_{\dot{w}_0}^\mon\in \Shv_\mon(U\backslash G/U ,\Lambda)$ associated with the longest element $w_0\in W$ in \cite[Proposition 4.50]{ZTame}.

\begin{lemma}\label{lemma: Xi vs Av}
    There is a natural isomorphism of functors
    \[
    (-)\star\Xi\simeq \Av^\psi\circ\Av_\psi\colon \Shv_\mon(U\backslash G/U,\Lambda)\to \Shv_\mon(U\backslash G/U,\Lambda)
    \]
\end{lemma}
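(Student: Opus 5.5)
We need to prove $(-)\star\Xi\simeq \Av^\psi\circ\Av_\psi$, where $\Av_\psi(\cF)=\cF\star\widetilde\Delta^\psi$ and $\Xi=\Av^\psi(\widetilde\Delta^\psi)$. The plan is to show that $\Av^\psi$ is linear for the \emph{left} convolution action of $\Shv_\mon(U\backslash G/U,\Lambda)$. Granting this, for any $\cF$ we get
\[
\Av^\psi(\Av_\psi(\cF))=\Av^\psi(\cF\star\widetilde\Delta^\psi)\simeq \cF\star\Av^\psi(\widetilde\Delta^\psi)=\cF\star\Xi,
\]
and naturality in $\cF$ comes from the naturality of the linearity structure. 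So everything reduces to producing the canonical comparison map $\cF\star\Av^\psi(\cG)\to\Av^\psi(\cF\star\cG)$ and proving it is an isomorphism.

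The functor $\Av_\psi$ is left $\Shv_\mon(U\backslash G/U)$-linear by construction, since it is right convolution with $\widetilde\Delta^\psi$ and convolution is associative. Its right adjoint $\Av^\psi$ is therefore automatically lax linear: the map $\cF\star\Av^\psi(\cG)\to\Av^\psi(\cF\star\cG)$ is adjoint to
\[
\Av_\psi(\cF\star\Av^\psi\cG)\simeq\cF\star\Av_\psi\Av^\psi\cG\to\cF\star\cG,
\]
where the last arrow is the counit. To upgrade lax to strict linearity I would use rigidity-type input. The cleanest route is to identify $\Av^\psi$ with convolution by an explicit kernel, $\Av^\psi(\cG)\simeq\cG\star\widetilde\nabla^{\psi,\vee}$, where $\widetilde\nabla^{\psi,\vee}$ is a $(U^-,\psi)\backslash G/U$ object (the $*$-extension of the inverse character from the big cell, with a monodromy shift). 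Once this is in place, associativity of convolution gives strict linearity directly. To obtain the identification, I would show that $(-)\star\widetilde\Delta^\psi$ and $(-)\star\widetilde\nabla^{\psi,\vee}$ are adjoint. Equivalently, $\widetilde\Delta^\psi$ should be left dual to $\widetilde\nabla^{\psi,\vee}$ in the bicategory of monodromic kernels. The evaluation and coevaluation maps come from the big-cell computation in Lemma \ref{lemma-finite-Whittaker-equivalence}: the composite $\widetilde\nabla^{\psi,\vee}\star\widetilde\Delta^\psi$ lives on $(U^-,\psi)\backslash G/(U^-,\psi)$ and is computed through $T$ via the equivalence $\Delta^\psi\simeq\nabla^\psi$.

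An alternative route that avoids kernels also works. Both $\Av_\psi$ and $\Av^\psi$ are continuous: $\Av^\psi$ preserves colimits because $\widetilde\Delta^\psi$ is compact, which holds since $\widetilde\Ch$ is compact in $\Shv_\mon(T)$. Both functors are also $\cO(\hat C)$-linear. Moreover, $\Shv_\mon(U\backslash G/U)$ is generated under colimits by the objects $\widetilde\Delta_w^\mon$ (or by the free-monodromic tilting objects). So it suffices to check that the lax map is an isomorphism for $\cF=\widetilde\Delta_{w}^{\mon}$. These objects are invertible under convolution, with inverse $\widetilde\nabla_{w^{-1}}^\mon$. For invertible $\cF$, lax linearity is automatically strict: the functor $\cF\star(-)$ has adjoint $\cF^{-1}\star(-)$ on both module categories, and the two mates are mutually inverse. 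Passing to colimits then finishes the argument. I would take this as the main argument, since it uses only invertibility of standard objects and continuity.

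The main obstacle is continuity, or compactness, of $\Av^\psi$, together with the generation statement in the free-monodromic setting with $\overline{\FF}_\ell$ coefficients. Generation by the $\widetilde\Delta_w^\mon$ uses the stratification by Bruhat cells and the fact that $\Shv_\mon(T)$ is generated by $\widetilde\Ch$ under the $\cO(\hat T)$-action; I would cite \cite[\S4]{ZTame} for both points.
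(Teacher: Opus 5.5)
Your reduction is the same as the paper's. Once $\Av^\psi$ is known to be continuous and strictly left linear, $\Av^\psi(\cF\star\widetilde\Delta^\psi)\simeq\cF\star\Av^\psi(\widetilde\Delta^\psi)=\cF\star\Xi$. The paper gets strict linearity in one line from semi-rigidity of $\Shv_\mon(U\backslash G/U,\Lambda)$: compact objects are dualizable and generate, so the lax structure on a continuous right adjoint of a linear functor is strict. Your main route checks the lax map on the invertible objects $\widetilde\Delta^\mon_w$ and then passes to colimits. This is a legitimate variant of the same principle, since invertible objects are dualizable. It replaces semi-rigidity by two facts: the standard objects are invertible, and they generate as a localizing subcategory (which holds via the Bruhat stratification). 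The kernel route you sketch first is unnecessary.

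The step that fails as written is the continuity of $\Av^\psi$, which both routes need.

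\begin{itemize}
\item The object $\widetilde\Ch$ is the \emph{cofree} tilting sheaf. Under $\Ch$ it corresponds to $\omega_{R_{\hat T}}$, the monoidal unit of $\Ind\Coh(R_{\hat T})$, as in the appendix's computation of the action on $\omega_{R_{\hat T}}$.
\item This object is not compact. $R_{\hat T}$ has infinitely many components, and even on a single formal component $\omega$ is a local-cohomology module, not a coherent one. So $\widetilde\Delta^\psi$ is not compact.
\item In any case, compactness of the kernel is not the relevant criterion. $\Av^\psi$ is continuous if and only if $\Av_\psi$ sends compact objects of $\Shv_\mon(U\backslash G/U,\Lambda)$ to compact objects.
\end{itemize}

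The latter is what the paper asserts. It holds because $\widetilde\Delta^\psi$ behaves like a unit: under $\Ch^\psi$ it is $\omega_{R_{\hat T}}$. Convolving a compact generator against it (for instance a standard object with compact monodromic coefficient) yields $\Delta^\psi$ of a compact object of $\Shv_\mon(T,\Lambda)$, up to twist and shift.

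With this justification substituted, your argument goes through. Your generators $\widetilde\Delta^\mon_w$ are likewise non-compact. That is harmless once continuity is established, since the class of $\cF$ on which the lax map is an isomorphism is then closed under colimits and shifts.
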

\begin{proof}
    It is easy to see that $\Av_\psi$ preserves compact objects, and hence $\Av^\psi$ is continuous. As $\Shv_\mon(U\backslash G/U,\Lambda)$ is a semi-rigid monoidal category, the functor $\Av^\psi$ is also $\Shv_\mon(U\backslash G/U,\Lambda)$-linear. The lemma follows.
\end{proof}

\begin{lemma}\label{lemma: O(T) O(C) vs Av}
    The following diagram commutes:
    \[\begin{tikzcd}
        \Ind\Coh(R_{\hat{T}}) \ar[r,"-\otimes^L_{\cO(\hat{C})}\cO(\hat{T})"]\ar[d,"\Ch^\psi"swap,"\simeq"] & \Ind\Coh(R_{\hat{T}}) \ar[d,"\Ch^\psi","\simeq"swap] \\
        \Shv_\mon(U\backslash G/(U^-,\psi),\Lambda) \ar[r,"\Av_\psi\circ\Av^\psi"] & \Shv_\mon(U\backslash G/(U^-,\psi),\Lambda).
    \end{tikzcd}\]
\end{lemma}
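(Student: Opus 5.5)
We need to prove that, under the equivalence $\Ch^\psi$, the endofunctor $\Av_\psi\circ\Av^\psi$ of the Whittaker category becomes $-\otimes^L_{\cO(\hat{C})}\cO(\hat{T})$ on $\Ind\Coh(R_{\hat{T}})$. The plan is to transport the adjunction $\Av_\psi\dashv\Av^\psi$ to the spectral side and identify the resulting monad/comonad.

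\textbf{Step 1: reduce to the image of a single object.} The Whittaker category $\Shv_\mon(U\backslash G/(U^-,\psi),\Lambda)$ is a module category over $\Shv_\mon(U\backslash G/U,\Lambda)$. By Lemma \ref{lemma-finite-Whittaker-equivalence} it is generated by $\widetilde\Delta^\psi$, which corresponds under $\Ch^\psi$ to $\cO_{R_{\hat{T}}}$ (the image of $\widetilde{\Ch}$). The argument of Lemma \ref{lemma: Xi vs Av} shows $\Av^\psi$ is continuous and $\Shv_\mon(U\backslash G/U,\Lambda)$-linear. Hence $\Av_\psi\circ\Av^\psi$ is $\Shv_\mon(U\backslash G/U,\Lambda)$-linear, and in particular $\cO(\hat{C})$-linear.

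\textbf{Step 2: use the $\cO(\hat{T})$-linearity.} The Whittaker category is also $\cO(\hat{T})$-linear, via the right $T$-monodromy on $\Shv_\mon(T,\Lambda)$ transported by $\Delta^\psi$. Both $\Av_\psi$ and $\Av^\psi$ intertwine the $\cO(\hat{T})$-action coming from the left $T$-monodromy on $U\backslash G/U$. The $\cO(\hat{T})$-action on the Whittaker side factors through this left monodromy, since $(U^-,\psi)$-equivariance kills the right torus (compare \cite[\S 3.5]{Eteve-Jordan}). It follows that $\Av_\psi\circ\Av^\psi$ is an $\cO(\hat{T})$-linear continuous endofunctor of $\Ind\Coh(R_{\hat{T}})$. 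It is therefore given by tensoring with some $\cO(\hat{T})$-bimodule object $K$, namely
\[
K=\Ch^{\psi,-1}\bigl(\Av_\psi\Av^\psi\widetilde\Delta^\psi\bigr)=\Ch^{\psi,-1}\bigl(\Xi\star\widetilde\Delta^\psi\bigr).
\]

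\textbf{Step 3: compute $K$.} This is the main obstacle. The computation uses \cite[Lemma 4.4.1]{BY-Koszul-duality}, which identifies $\Xi$ with the big free-monodromic tilting $\Til^\mon_{\dot w_0}$. I would first compute the endomorphism ring $\End(\Xi)=\End(\Av^\psi\widetilde\Delta^\psi)\simeq\End(\Av_\psi\Av^\psi\widetilde\Delta^\psi,\widetilde\Delta^\psi)$, where the second isomorphism holds by adjunction. The known statement in the monodromic Soergel theory (Bezrukavnikov--Yun, extended to $\overline{\FF}_\ell$ for good $\ell$ in \cite{DLYZ2025}) is that $\End(\Xi)\simeq\cO(\hat{T}\times_{\hat{C}}\hat{T})$ after completion, with the two $\cO(\hat{T})$ factors given by the left and right monodromy. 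Moreover, $\Av_\psi(\Xi)=\Xi\star\widetilde\Delta^\psi$ is identified with $\Delta^\psi$ of $\cO(\hat{T}\times_{\hat{C}}\hat{T})$, viewed as a module over the second factor. This uses $\Til_{\dot w_0}\star\Delta^\psi\simeq$ a free Whittaker object of rank $|W|$, whose realization as $\cO(\hat{T})\otimes_{\cO(\hat{C})}\cO(\hat{T})$ follows from the compatibility of the $\cO(\hat{C})$-linearities. Since $\cO(\hat{T})$ is finite free over $\cO(\hat{C})$ \cite{Steinberg1975Pittie}, the derived tensor product is underived, and so $K\simeq\cO(\hat{T})\otimes_{\cO(\hat{C})}\cO(\hat{T})$ as a bimodule.

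\textbf{Step 4: conclude and check naturality.} Tensoring with $K$ over $\cO(\hat{T})$ gives exactly $-\otimes^L_{\cO(\hat{C})}\cO(\hat{T})$, which proves the diagram commutes. Finally, the isomorphism must be compatible with the counit, so that it is natural and not merely objectwise. To secure this, I would check that the map $\cO(\hat{T})\otimes_{\cO(\hat{C})}\cO(\hat{T})\to\cO(\hat{T})$ induced by the counit $\Av_\psi\Av^\psi\to\mathrm{id}$ is multiplication. This reduces to the statement that the counit on $\widetilde\Delta^\psi$ corresponds to the canonical map $\Til_{\dot w_0}\star\widetilde\Delta^\psi\to\widetilde\Delta^\psi$, which is $\cO(\hat{T})$-bilinear.
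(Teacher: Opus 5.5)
Your proposal is correct and follows essentially the paper's route: $\bH$-linearity of $\Av_\psi\circ\Av^\psi$ reduces the lemma to the single identification $\Xi\star\widetilde\Delta^\psi\simeq\widetilde\Delta^\psi\otimes^L_{\cO(\hat{C})}\cO(\hat{T})$. The paper obtains that identification directly from \cite[Theorem 3.5.28]{Eteve-Jordan}, where $\Xi$ corresponds to $\omega_{R_{\hat{T}}\times_{R_{\hat{T}}\git W}R_{\hat{T}}}$ and its action on $\omega_{R_{\hat{T}}}$ is an explicit pull--push; this is cleaner than your appeal to $\End(\Xi)$ and to ``compatibility of the $\cO(\hat{C})$-linearities'', and your Step 4 is not needed.
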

\begin{proof}
    As in Lemma \ref{lemma: Xi vs Av}, the functor $\Av_\psi\circ\Av^\psi$ is $\Shv_\mon(U\backslash G/U,\Lambda)$-linear. Therefore it suffices to compute
    \[\Av_\psi \circ \Av^\psi(\widetilde{\Delta}^\psi)\simeq \Av_\psi(\Xi)\simeq \Xi\star\widetilde{\Delta}^\psi.\]
    It suffices to show $\Xi\star\widetilde{\Delta}^\psi\simeq \widetilde{\Delta}^\psi\otimes_{\cO(\hat{C})}^L\cO(\hat{T})$. This follows from \cite[Theorem 3.5.28]{Eteve-Jordan}. Indeed, the object $\Xi$ corresponds to the object $\omega_{R_{\hat{T}}\times_{R_{\hat{T}}\git W}R_{\hat{T}}}$ in the category of Soergel bimodules, and the action on $\omega_{R_{\hat{T}}}\in \Ind\Coh(R_{\hat{T}})$ is given by 
    \[(p_2)_*\delta^!(\omega_{R_{\hat{T}}\times_{R_{\hat{T}}\git W}R_{\hat{T}}}\boxtimes\omega_{R_{\hat{T}}})\simeq (p_2)_*\omega_{R_{\hat{T}}\times_{R_{\hat{T}}\git W}R_{\hat{T}}}\simeq\omega_{R_{\hat{T}}}\otimes^L_{\cO(\hat{C})}\cO(\hat{T})\]
    for
    \[
        (R_{\hat{T}}\times_{R_{\hat{T}}\git W} R_{\hat{T}})\times R_{\hat{T}}\xleftarrow{\delta}R_{\hat{T}}\times_{R_{\hat{T}}\git W} R_{\hat{T}}\xrightarrow{p_2} R_{\hat{T}}.
    \]
\end{proof}

\subsection{Proof of Theorem \ref{thm: Ch big tilting}}
We refer to \cite{GKRV-toy-model} and \cite{ZTame} for the definitions and properties of categorical traces.
Write $\bH=\Shv_\mon(U\backslash G/U,\Lambda)$ and $\bM=\Shv_\mon(U\backslash G/(U^-,\psi),\Lambda)$ for simplicity.
The $\bH$-module $\bM$ is left dualizable by \cite[Corollary 8.78]{ZTame}. Moreover, there is a natural $\bH$-linear functor
\[
    \can\colon \bM\to \bH^F\otimes_{\bH}\bM
\]
defined by $M\mapsto 1\otimes F^{-1}(M)$ for $F=\sigma_*$. Here $\bH^F$ is the $\bH$-bimodule with the right $\bH$-action given by $F$. By \cite[(7.61)]{ZTame}, this defines an object
\[
    \cl(\bM,\can)\in \Tr(\bH,F)=\Rep(G(\FF_q),\Lambda).
\]
By the argument in the proof of \cite[Theorem 4.135]{ZTame}, there is a canonical isomorphism
\begin{equation}
    \cl(\bM,\can)\simeq \Gamma_\psi.
\end{equation}
Consider the $\bH$-module map
\begin{equation}\label{eq: a Xi}
    a_\Xi\colon \bH\to \bH^F\otimes_\bH\bH\simeq\bH^F
\end{equation}
defined by $a_\Xi(X)=X\star\Xi$. We obtain the object
\begin{equation}\label{eq: Ch Xi as trace}
    \Ch^\mon_{G,\phi}(\Xi)\simeq \cl(\bH,a_\Xi)\in \Tr(\bH,F)=\Rep(G(\FF_q),\Lambda).
\end{equation}
We interpret categorical traces as traces in the Morita category $\Morita_\Lambda$ defined in \cite[\S3.6]{GKRV-toy-model}. Objects in $\Morita_\Lambda$ are identified with $\Alg(\PR^L_\Lambda)$, and morphisms in $\Morita_\Lambda$ are given by bimodules. By Lemma \ref{lemma: Xi vs Av}, we can write \eqref{eq: a Xi} as a composition of left $\bH$-module homomorphisms
\[
    \bH\xrightarrow{1\otimes\Av_\psi}\bH\otimes_\bH\bM \xrightarrow{1\otimes\Av^\psi\otimes 1} \bH\otimes_\bH\bH^F\otimes_\bH\bH.
\]
Therefore the object \eqref{eq: Ch Xi as trace} is induced by the 2-commutative diagram
\[\begin{tikzcd}[column sep=huge]
    \underline{D(\Lambda)} \ar[d,equal]\ar[r,"\bH"] & \underline{\bH} \ar[d,"\bH^F"] \\
    \underline{D(\Lambda)} \ar[d,equal]\ar[r,"\bM"]\ar[ru,Rightarrow,"\Av^\psi"] & \underline{\bH} \ar[d,equal] \\
    \underline{D(\Lambda)} \ar[r,"\bH"]\ar[ru,Rightarrow,"\Av_\psi"] & \underline{\bH}
\end{tikzcd}\]
in $\Morita_\Lambda$. By cyclicity of trace (\cite[(3.5)]{GKRV-toy-model}), we see that \eqref{eq: Ch Xi as trace} is also induced by the 2-commutative diagram
\[\begin{tikzcd}[column sep=huge]
    \underline{D(\Lambda)} \ar[d,equal]\ar[r,"\bM"] & \underline{\bH} \ar[d,equal] \\
    \underline{D(\Lambda)} \ar[d,equal]\ar[r,"\bH"]\ar[ru,Rightarrow,"\Av_\psi"] & \underline{\bH} \ar[d,"\bH^F"] \\
    \underline{D(\Lambda)} \ar[r,"\bM"]\ar[ru,Rightarrow,"\Av^\psi"] & \underline{\bH}.
\end{tikzcd}\]
The composition
\[
    \bM\xrightarrow{\Av^\psi\otimes 1}\bH^F\otimes_\bH\bH\xrightarrow{1\otimes1\otimes\Av_\psi}\bH^F\otimes_\bH\bH\otimes_\bH \bM\simeq \bH^F\otimes_\bH\bM
\]
is computed by
\[
    M\mapsto 1\otimes \Av_\psi\circ F^{-1}\circ \Av^\psi(M)\simeq\can(M)\otimes_{\cO(\hat{C})}^L\cO(\hat{T})
\]
by Lemma \ref{lemma: O(T) O(C) vs Av}. Therefore we have
\[
    \Ch^\mon_{G,\phi}(\Xi)\simeq \cl(\bM,\can\otimes^L_{\cO(\hat{C})}\cO(\hat{T}))\simeq \cl(\bM,\can)\otimes^L_{\cO(\hat{C})}\cO(\hat{T})\simeq \Gamma_\psi\otimes^L_{\cO(\hat{C})}\cO(\hat{T}).
\]
As $\cO(\hat{T})$ is finite free over $\cO(\hat{C})$ by \cite{Steinberg1975Pittie}, we have $\Gamma_\psi\otimes^L_{\cO(\hat{C})}\cO(\hat{T})\simeq \Gamma_\psi\otimes_{\cO(\hat{C})}\cO(\hat{T})$. This proves the theorem.

\bibliographystyle{amsalpha}
\bibliography{refs}
\end{document}